\newtheorem{Th}{Theorem}
\newtheorem{Prop}{Proposition}[section]
\newtheorem{lem}{Lemma}
\newtheorem{factor}{Factor}
\newtheorem{rem}{Remark}
\newtheorem{Cor}{Corollary}
\newtheorem*{conj}{Conjecture}
\newtheorem*{que}{Question}
\newtheorem*{thank}{\ \ \ Acknowledgment}
\def\scalar(#1,#2){(#1\mid#2)}
\renewcommand{\hat}{\widehat}
\newcommand{\ca}{\mathcal{A}}
\newcommand{\cb}{\mathcal{B}}
\newcommand{\bmu}{\bm \mu}
\newcommand{\R}{{\mathbb{R}}}
\newcommand{\Pro}{{\mathbb{P}}}
\newcommand{\T}{{\mathbb{T}}}
\newcommand{\C}{{\mathbb{C}}}
\newcommand{\Z}{{\mathbb{Z}}}
\newcommand{\N}{{\mathbb{N}}}
\newcommand{\E}{{\mathbb{E}}}
\newcommand{\mob}{\boldsymbol{\mu}}
\newcommand{\lamob}{\boldsymbol{\lambda}}
\newcommand{\bnu}{\boldsymbol{\nu}}
\newcommand{\vMan}{\boldsymbol{\Lambda}}
\newcommand{\tend}[3][]{\xrightarrow[#2\to#3]{#1}}
\newcommand{\ds}{\displaystyle}
\newcommand*{\Resize}[2]{\resizebox{#1}{!}{$#2$}}
\newcommand*{\Sup}{\Resize{0.7cm}{sup}}
\newcommand{\TODOH}[1]{{#1}}
\newcommand*{\QEDA}{\hfill\ensuremath{\blacksquare}}
\title[A cubic nonconventional ergodic average]{A cubic nonconventional ergodic average with multiplicative or Mangoldt weights}
\author{E. H. El Abdalaoui}
\address{University
of Rouen Normandy, Department of Mathematics, LMRS, UMR 60 85, Avenue de l'Universit\'e, BP.12, 76801
Saint Etienne du Rouvray - France}
\email{elhoucein.elabdalaoui@univ-rouen.fr }
\author{Xiangdong Ye}
\address{Wu Wen-Tsun Key Laboratory of Mathematics, USTC, Chinese Academy of Sciences, Department of Mathematics, University
of science and technology of China, Hefei, Anhui, 230026- China}
\thanks{The second author is supported by NNSF of China (11371339  and 11431012).\\
%$^{1}$ In a forthcoming version, we establish that the cubic nonconventional ergodic average of any order with M\"{o}bius
%and Liouville weight converge almost surely to zero.
}
\email{yexd@ustc.edu.cn}
\keywords{A nonconventional ergodic theorem along cube, nonconventional averages, Ces\`{a}ro mean,
moving average, M\"{o}bius, Liouville and von Mangoldt functions.}
\subjclass[2010]{28D15 (Primary), 05D10, 11B37, 37A45 (Secondary)}
\begin{document}
%\date{March 10, 2015}
%\date{March 24, 2015}
\date{\today}

{\renewcommand\abstractname{Abstract}
\begin{abstract}
We show that the cubic nonconventional ergodic averages of any order with a bounded multiplicative function
weight converge almost surely to zero provided that the multiplicative function satisfies a strong
Daboussi-Delange condition. We further obtain that the Ces\`{a}ro mean of the self-correlations
and some moving average of the self-correlations of such multiplicative functions converge to zero. Our proof
gives, for any $N \geq 2$,
$$\frac1{N}\sum_{m=1}^{N}\Big|\frac1{N}\sum_{n=1}^{N} \bnu(n) \bnu(n+m)\Big| \leq \frac{C}{\log(N)^{\epsilon}},$$
and
$$\frac1{N^2}\sum_{n,p=1}^{N}\Big|\frac1{N}\sum_{m=1}^{N} \bnu(m) \bnu(n+m)\bnu(m+p)\bnu(n+m+p)\Big| \leq \frac{C}{\log(N)^{\varepsilon}},$$
where $C,\varepsilon$ are some positive constants and $\bnu$ is a bounded multiplicative function
satisfying a Daboussi-Delange condition with logarithmic speed.  We further establish that the cubic
nonconventional ergodic averages of any order with Mangoldt weight converge almost surely provided that all the systems are
nilsystems.

%Among such multiplicative functions there are the M\"{o}bius
%and the Liouville functions.
%We further examine the pointwise convergence of
%the cubic nonconventional ergodic averages weighted with von-Mangoldt  function.

%\vspace{7cm}

%\hspace{-0.7cm}{\em AMS Subject Classifications} (2000): 28D15 (Primary), 05D10, 11B37, 37A45 (Secondary).\\

%{\em Key words and phrases:} .\
%, Davenport estimation, Chowla conjecture, Elliott's conjecture, Sarnak's conjecture, self-correlation.\\

\end{abstract}

\maketitle
%\hat{}\newpage
\section{Introduction.}
The purpose of this note is motivated, on one hand, by the recent great interest on the M\"{o}bius
function from the dynamical point view, and on the other hand, by the problem of the multiple recurrence which
goes back to the seminal work of Furstenberg \cite{Fur}. This later problem has nowadays a long history.\\

The dynamical study of M\"{o}bius function was initiated recently by Sarnak in \cite{Sarnak4}.
%\footnote{See also\cite{Sarnak2}, \cite{Sarnak3}, \cite{Sarnak4}, \cite{Sarnak5}}.
There, Sarnak made a conjecture that  the M\"{o}bius
function is orthogonal to any deterministic dynamical sequence. Notice that this precise the definition of a reasonable
sequence in the  M\"{o}bius randomness law mentioned by Iwaniec-Kowalski in \cite[p.338]{Iwaniec}.  Sarnak further
mentioned that Bourgain's approach allows to prove that for almost all point $x$ in any measurable dynamical system
$(X,\ca,T,\Pro)$, the M\"{o}bius function is orthogonal to any dynamical sequence $f(T^nx)$. In \cite{al-lem},
using a spectral theorem combined with Davenport' estimation and Etmadi's trick, the authors gave a simple proof of this fact. %{\todoH{Please, I add this}}.
Subsequently, Cuny and Weber gave a  proof in which they showed  that there is a rate in this almost sure
convergence \cite{Cuny-W}. They further used Bourgain's method to prove that the almost sure convergence holds
for other arithmetical functions, like the divisor function, the theta function and  the generalized
Euler totient function. Very recently, using Green-Tao estimation \cite{Green-Tao} combined with the method in \cite{al-lem}, Eisner in \cite{Tanja}
proved that almost surely the dynamical sequence $f(T^{p(n)}x)$, where $p$ is an integer polynomial,
is orthogonal to the M\"{o}bius function. She further proved that the M\"{o}bius function is a good
weight (with limit $0$) for the multiple polynomial mean ergodic theorem by using Qing Chu's result \cite{Chu}.
Subsequently, in a very recent preprint \cite{Nikos-Host},
Host and Frantzikinakis  established that any multiplicative function with mean value along any arithmetic sequence
is a good weight for the multiple polynomial mean ergodic theorem (with limit $0$ for aperiodic multiplicative functions). \\

Here, we are interested in the pointwise convergence of cubic nonconventional ergodic averages with  multiplicative functions
satisfying a strong Daboussi-Delange condition. \\
%As a consequence we obtain that the  cubic nonconventional
%ergodic averages with M\"{o}bius and Liouville weights converge to zero almost surely.\\

The convergence of cubic nonconventional ergodic averages was initiated by  Bergelson in \cite{Berg},
where convergence in $L^2$ was shown for order 2 and under the extra assumption that all the
transformations are equal. Under the same assumption, Bergelson's result was extended by
Host and Kra for cubic averages of order 3  in \cite{Host-K1}, and for arbitrary order in \cite{Host-K2}.
Assani proved that pointwise convergence of cubic nonconventional ergodic averages of order 3 holds for not necessarily
commuting maps in \cite{Assani1, Assani2}, and further established the pointwise convergence for cubic averages of
arbitrary order when all the maps are equal.
In \cite{chu-nikos}, Chu and Frantzikinakis completed the study and established the pointwise convergence for
the cubic averages of arbitrary order.
Very recently, Huang-Shao and Ye \cite{Ye} gave a topological-like proof of the pointwise convergence
of the cubic nonconventional ergodic average when all the maps are equal. They further applied their
method to obtain the pointwise convergence of nonconventional ergodic averages for a distal system.\\

Here, we establish that the cubic averages of any order with the multiplicative function
weight satisfying a strong Daboussi-Delange condition converge to zero almost surely.  The proof depends
heavily on the double recurrence Bourgain's theorem (DRBT for short) \cite{BourgainD}. As a consequence,
we obtain that the cubic averages of any order with M\"{o}bius or Liouville weights converge to zero
almost surely.
We further obtain an estimation of the speed of the convergence of the C\'esaro mean of the self-correlation of the
multiplicative function satisfying a Daboussi-Delange condition with logarithmic speed. Those C\'esaro means of the self-correlation
converge obviously to zero.
Our proofs yields that some moving averages of the self-correlation of the multiplicative function satisfying a
Daboussi-Delange condition with logarithmic speed are summable along any divergent geometric sequence.\\

We further obtain with the help of Davenport estimation that some moving averages of the
self-correlation of  M\"{o}bius function or Liouville function  converge to zero, and they
are summable along any divergent geometric sequence. Moreover, We establish that  the cubic nonconventional ergodic averages weighted with 
von-Mangoldt function converge provided that all the systems are nilsystems. This result is obtained as a consequence of 
the very recent result of Ford-Green-Konyagin-Tao \cite{FGKtao} combined with some ingredients from \cite{Assani1}.
\vskip 0.2cm

The paper is organized as follows. In section 2, we recall the main ingredients needed for the proof. In section 3, we state
our main results and its consequences, and prove our first main result. In section 4, we give proofs of our other main results.
%we prove our third main result which asserts that the Ces\`{a}ro mean and
%some moving averages of the self-correlations of M\"{o}bius, Liouville and Mangoldt functions converge to zero.
%we complete the proof of our second and third main results.  we give an alternative
%proof of our first main result in the case of order 2 and 3 and when at least one of the dynamical system is a nilseystem.
%In section 5, we establish the Wiener-Wintner's version of Katai-Bourgain-Sarnak-Ziegler criterion,
%and we end the section by given a much simpler proof of our first main result for order $2$ and $3$ with M\"{o}bius or Liouville weights.
%We further provide the proof of our result on the convergence almost sure for the cubic nonconventional ergodic averages weighted with von-Mangoldt
%function converge. \\

When this paper was under preparation, we learned that Matom\"{a}ki, Radzwi{\l}{\l} and Tao \cite{MRtao} proved that for any natural number
$k$, and for any $10 \leq H \leq X$, we have
$$\sum_{1 \leq h_1,h_2,\cdots,h_k \leq H}\Big|\sum_{1 \leq n \leq X}\lamob(n+h_1)\cdots
\lamob(n+h_k)\Big|\ll k \Big(\frac{\log\log H}{\log H}+\frac1{\log^{\frac1{3000}}X}\Big) H^{k-1} X.$$
In the case $k=2$, this gives
$$\sum_{1 \leq h \leq X} \Big|\sum_{1 \leq n \leq X}\lamob(n)\lamob(n+h)\Big|\ll k \Big(\frac{\log\log H}{\log H}+\frac1{\log^{\frac1{3000}}X}\Big) H X.$$
This last estimation is largely bigger than our estimation when $H=X.$\\

We remind that besides, some estimation of limsup and liminf of some correlations of Liouville were obtained by several
authors: Graham and Hansely, Harman-Pintz and Wolke, and Cassaigne-Ferenczi-Mauduit-Rivat and S\'{a}rk\"{o}zy.
We refer to \cite{CF} for more details and for the references on the subject. We further refer to the recent
work of Matom\"{a}ki, Radzwi{\l}{\l} \cite{MR} in which the authors proved that for any $h \geq 1$, there exits $\delta(h)>0$ such that
$$\frac1{X}\Big|\sum_{j=1}^{X} \lamob(j) \lamob(j+1)\Big|<1-\delta(h),$$
for all large enough $X>1$.

\section{Basic definitions and tools.}\label{Sec:dt}
Recall that the Liouville function $\lamob ~~:~~ \N^*\longrightarrow  \{-1,1\}$ is defined by
$$
\lamob(n)=(-1)^{\Omega(n)},
$$
where $\Omega(n)$ is the number of prime factors of $n$ counted with multiplicities with $\Omega(1)=1$. \TODOH{Obviously,
$\lamob$ is completely mutiplicative, that is, $\lamob(nm)=\lamob(n)\lamob(m),$ for any $n,m \in \N^*$}. The integer $n$ is said to be not square-free if there is a prime number $p$ such that $n$ is in the class of $0$ mod $p^2$.
The M\"{o}bius function
$\bmu ~:~ \N \longrightarrow \{-1,0,1\}$ is define as follows
$$
\bmu(n)=\begin{cases}
\lamob(n),& \text{ if $n$ is square-free ;}\\
1,& \text{ if }n=1;\\
0,& \text{ otherwise}.
\end{cases}
$$
This definition of M\"{o}bius function establishes that the the restriction of Liouville function and M\"{o}bius function to set of
square free numbers coincident. Nevertheless, the  M\"{o}bius function is only mutiplicative, that is,
$\bmu(mn)=\bmu(m)\bmu(n)$ whenever $n$ and $m$ are coprime.\\

We further remind that the von Mangoldt function is given by
$$
\vMan(n)=\begin{cases}
\log(p),& \text{ if $n=p^\alpha,$ for some prime $p$ and $\alpha \geq 1$ ;}\\
0,& \text{ otherwise}.
\end{cases}
$$
We shall need the following crucial result due to Davenport \cite{Da} %(for a recent proof see \cite{ramare}).
\begin{Prop}\label{PDAV1}For any $\epsilon>0$, for any $N \geq 2$, we have
\begin{eqnarray}\label{Dav1}
\sum_{n=1}^{N}\bmu(n) e^{2\pi i n  t} = O\Big(\frac{N}{\ln(N)^{\epsilon}}\Big),
\end{eqnarray}
uniformly in $t$.
\end{Prop}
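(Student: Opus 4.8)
The plan is to run Davenport's classical argument, which in fact yields the stronger statement with $\ln(N)^{\epsilon}$ replaced by an arbitrary fixed power $\ln(N)^{A}$; the price is that the constant $C$ in \eqref{Dav1} is ineffective, since the argument passes through Siegel's theorem. Fix $t\in\R$ and write $e(x)=e^{2\pi i x}$. By Dirichlet's approximation theorem, pick coprime integers $a,q$ with $1\le q\le Q:=N/\ln(N)^{A+1}$ and $|t-a/q|\le 1/(qQ)$; in particular $|t-a/q|\le 1/q^{2}$. The argument splits into a \emph{minor-arc} case $(\ln N)^{B}<q\le Q$ and a \emph{major-arc} case $q\le(\ln N)^{B}$, where $B=B(A)$ is chosen large at the end.

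On the minor arcs I would apply a bilinear decomposition of $\bmu$ — Vaughan's identity (or Vinogradov's original device) — to write $\sum_{n\le N}\bmu(n)e(nt)$ as a bounded combination of ``Type I'' sums $\sum_{d\le U}\alpha_d\sum_{m\le N/d}e(dmt)$ and ``Type II'' sums $\sum_{U<d\le V}\sum_{m}\alpha_d\beta_m e(dmt)$, where $\alpha_d,\beta_m$ are divisor-bounded and $U,V$ are parameters with $UV\asymp N$. For the Type I sums one bounds the inner geometric progression by $\min\!\big(N/d,\,(2\|dt\|)^{-1}\big)$ and invokes the standard lemma $\sum_{d\le D}\min(N/d,\|dt\|^{-1})\ll(D+q+N/q)\ln(2qN)$, which is available because $|t-a/q|\le 1/q^{2}$. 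For the Type II sums one applies Cauchy--Schwarz in $d$, expands the square, and counts pairs $(m_1,m_2)$ with $\|(m_1-m_2)t\|$ small using the same lemma. Optimizing $U,V$ as in Vaughan's method gives $\big|\sum_{n\le N}\bmu(n)e(nt)\big|\ll\ln(N)^{O(1)}\big(Nq^{-1/2}+N^{4/5}+N^{1/2}q^{1/2}\big)$, which on the range $(\ln N)^{B}<q\le Q$ is $\ll N/\ln(N)^{A}$ provided $B=B(A)$ is large enough.

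On the major arcs I would reduce to twisted M\"obius sums. Sorting $n$ into residue classes modulo $q$ and applying orthogonality of Dirichlet characters — a standard manipulation, with residues sharing a common factor with $q$ handled separately and contributing a lower-order term — one gets $\big|\sum_{n\le N}\bmu(n)e(na/q)\big|\ll\ln(N)^{O(1)}\max_{q'\mid q,\ \chi\bmod q'}\big|M(N,\chi)\big|$, where $M(N,\chi):=\sum_{n\le N}\bmu(n)\chi(n)$; the residual factor $e(n\beta)$ with $\beta=t-a/q$, $|\beta|\le\ln(N)^{A+1}/(qN)$, is removed by partial summation at the cost of a factor $\ll 1+N|\beta|\ll\ln(N)^{A+1}$. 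By the prime number theorem for Dirichlet $L$-functions together with the Siegel--Walfisz theorem — a classical zero-free region for $L(s,\chi)$, supplemented by Siegel's theorem to control a possible exceptional real zero — one has $M(N,\chi)\ll N\exp(-c\sqrt{\ln N})$ uniformly over all $\chi$ of modulus at most $(\ln N)^{B}$. Hence the major-arc contribution is $\ll N\ln(N)^{O(1)}\exp(-c\sqrt{\ln N})=o\big(N/\ln(N)^{A}\big)$. Combining the two cases, and recalling that $t\in\R$ was arbitrary, yields $\big|\sum_{n\le N}\bmu(n)e(nt)\big|\ll_{A}N/\ln(N)^{A}$ uniformly in $t$; since $A>0$ is unrestricted, the proposition follows.

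The main obstacle is precisely the uniformity in $t$: it is this that forces the major/minor arc dichotomy and hence the simultaneous use of the bilinear Type I / Type II estimates on the minor arcs and the (ineffective) Siegel--Walfisz input on the major arcs. Of these, the bilinear estimate is what caps the final bound at a power of $\ln N$ — on the major arcs one in fact has the much stronger saving $\exp(-c\sqrt{\ln N})$, so it is the minor-arc term $Nq^{-1/2}$ that is the real bottleneck. The ineffectivity of $C$ inherited from Siegel's theorem is irrelevant for the applications in this paper, where only the qualitative rate $O\big(N/\ln(N)^{\epsilon}\big)$ is used.
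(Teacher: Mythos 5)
The paper offers no proof of this proposition at all --- it is quoted directly from Davenport \cite{Da} --- so the only comparison to make is with the classical argument, and your sketch is exactly that: Dirichlet approximation, Vaughan-type Type I/Type II bounds on the minor arcs, and reduction to character sums plus Siegel--Walfisz on the major arcs, i.e.\ the modern rendering of Davenport's original Vinogradov-style proof, correct in outline and correctly noting that one gets an arbitrary power of $\log N$ at the cost of an ineffective constant. The one phrase to tighten is the claim that residues $n$ with $(n,q)>1$ give a ``lower-order term'': they are not trivially negligible, but writing $n=dm$ with $d=(n,q)$ (and using that $\bmu(n)=0$ unless $d$ is squarefree and $(d,m)=1$) reduces them to twisted M\"obius sums of length $N/d$ and modulus $q/d\le(\log N)^{B}$, to which the same Siegel--Walfisz estimate applies, so the conclusion is unaffected.
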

\noindent{}By Batman-Chowla's argument in \cite{Bat-Cho} we have\footnote{See the proof of Lemma 1 in \cite{Bat-Cho}.}
\begin{Prop}\label{PDAV2}
For any $\epsilon>0$, for any $N \geq 2$, we have
\begin{eqnarray}\label{Dav2}
\sum_{n=1}^{N}\lamob(n) e^{2\pi i n t} = O\Big(\frac{N}{\ln(N)^{\epsilon}}\Big),
\end{eqnarray}
uniformly in $t$.
\end{Prop}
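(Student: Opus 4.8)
The plan is to deduce Proposition~\ref{PDAV2} from Davenport's bound for the M\"obius function (Proposition~\ref{PDAV1}) via the classical convolution identity relating $\lamob$ and $\bmu$. Comparing Dirichlet series, $\sum_{n\ge1}\lamob(n)n^{-s}=\zeta(2s)/\zeta(s)=\zeta(2s)\cdot\zeta(s)^{-1}$, which unravels to
$$\lamob(n)=\sum_{d^2\mid n}\bmu(n/d^2)\qquad(n\ge1).$$
Substituting this into the exponential sum and writing $n=d^2m$ yields the exact identity
$$\sum_{n=1}^{N}\lamob(n)e^{2\pi i n t}=\sum_{d\le\sqrt N}\ \sum_{m\le N/d^2}\bmu(m)\,e^{2\pi i (d^2 t) m},$$
so that for each fixed $d$ the inner sum is a M\"obius exponential sum of length $\lfloor N/d^2\rfloor$ at frequency $d^2 t$, to which Proposition~\ref{PDAV1} applies uniformly in $t$.

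Next I would split the $d$-range at $d=N^{1/4}$. In the main range $d\le N^{1/4}$ the inner sum has length $\lfloor N/d^2\rfloor\ge \sqrt N-1$, hence $\ln(\lfloor N/d^2\rfloor)\ge\tfrac13\ln N$ once $N$ is large, and Proposition~\ref{PDAV1} bounds it by $O\!\big(\tfrac{N/d^2}{\ln(N)^{\epsilon}}\big)$ with an implied constant uniform in $d^2 t$, hence in $t$; summing against $\sum_{d\ge1}d^{-2}<\infty$ gives a contribution $O\!\big(\tfrac{N}{\ln(N)^{\epsilon}}\big)$. In the complementary range $N^{1/4}<d\le\sqrt N$ I would simply use $\big|\sum_{m\le N/d^2}\bmu(m)e^{2\pi i(d^2 t)m}\big|\le N/d^2$, whose sum over $d>N^{1/4}$ is $O(N^{3/4})=o\!\big(N/\ln(N)^{\epsilon}\big)$ for every fixed $\epsilon>0$. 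Adding the two ranges produces the asserted estimate, uniformly in $t$.

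The one point that needs care is that the logarithmic saving of Davenport's estimate must survive the summation over $d$; this is exactly why the cutoff is placed at $d\le N^{1/4}$, which keeps the inner M\"obius sum of polynomial length ($\ge\sqrt N$) so that $\ln(N/d^2)^{\epsilon}$ stays comparable to $\ln(N)^{\epsilon}$, while the short tail $d>N^{1/4}$ is absorbed by absolute values alone. No input beyond the identity $\lamob(n)=\sum_{d^2\mid n}\bmu(n/d^2)$ is needed; this is the argument underlying Lemma~1 of \cite{Bat-Cho}.
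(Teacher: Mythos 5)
Your argument is correct and is precisely the one the paper relies on: the paper proves Proposition~\ref{PDAV2} simply by invoking Lemma~1 of \cite{Bat-Cho}, whose proof is exactly your convolution identity $\lamob(n)=\sum_{d^2\mid n}\bmu(n/d^2)$ combined with Davenport's uniform bound applied to the inner sums at frequencies $d^2t$. Your explicit splitting at $d\le N^{1/4}$ with the trivial bound on the tail is a clean write-up of that same argument, so there is nothing to add.
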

The previous estimation seems to be the best known result. However, if we restrict our self to the subset of the circle and allowed the estimation
non to be uniform, Murty and Sankaranarayanan \cite{MSankara} proved the following

\begin{Prop}\label{MSanka}
For any $\epsilon>0$, for any $N \geq 2$, we have
\begin{eqnarray}\label{MS}
\sum_{n=1}^{N}\lamob(n) e^{2\pi i n t} = O\Big(N^{{\frac{4}{5}}+\epsilon}\Big),
\end{eqnarray}
for all $t$ of type 1.
\end{Prop}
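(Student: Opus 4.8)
The plan is to combine a Vaughan-type combinatorial identity for $\lamob$ with the Diophantine hypothesis on $t$. Write $e(x)=e^{2\pi i x}$ and let $\|\cdot\|$ denote the distance to the nearest integer; recall that ``$t$ of type $1$'' means that for every $\eta>0$ one has $\|qt\|\gg_\eta q^{-1-\eta}$ for all $q\ge1$, equivalently that the convergent denominators $q_k$ of the continued fraction expansion of $t$ satisfy $q_{k+1}\ll_\eta q_k^{1+\eta}$.

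\emph{Step 1 (reduction to a uniform minor-arc estimate).} I would first isolate the statement that there are exponents $0<\theta_1<\theta_2<1$ (near $2/5$ and $3/5$) such that, whenever $(a,q)=1$, $|t-a/q|\le q^{-2}$ and $q\in[N^{\theta_1},N^{\theta_2}]$, one has $\sum_{n\le N}\lamob(n)e(nt)\ll_\epsilon N^{4/5+\epsilon}$ with a constant depending only on $\epsilon$. Granting this, the proposition follows at once: if $t$ has type $1$ and $N$ is large, some convergent denominator $q$ of $t$ lies in $[N^{\theta_1},N^{\theta_2}]$ --- were this not so, two consecutive denominators would straddle the interval, i.e.\ $q_k<N^{\theta_1}<N^{\theta_2}<q_{k+1}$, contradicting $q_{k+1}\ll_\eta q_k^{1+\eta}$ once $\eta$ is small enough --- and Dirichlet's theorem supplies an approximation $a/q$ with that $q$. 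The finitely many small $N$ are absorbed into the implied constant.

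\emph{Step 2 (the minor-arc estimate via Vaughan's method).} For the estimate of Step 1 I would use a Vaughan-type decomposition adapted to the Dirichlet series $\zeta(2s)/\zeta(s)$ of $\lamob$, most transparently obtained by convolving the usual Vaughan identity for $\bmu$ --- for instance $1/\zeta(s)=2G-\zeta G^2+\bigl(1/\zeta-G\bigr)\bigl(1-\zeta G\bigr)$ with $G(s)=\sum_{n\le V}\bmu(n)n^{-s}$ --- with the indicator $\chi$ of the perfect squares, since $\lamob=\bmu\ast\chi$. This exhibits $\sum_{n\le N}\lamob(n)e(nt)$ as a sum of: a \emph{short} piece of length $O(V)$; \emph{Type I} sums $\sum_{k\le V^2}c_k\sum_{m\le N/k}e(kmt)$ with $|c_k|\ll\tau(k)$, handled through $\sum_k\tau(k)\min(N/k,\|kt\|^{-1})$ and the classical estimate for such sums in terms of $a/q$; \emph{Type II} bilinear sums $\sum_{m\ell\le N,\,m,\ell>V}\alpha_m\beta_\ell\,e(m\ell t)$ with coefficients dominated by divisor functions, treated by Cauchy--Schwarz in one variable, opening the square, and reducing to $\sum_{|h|\ll N/V}\min(N/M,\|ht\|^{-1})$ on dyadic scales $V<M\le N/V$; and a further contribution from the square-divisor part, which produces quadratic Weyl sums $\sum_{d\le X}e(d^2\beta)$ that I would bound by Weyl's inequality. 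Taking $V$ of size about $N^{2/5}$ (balancing the Type I length $V^2$ against the Type II loss $NV^{-1/2}$) and using $q\in[N^{\theta_1},N^{\theta_2}]$ to tame the $q$-dependent contributions, every piece is $\ll_\epsilon N^{4/5+\epsilon}$, and one reads off the admissible $\theta_1,\theta_2$.

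\emph{Main obstacle.} The Möbius analogue of Step 2 is classical Vinogradov--Vaughan theory; the genuine difficulty is the square-divisor part for $\lamob$. One cannot dodge it by the naive identity $\lamob(n)=\sum_{d^2\mid n}\bmu(n/d^2)$, giving $\sum_{n\le N}\lamob(n)e(nt)=\sum_{d\le\sqrt N}\sum_{m\le N/d^2}\bmu(m)e(d^2mt)$, because this would require controlling the inner sums \emph{with power saving and uniformly in the dilated frequency $d^2t$}, and no such bound is available: the best estimate uniform in the frequency is Davenport's $O(N/\log^\epsilon N)$ of Proposition~\ref{PDAV2}. Hence the quadratic Weyl sums must be treated in place, which is exactly delicate when the frequency is close to a rational with small denominator --- the configuration that the type-$1$ hypothesis is designed to exclude. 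Carrying this out, together with the bookkeeping of the divisor-bounded coefficients through the Vaughan decomposition and the summation of the Type I sums over their moduli, is the technical heart of the argument, and is the content of Murty--Sankaranarayanan~\cite{MSankara}.
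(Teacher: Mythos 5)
The paper does not prove this proposition at all: it is quoted directly from Murty--Sankaranarayanan \cite{MSankara}, so there is no internal argument to measure your write-up against. Judged as a standalone proof, your submission is an outline rather than a proof: the decisive estimate of your Step 2 (the minor-arc bound after the Vaughan-type decomposition, including the treatment of the square-divisor part and the bookkeeping of divisor-bounded coefficients) is asserted and then explicitly outsourced to the very reference \cite{MSankara} being cited. So in effect your proposal and the paper do the same thing --- defer to Murty--Sankaranarayanan --- and the gap is that the technical heart is never carried out.

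One substantive point in your ``main obstacle'' paragraph deserves correction. You dismiss the route through $\lamob(n)=\sum_{d^2\mid n}\bmu(n/d^2)$ on the grounds that it would require a power-saving bound for $\sum_{m\le N/d^2}\bmu(m)e(md^2t)$ \emph{uniformly} in the frequency, which is unavailable. But uniformity is not needed: the type-$1$ hypothesis on $t$ transfers Diophantine information to the dilated frequencies $d^2t$. Indeed, approximating $d^2t$ by $a/q$ with $q\le Q$ and $|d^2t-a/q|\le 1/(qQ)$ via Dirichlet, the inequality $\|qd^2t\|\gg_\eta (qd^2)^{-1-\eta}$ forces $q$ to be large once $d$ is not too large, and then the standard Vaughan/Vinogradov minor-arc bound for M\"obius sums, with its explicit dependence on $q$ (of the shape $y/\sqrt q+y^{4/5}+\sqrt{yq}$ up to logarithms), gives a power saving at each individual frequency $d^2t$. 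Splitting the range of $d$ at a parameter $D$, bounding the tail $d>D$ trivially by $\sum_{d>D}N/d^2\ll N/D$, and balancing yields exactly the exponent $4/5+\epsilon$. This is both simpler than your proposed convolution of Vaughan's identity with the indicator of squares (which drags quadratic Weyl sums into the argument) and, to the best of my understanding, closer to how the cited result is actually obtained; in any case, the claim that the naive identity ``cannot'' work is unjustified as stated.
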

We remind that the irrational number $t$ is said to be of type $\eta$ if $\eta$ is the supremum of all $\gamma$ for which
$\liminf_{\overset{q\longrightarrow +\infty}{q \in \N}}q^{\gamma}\|qt\|_{\Z}=0,$
where, as is customary, $\|x\|_{\Z}$ denotes the distance to the nearest integer.\\

The authors in \cite{MSankara} pointed out that the following estimation is implicit in \cite{Da}. That is,
for all $t$ of type 1,
\begin{eqnarray}\label{Man}
\sum_{n=1}^{N}\vMan(n) e^{2\pi i n t} = O\Big(N^{{\frac{4}{5}}+\epsilon}\Big),
\end{eqnarray}
Here, obviously the constant in the estimation depend on $\alpha$. At this point, one may ask if Proposition \ref{MSanka} can
be improved by establishing that the estimation is uniform. If the answer is yes then our result (Theorem \ref{Main-Speed1})
can be much more improved. \TODOH{ But, following the state of art in the subject, we can define the following class of mutiplicative functions.\\

The multiplicative function is said to satisfy the strong Daboussi-Delange condition with logarithmic speed if for any $N \geq 2$, we have
$$\Big|\frac1{N}\sum_{n=1}^{N}\bnu(n)e^{2i\pi n \alpha}\Big| \leq \frac{c}{\log(N)^{\kappa}},~~~~\textrm{for~~all~~} \alpha ,$$
with $c,\kappa$ are positive constants, $\kappa<1$. \\

By Propositions \ref{PDAV1}, \ref{PDAV2}, the M\"{o}bius and Liouville functions satisfy the Daboussi-Delange condition
with logarithmic speed.
We further say that the multiplicative function $\bnu$ satisfies the strong Daboussi-Delange condition if
$$\sup_{ \alpha \in \R}\Big|\frac1{N}\sum_{n=1}^{N}\bnu(n)e^{2i\pi n \alpha}\Big| \tend{N}{+\infty}0.$$}

The  M\"{o}bius and Liouville functions are connected to the Riemann zeta function by the following
$$
\frac1{\zeta(s)}=\sum_{n=1}^{\infty}\frac{\mob(n)}{n^s}
\text{ and } \frac{\zeta(2s)}{\zeta(s)}=\sum_{n=1}^{\infty}\frac{\lamob(n)}{n^s}
\text{ for any }s\in\mathbb{C}\text{ with }\Re(s)>1.
$$
Let us recall that Chowla made a conjecture in~\cite{Cho} on the multiple self-correlation of $\bmu$ which can be stated as follows:
\begin{conj}[of Chowla]
For each choice of $1\leq a_1<\dots<a_r$, $r\geq 0$, with $i_s\in \{1,2\}$, not all equal to~$2$, we have
\begin{equation}\label{cza}
\sum_{n\leq N}\mob^{i_0}(n)\cdot \mob^{i_1}(n+a_1)\cdot\ldots\cdot\mob^{i_r}(n+a_r)={\rm o}(N).
\end{equation}
\end{conj}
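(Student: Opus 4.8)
Chowla's conjecture is a famous open problem, so the plan has to separate the accessible cases from the inaccessible ones. When $r=0$ the claim reads $\sum_{n\le N}\mob^{i_0}(n)={\rm o}(N)$: for $i_0=2$ one has $\sum_{n\le N}\mob^2(n)\sim\tfrac{6}{\pi^2}N$, which is exactly why the hypothesis ``not all equal to~$2$'' is imposed, and for $i_0=1$ it is precisely $\sum_{n\le N}\mob(n)={\rm o}(N)$ — equivalent to the Prime Number Theorem, and in fact an immediate consequence of Proposition~\ref{PDAV1} taken at $t=0$. So the first step is just to record that $r=0$ is known, and then to reduce everywhere from $\mob$ to the completely multiplicative Liouville function $\lamob$, using that $\mob$ and $\lamob$ agree on the square-free integers (Section~\ref{Sec:dt}) and that the non-square-free $n$ have controlled density in any fixed residue class; the complete multiplicativity $\lamob(ab)=\lamob(a)\lamob(b)$ is what makes the dilation arguments below run.

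For $r\ge 1$ I would work in layers. \emph{Two points first:} to obtain $\sum_{n\le N}\lamob(n)\lamob(n+h)={\rm o}(N)$, at least in logarithmic average, combine (i) the Matom\"{a}ki--Radziwi{\l}{\l} theory of multiplicative functions in short intervals, which forces cancellation for $\lamob$ over a short window, with (ii) an entropy-decrement argument transferring the correlation at scale $n$ to the correlation at scale $pn$ over many primes $p$, using $\lamob(pn)=-\lamob(n)$. \emph{Then higher order and higher powers:} for configurations in which an odd number of the $i_s$ equal~$1$, iterate the two-point input together with a Gowers-uniformity and concatenation step to reach the logarithmically averaged estimate, and finally attempt to remove the logarithmic weight by a variance argument of Matom\"{a}ki--Radziwi{\l}{\l} type. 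The exponential-sum estimates available to us here, Propositions~\ref{PDAV1}--\ref{MSanka}, would enter only as ``major-arc'' inputs: they bound the linear sums $\sum\lamob(n)e^{2\pi i n t}$ and by themselves say nothing about the products $\lamob(n)\lamob(n+h)$.

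\emph{The main obstacle} is the \textbf{parity problem}: sieve methods, and more generally any argument that sees $\lamob$ only through its correlations with bounded multiplicative functions, cannot distinguish integers with an even from those with an odd number of prime factors, so nothing short of an averaging over the shifts $a_1,\dots,a_r$ forces cancellation in an expression such as $\sum_{n\le N}\lamob(n)\lamob(n+1)$. Concretely, the even-order correlations at a \emph{fixed} shift are genuinely open — the prototype being $\frac1N\sum_{n\le N}\lamob(n)\lamob(n+1)$, where the best unconditional result is the Matom\"{a}ki--Radziwi{\l}{\l} bound $\frac1X\bigl|\sum_{j=1}^{X}\lamob(j)\lamob(j+1)\bigr|<1-\delta$ recalled above, still far from ${\rm o}(1)$. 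Thus, with the tools at hand, the realistic deliverable is the $r=0$ case together with the averaged and logarithmically averaged forms of \eqref{cza}; the fixed-shift, even-order instances are where the argument genuinely stalls.
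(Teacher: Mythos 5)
You were right not to attempt an actual proof here: the statement is Chowla's conjecture, which the paper records only as a conjecture (to motivate Sarnak's conjecture and the correlation estimates of Sections 3--4); there is no proof in the paper to compare against, and none is known. Your assessment of the landscape is accurate. The case $r=0$ (with $i_0=1$, forced by the hypothesis ``not all equal to $2$'', since $\sum_{n\le N}\bmu^2(n)\sim\tfrac{6}{\pi^2}N$) is equivalent to the Prime Number Theorem and indeed follows from Proposition~\ref{PDAV1} at $t=0$; every fixed-shift case with $r\ge 1$ is open, and the parity obstruction you name is the recognized reason --- the paper itself quotes Elliott's remark that even the two-point correlation at shift $1$ is not understood, and the strongest fixed-shift input it cites is the Matom\"{a}ki--Radziwi{\l}{\l} bound $1-\delta(h)$, far from ${\rm o}(1)$. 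Your identification of the accessible regime (averaged over the shifts, or logarithmically averaged two-point correlations via short-interval multiplicative theory and an entropy-decrement/dilation step exploiting $\lamob(pn)=-\lamob(n)$) is also consistent with what the paper actually proves in place of \eqref{cza}: not the conjecture, but shift-averaged bounds such as $\frac1N\sum_{m\le N}\big|\frac1N\sum_{n\le N}\bnu(n)\bnu(n+m)\big|\le C/\log(N)^{\kappa}$ (Theorem~\ref{Main-Speed1}, Corollary~\ref{Main-P}), i.e.\ cancellation only after averaging over the shift. One small caution: the passage from $\bmu$ to $\lamob$ for correlations is not just the observation that the two functions agree on square-free integers --- the equivalence of the $\bmu$- and $\lamob$-forms of Chowla requires a genuine sieve/identity argument --- but since you invoke it only as a preliminary to a program you candidly describe as stalling at the parity barrier, this does not affect the verdict that your account is correct as far as it can go.
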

In \cite{Sarnak4}, Sarnak noticed  that Chowla conjecture is a notorious conjecture in number theory, and
formulated the following conjecture:
\begin{conj}[of Sarnak]
For any dynamical system $(X,T)$, where $X$ is a compact metric space and $T$ is a homeomorphism of zero
topological entropy, for any $f\in C(X)$ and any $x \in X$, we have
\begin{equation}\label{sar}
\sum_{n\leq N}f(T^nx)\mob(n)={\rm o}(N).
\end{equation}
\end{conj}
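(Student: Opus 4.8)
The plan is to reduce the conjecture to structured ``model'' systems, exploiting that zero topological entropy forces the sequence $n\mapsto f(T^nx)$ to be non-random in a usable sense. First I would dispose of the rotational case: when $(X,T)$ is a translation on a compact abelian group, $f(T^nx)$ is uniformly approximated by trigonometric polynomials $\sum_{j} c_j e^{2\pi i n\alpha_j}$, so Davenport's bound (Proposition \ref{PDAV1}), being uniform in the frequency, gives \eqref{sar} --- indeed with a power-of-logarithm rate. The natural continuation is to climb the structure theory: for a nilsystem $f(T^nx)$ is a basic nilsequence, and one invokes Green--Tao--Ziegler orthogonality of $\mob$ to nilsequences. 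Since one expects every zero-entropy system to be ``governed'', for the purpose of averaging against $\mob$, by its inverse limit of nilfactors, the strategy is to bootstrap from nilsystems to the general case through a structure/uniformity decomposition $f=f_{\mathrm{str}}+f_{\mathrm{u}}$.

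For the uniform part the intended engine is the Bourgain--Sarnak--Ziegler (of K\'atai--Daboussi--Delange type) disjointness criterion: if for almost all pairs of distinct primes $p,q$ the quantity $\limsup_{N}\frac1N\big|\sum_{n\le N} f(T^{pn}x)\,\overline{f(T^{qn}x)}\big|$ is small, then $\frac1N\sum_{n\le N} f(T^nx)\mob(n)\to 0$. So I would try to verify this shifted-prime decorrelation from the zero-entropy hypothesis together with ergodic rigidity of the underlying system. An alternative route would deduce \eqref{sar} from (logarithmically averaged) instances of Chowla's conjecture on correlations of $\mob$, which are now available in low-complexity cases through the work of Tao and of Matom\"{a}ki--Radzwi{\l}{\l}--Tao.

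The main obstacle --- and the reason the conjecture is still open in general --- is that there is no known mechanism converting the purely qualitative hypothesis $h_{\mathrm{top}}(T)=0$ into the quantitative Fourier or higher-order uniformity control of $(f(T^nx))_n$ that all of the above estimates require: zero entropy is compatible with rank-one and related constructions whose orbits are subexponentially but genuinely complicated, and for those no bilinear K\'atai-type bound is presently known. I would therefore not expect this plan to close unconditionally; what it does deliver cleanly is exactly the abelian and nilsystem regimes, which is precisely the setting in which the von-Mangoldt-weighted cubic averages are treated later in this paper.
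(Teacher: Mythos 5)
This statement is labelled as a conjecture in the paper, and that is exactly what it is: Sarnak's conjecture is an open problem, the paper offers no proof of it (nor does anyone), and it is quoted only as motivation. So there is no ``paper proof'' to compare against; the most the paper establishes in this direction is the special case recorded as Proposition \ref{Mobius-Nil}, namely that Sarnak's conjecture holds for nilsystems, obtained from the Green--Tao orthogonality estimate (Proposition \ref{Green-Tao-th}), and the rotation/Davenport case which is classical.

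Your proposal, to its credit, ends by conceding the point, but the concession is the whole matter: the plan does not prove the statement, and the gap you name is not a technical loose end but the entire content of the conjecture. Concretely, the step ``every zero-entropy system is governed, for averaging against $\mob$, by its nilfactors via a decomposition $f=f_{\mathrm{str}}+f_{\mathrm{u}}$'' has no justification: the Host--Kra/Chu--Frantzikinakis--Host decompositions (as in Proposition \ref{NSZE}) control $L^2$ multiple ergodic averages with respect to an invariant measure, and the uniform part is only small in Gowers seminorms relative to that measure; zero \emph{topological} entropy gives no quantitative higher-order uniformity, no Fourier decay, and no way to make the error term $f_e$ negligible pointwise against the sign pattern of $\mob$. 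Likewise the K\'atai--Bourgain--Sarnak--Ziegler criterion (Proposition \ref{KBSZ}) requires the bilinear decorrelation $\limsup_N \frac1N\big|\sum_{n\le N} f(T^{pn}x)\overline{f(T^{qn}x)}\big|<\varepsilon$ for distinct primes $p,q$, and zero entropy simply does not imply this (rank-one and other low-complexity systems are exactly the known obstruction). What your outline does correctly deliver --- the abelian case from Proposition \ref{PDAV1} and the nilsystem case from Green--Tao --- coincides with what the paper actually uses later, but it is a proof of special cases, not of the conjecture as stated.
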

He further announced that Chowla conjecture implies Sarnak conjecture, and wrote ``we persist in maintaining
Conjecture \eqref{sar} as the central one even though it is much weaker than Conjecture \eqref{cza}. The point
is that Conjecture \eqref{sar} refers only to correlations of $\bmu$ with deterministic sequences and avoids the
difficulties associated with self-correlations." For the ergodic proof of the fact that Chowla conjecture implies
Sarnak conjecture we refer the reader to \cite{al-lem} and the references therein.\\
%\footnote{For a purely combinatorial proof see the references in \cite{al-lem} .}.\\

\TODOH{At this point, let us mention that in many cases Sarnak's conjecture holds as a consequence of the following criterion.
\begin{Prop}[Katai-Bourgain-Sarnak-Ziegler criterion]\label{KBSZ}
Let $(X,\ca,\mu)$ be a Lebesgue probability space and
$T$ be an invertible measure preserving transformation. Let $\bnu$ be a multiplicative,
%\footnote{An arithmetical function $\bnu$ }. Let
$f \in L^{\infty}$ with $\|f\|_{\infty} \leq 1$ and
$\varepsilon>0$.  Assume that for almost all point $x\in X$ and for all different prime numbers
$p$ and $q$ less than $\exp(1/\varepsilon)$, we have
  \begin{equation}
    \label{eq:f_k limit}
     \limsup_{N\to\infty}\left| \dfrac{1}{N}\sum_{n=1}^N f (T^{pn}x) f(T^{qn}x) \right| < \varepsilon,
  \end{equation}
  then, for almost all $x \in X$, we have
  \begin{equation}
    \label{eq:almost orthogonality for f_k}
     \limsup_{N\to\infty} \left|\dfrac{1}{N} \sum_{n=1}^N \bnu(n) f(T^{n}x) \right| < 2\sqrt{\varepsilon\log1/\varepsilon}.
  \end{equation}
\end{Prop}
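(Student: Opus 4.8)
The plan is to derive this from the multiplicativity of $\bnu$ by the K\'atai--Bourgain--Sarnak--Ziegler argument, which converts the average on the left of \eqref{eq:almost orthogonality for f_k} into the dilated self-correlations of $f$ appearing in \eqref{eq:f_k limit}. Fix a point $x$ for which the hypothesis holds, set $a(n)=f(T^{n}x)$ and write $u=\bnu$. Let $\mathcal P=\{p\ \text{prime}:\ p<e^{1/\varepsilon}\}$, a finite set, and put $W=\sum_{p\in\mathcal P}\frac1p$; by Mertens' theorem $W=\log(1/\varepsilon)+O(1)$. Since $\mathcal P$ is finite and every limit in \eqref{eq:f_k limit} is taken along a length tending to infinity, there is $N_{0}=N_{0}(x,\varepsilon)$ such that, for all $N\ge N_{0}$, all $M\ge N/e^{1/\varepsilon}$, and all distinct $p,q\in\mathcal P$,
$$\Big|\sum_{m\le M}a(pm)\,\overline{a(qm)}\Big|\le \varepsilon M .$$

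The first step is to introduce the primes of $\mathcal P$ by a combinatorial averaging. With $\omega_{\mathcal P}(n)=\#\{p\in\mathcal P:p\mid n\}$, the Tur\'an--Kubilius inequality gives $\sum_{n\le N}(\omega_{\mathcal P}(n)-W)^{2}\ll NW$, so Cauchy--Schwarz yields
$$\Big|\frac1N\sum_{n\le N}u(n)a(n)\Big|\le \frac1{WN}\Big|\sum_{n\le N}\omega_{\mathcal P}(n)\,u(n)a(n)\Big|+O\Big(\frac1{\sqrt W}\Big).$$
Expanding $\sum_{n\le N}\omega_{\mathcal P}(n)u(n)a(n)=\sum_{p\in\mathcal P}\sum_{m\le N/p}u(pm)a(pm)$ and using multiplicativity ($u(pm)=u(p)u(m)$ off the sparse set $p^{2}\mid pm$, which contributes $O(N\sum_{p}p^{-2})=O(N)$ in total) gives
$$\frac1N\sum_{n\le N}u(n)a(n)=\frac1{WN}\sum_{p\in\mathcal P}u(p)\sum_{m\le N/p}u(m)\,a(pm)+O\Big(\frac1{\sqrt W}\Big).$$

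Next I would interchange the summations, writing the main term as $\frac1{WN}\sum_{m\le N}u(m)D(m)$ with $D(m)=\sum_{p\in\mathcal P,\ pm\le N}u(p)a(pm)$, and apply Cauchy--Schwarz in $m$ (using $|u(m)|\le1$) to bound its square by $\frac1{W^{2}N}\sum_{m\le N}|D(m)|^{2}$. Expanding the square splits this into the diagonal $\sum_{p\in\mathcal P}\sum_{m\le N/p}|a(pm)|^{2}\le NW$ and the off-diagonal
$$\sum_{\substack{p,q\in\mathcal P\\ p\ne q}}u(p)\overline{u(q)}\sum_{m\le N/\max(p,q)}a(pm)\,\overline{a(qm)} .$$
The inner sums here are exactly the dilated correlations of $f$, and by the choice of $N_{0}$ each is $\le\varepsilon N/\max(p,q)$ in absolute value, so the off-diagonal is controlled by $\varepsilon N$ times a sum over pairs of primes in $\mathcal P$. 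Re-assembling the diagonal, the off-diagonal and the error terms, letting $N\to\infty$ and inserting $W=\log(1/\varepsilon)+O(1)$, then leads to a bound of the shape $C\sqrt{\varepsilon\log(1/\varepsilon)}$ for $\limsup_{N}\big|\frac1N\sum_{n\le N}\bnu(n)f(T^{n}x)\big|$; a careful version of the argument is what pins the constant down to $2$, giving \eqref{eq:almost orthogonality for f_k}.

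The step I expect to be the real obstacle is precisely this last one: organising the sum over pairs $p\ne q$ in $\mathcal P$, together with the Tur\'an--Kubilius error, the $p^{2}\mid n$ contribution, and the $o(N)$ coming from the $\limsup$'s in \eqref{eq:f_k limit}, so that they are genuinely absorbed against $W^{2}$ and the final constant is exactly $2\sqrt{\varepsilon\log(1/\varepsilon)}$ rather than merely a quantity of that order. This is where the threshold $e^{1/\varepsilon}$ and the Mertens asymptotics $W=\log(1/\varepsilon)+O(1)$ have to be used with care; note also that no hypothesis on $\bnu$ beyond multiplicativity and $|\bnu|\le1$ is needed --- all the arithmetic content is carried by \eqref{eq:f_k limit}. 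Everything else (reduction to a fixed generic $x$ and to finitely many primes, the multiplicativity rearrangement, and the two Cauchy--Schwarz steps) is routine.
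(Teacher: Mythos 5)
Your sketch is the classical K\'atai argument (Tur\'an--Kubilius weighting by $\omega_{\mathcal P}(n)/W$, multiplicativity to pull out $\bnu(p)$, Cauchy--Schwarz in $m$, reduction to the dilated correlations in \eqref{eq:f_k limit}), and it does prove a \emph{qualitative} version of the criterion: if the correlations are $o(M)$ for every fixed pair of primes, one gets $\limsup\le C/\sqrt W$ for each fixed finite prime set and then lets the set grow. But as a proof of the stated quantitative inequality it has a genuine gap, and the step you yourself flag as ``the real obstacle'' is not a matter of pinning down the constant $2$: it fails. With $\mathcal P$ equal to \emph{all} primes below $e^{1/\varepsilon}$ and your normalization, the off-diagonal term contributes $\frac{\varepsilon}{W^{2}}\sum_{p\neq q\in\mathcal P}\frac1{\max(p,q)}$ to the square of the main term, and $\sum_{p\neq q\le X}\frac1{\max(p,q)}\asymp X/(\log X)^{2}$, which at $X=e^{1/\varepsilon}$ is of size roughly $\varepsilon^{2}e^{1/\varepsilon}$; this cannot be ``absorbed against $W^{2}\asymp(\log(1/\varepsilon))^{2}$'', and the factor $\varepsilon$ supplied by \eqref{eq:f_k limit} does not compensate an exponential in $1/\varepsilon$. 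Shrinking $\mathcal P$ to primes up to about $1/\varepsilon$ does tame the off-diagonal, but then $W\approx\log\log(1/\varepsilon)$ and the method only yields a bound of order $1/\sqrt{\log\log(1/\varepsilon)}$. Moreover, even ignoring the off-diagonal, the Tur\'an--Kubilius step alone already costs $O(1/\sqrt W)=O(1/\sqrt{\log(1/\varepsilon)})$, which for small $\varepsilon$ (the only regime where the statement has content) is much larger than the target $2\sqrt{\varepsilon\log(1/\varepsilon)}$. So no amount of careful bookkeeping inside your organization can reach the stated bound; the conclusion obtainable this way is strictly weaker.

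For comparison, the paper does not prove this proposition at all: it imports the finite orthogonality criterion of K\'atai and of Bourgain--Sarnak--Ziegler (\cite{Katai}, \cite{BSZ}), the only added content being the remark that the almost-sure pointwise form follows by choosing one full-measure set $X'$ on which \eqref{eq:f_k limit} holds simultaneously for the finitely many pairs $p\neq q<\exp(1/\varepsilon)$, and applying the arithmetic criterion to the bounded sequence $n\mapsto f(T^{n}x)$ for each $x\in X'$. The quantitative criterion itself is proved in \cite{BSZ} by a different arrangement: one does not compare $1$ with $\omega_{\mathcal P}(n)/W$, but discards the integers having no prime factor in a suitable subrange of $[1,e^{1/\varepsilon}]$, whose density by Mertens' theorem is of order $\varepsilon\log(1/\varepsilon)$, and it is this loss --- not a $1/\sqrt W$ term --- that is responsible for the bound $2\sqrt{\varepsilon\log(1/\varepsilon)}$ after the Cauchy--Schwarz step that brings in the pair correlations. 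If all you need is the criterion as a black box with \emph{some} bound tending to $0$ as $\varepsilon\to0$, your argument can be repaired as indicated above (restrict the prime set); but as written it does not prove the inequality in the statement, and for the stated form you should either reproduce the argument of \cite{BSZ} or, as the paper does, simply cite it and perform the pointwise specialization.
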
 
\TODOH{Let us notice that we state Katai-Bourgain-Sarnak-Ziegler criterion in the form that we will use. This form can be derived from 
the original one by putting $u(n)= f(T^{pn}x)$ for $x \in X'$ with $\mu(X')=1$. Moreover, a slight modification of the proof 
yields that the Wiener-Wintner version of it holds. But, we do not need such generalization.}
%Here, we state also
%Wiener-Wintner's version of it ((WWKBSZ for short): Wiener-Wintner's version of K\'{a}tai-Bourgain-Sarnak-Ziegler
%criterion \footnote{K\'{a}tai's version of the criterion is stated in \cite{Katai}.}).

%\begin{Prop}[WWKBSZ criterion]\label{WWKBSZ}
%Let $(X,\ca,\mu)$ be a Lebesgue probability space and $T$ be an invertible measure preserving transformation.
%Let $\bnu$ be a multiplicative function\footnote{An arithmetical function $\bnu$ is said to be multiplicative if
%$\bnu(mn)=\bnu(m)\bnu(n)$ whenever $n$ and $m$ are coprime.}. Let $f$ be in $L^{\infty}$ with $\|f\|_{\infty} \leq 1$ and
%$\varepsilon>0$ and assume that for almost all point $x\in X$ and for all different prime numbers $p$ and $q$ less than $\exp(1/\varepsilon)$, we have
 % \begin{equation}
  %  \label{eq:f_k limit}
   %  \limsup_{N\to\infty}\sup_{t}\left| \dfrac{1}{N}\sum_{n=1}^N e^{2 \pi i n (p-q) t}f (T^{pn}x) f(T^{qn}x) \right| < \varepsilon,
  %\end{equation}
  %then, for almost all $x \in X$, we have
  %\begin{equation}
  %  \label{eq:almost orthogonality for f_k}
   %  \limsup_{N\to\infty} \sup_{t}\left|\dfrac{1}{N} \sum_{n=1}^N \bnu(n) e^{2 \pi i n t}f(T^{n}x) \right| < 2\sqrt{\varepsilon\log1/\varepsilon}.
  %\end{equation}
%\end{Prop}
%\begin{proof}The proof is, indeed word-for-word the same as that of Theorem 2 in \cite{BSZ},
%except that for the equation (2.7) one needs to apply the following elementary inequality:
%for any two bounded positive functions $F$ and $G$, we have
%$$\sup(F(x)+G(x)) \leq \sup(F(x))+\sup(G(x)).$$
%\end{proof}
}

Given an arithmetical function $A$ ($A : \N\longrightarrow \C),$ and a positive integer $N \in \N$, for $n \in \{1,\cdots,N\}$,
we define a self-correlation coefficient $c_{n,N}$ of A by

$$ c_{n,N}(A)=\frac1{N}\sum_{m=1}^{N}A(m)A(m+n).
$$
According to Wiener \cite{Wiener}, if the limit exists for each $n$, then this defines the so-called spectral measure of $A$.

\subsection*{Cubic averages and related topics.}

Let $(X,\cb,\Pro)$ be a Lebesgue probability space and given three measure preserving transformations $T_1,T_2,T_3$ on $X$.
Let $f_1,f_2,f_3 \in L^{\infty}(X)$.  The cubic nonconventional ergodic averages of order $2$ with weight $A$ are defined by   $$\frac1{N^2}\sum_{n,m=1}A(n)A(m)A(n+m)f_1(T_1^nx) f_2(T_2^nx) f_3(T_3^nx).$$
This nonconventional ergodic average can be seen as a classical one as follows
$$\frac1{N^2}\sum_{n,m=1}\widetilde{f_1}({\widetilde{T_1}}^n(A,x))
{\widetilde{f_2}}({\widetilde{T_2}}^m(A,x)) {\widetilde{f_3}}({\widetilde{T_3}}^{n+m}(A,x)) ,$$
where $\widetilde{f_i}=\pi_0\otimes f_i, \widetilde{T_i}=(S \otimes T_i),~~~i=1,2,3$ and
$\pi_0$ is define by $x=(x_n)\longmapsto x_0$ on the space $Y=\C^{N}$ equipped with some probability measure.\\

The study of the cubic averages is closely and strongly related to the notion of seminorms introduced in \cite{Gowers} and \cite{Host-K2}. They are nowadays called Gowers-Host-Kra's seminorms.\\

Assume that $T$ is an ergodic measure preserving transformation on $X$. Then,  for any $k \geq 1$,
the Gowers-Host-Kra's seminorms on $L^{\infty}(X)$ are defined inductively as follows
 $$\||f|\|_1=\Big|\int f d\mu\Big|;$$
 $$\||f|\|_{k+1}^{2^{k+1}}=\lim\frac{1}{H}\sum_{l=1}^{H}\||\overline{f}.f\circ T^l|\|_{k}^{2^{k}}.$$

 For each $k\geq 1$, the seminorm $\||.|\|_{k}$ is well defined. For details, we refer the reader to \cite{Host-K2} and \cite{Host-Studia}. Notice that
 the definitions of Gowers-Host-Kra's seminorms can be easily extended to non-ergodic maps as it was mentioned by Chu and
  Frantzikinakis in \cite{chu-nikos}.\\

The importance of the Gowers-Host-Kra's seminorms in the study of the nonconventional multiple ergodic averages
is due to the existence of a $T$-invariant sub-$\sigma$-algebra $\mathcal{Z}_{k-1}$ of $X$ that satisfies
$$\E(f|\mathcal{Z}_{k-1})=0 \Longleftrightarrow \||f|\|_{k}=0.$$
This was proved by Host and Kra in \cite{Host-K2}. The existence of the factors $\mathcal{Z}_{k}$ was also showed by Ziegler in \cite{Ziegler}.
We further notice that Host and Kra established a connection between the $\mathcal{Z}_{k}$ factors and the nilsystems in \cite{Host-K2}.\\

\subsection*{Nilsystems and nilsequences.}  The nilsystems are defined in the setting of homogeneous space \footnote{For a nice account of the theory of the homogeneous space we refer the reader to \cite{Dani},\cite[pp.815-919]{B-katok}.}. Let $G$ be a Lie
group, and $\Gamma$ a discrete cocompact subgroup (Lattice, uniform subgroup) of $G$. The homogeneous space is given by
$X=G/\Gamma$ equipped with the Haar measure $h_X$ and the canonical complete $\sigma$-algebra
$\cb_{c}$. The action of $G$ on $X$ is by the left translation, that is, for any $g \in G$,
we have $T_g(x\Gamma)=g.x\Gamma=(gx)\Gamma.$ If further $G$ is a nilpotent Lie group of order
$k$, %\todoY{I deleted dimesion k, since order k system may have dimension great than k}, \todoH{Perfect.}
$X$ is said to be a $k$-step nilmanifold. For any fixed $g \in G$, the dynamical system $(X,\cb_{c},h_X,T_g)$
is called a $k$-step nilsystem. The basic $k$-step  nilsequences on $X$ are defined by $f(g^nx\Gamma)=(f \circ T_g^n)(x\Gamma)$,
where $f$ is a continuous function of $X$. Thus, $(f(g^nx\Gamma))_{n \in \Z}$ is any element of
$\ell^{\infty}(\Z)$, the space of bounded sequences, equipped with uniform norm
$\ds \|(a_n)\|_{\infty}=\sup_{n \in \Z}|a_n|$. A $k$-step nilsequence, is a uniform limit of basic $k$-step nilsequences.
For more details on the nilsequences we refer the reader to  \cite{Host-K3}
and \cite{BHK}\footnote{The term 'nilsequence' was coined by Bergleson-Host and Kra in 2005 \cite{BHK}.}.\\

Recall that the sequence of subgroups $(G_n)$ of $G$ is a filtration if  $G_1=G,$ $G_{n+1}\subset G_{n},$ and
$[G_n,G_p] \subset G_{n+p},$ where $[G_n,G_p]$ denotes the subgroup of $G$ generated by the commutators $[x,y]=
x~y~x^{-1}y^{-1}$ with $x \in G_n$ and $y \in G_p$.
The lower central filtration is given by $G_1=G$ and $G_{n+1}=[G,G_n]$. It is well know that the lower central filtration allows to construct a Lie algebra $\textrm{gr}(G)$ over the ring $\Z$ of integers. $\textrm{gr}(G)$ is called a
graded Lie algebra associated to $G$ \cite[p.38]{Bourbaki2}. The filtration is said to be of degree or length $l$ if
$G_{l+1}=\{e\},$ where $e$ is the identity of $G$.
We denote by $G^e$ the identity component of $G$. Since $X=G/\Gamma$ is compact, we can assume that $G/G^e$ is finitely generated \cite{Leib}.\\

If $G$ is connected and simply-connected  with Lie algebra $\mathfrak{g}$ \footnote{By Lie's fundamental theorems and up
to isomorphism, $\mathfrak{g}=T_eG$, where $T_eG$ is the tangent space at the identity $e$ \cite[p.34]{Kirillov}.},
then $\exp~~:~~G \longrightarrow \mathfrak{g}$ is a diffeomorphism, where $\exp$ denotes the Lie group exponential map.
%If $G$ is 2-step nilpotent in addition, then the multiplication law in $G$ can be expressed as follows in terms of the exponential map.
%$$\exp(X) . \exp(Y) = \exp(X +Y + \frac12 ([X, Y])) ~~~~~~~~~~~\textrm{for~~all~~~} X,Y \in  \mathfrak{g},$$
%where $[,]$ is the Lie Bracket on $\mathfrak{g}$.
We further have, by Mal'cev's criterion, that $\mathfrak{g}$ admits a basis $\mathcal{X}=\{X_1,\cdots,X_m\}$ with rational structure constants \cite{Malcev}, that is,
$$[X_i,X_j]=\sum_{n=1}^{m} c_{ijn}X_n,~~~~~~\textrm{for~~all~~~}  1 \leq i,j \leq k,$$
where the constants $c_{ijn}$ are all rational. \\
%Let us precise that the Mal'cev's criterion allows us to assert that the correspondence between
%lattices in $G$ and rational structures in $G$ is almost one-one.\\

Let $\mathcal{X}=\{X_1,\cdots,X_m\}$ be a Mal'cev basis of  $\mathfrak{g}$, then any element $g \in G$ can be uniquely written in the form $g=\exp\Big(t_1X_1+t_2X_2+\cdots+t_m X_m\Big),$ $t_i \in \R$, since the map $\exp$ is a diffeomorphism.
The numbers $(t_1,t_2,\cdots,t_k)$ are called the Mal'cev coordinates of the first kind of $g$. In the same manner,
$g$  can be uniquely written in the form $g=\exp(s_1X_1). \exp(s_2X_2).\cdots.\exp(s_m X_m),$ $s_i \in \R$.
The numbers $(s_1,s_2,\cdots,s_k)$ are called the Mal'cev coordinates of the second kind of $g$. Applying Baker-Campbell-Hausdorff formula, it can be shown that the multiplication law in $G$ can be expressed by a polynomial mapping
$\R^m \times \R^m \longrightarrow \R^m$ \cite[p.55]{Onishchik}, \cite{Green-Tao-Orbit}. This gives that any polynomial sequence $g$ in $G$ can be written as follows
$$g(n)=\gamma_1^{p_1(n)},\cdots,\gamma_m^{p_m(n)},$$
where $\gamma_1 ,\cdots,\gamma_m \in G$, $p_i ~~:~~\N \longrightarrow \N$ are polynomials \cite{Green-Tao-Orbit}. Given $n, h \in \Z$, we put
$$\partial_{h}g(n)=g(n+h)g(n)^{-1}.$$
This can be interpreted as a discrete derivative on $G$. Given a filtration $(G_n)$ on $G$,
 a sequence of polynomial $g(n)$ is said to be adapted to $(G_n)$ if  $\partial_{h_i}\cdots \partial{h_1}g$ takes values in $G_i$ for all positive integers $i$ and for all choices of $h_1, \cdots,h_i \in \Z$. The set of all polynomial sequences adapted to $(G_n)$ is denoted by ${\textrm{poly}}(\Z,(G_n))$.\\

  Furthermore, given a Mal'cev's basis ${\mathcal{X}}$ one can induce a right-invariant metric $d_{\mathcal{X}}$ on $X$ \cite{Green-Tao-Orbit}. We remind that for a real-valued function $\phi$ on $X$, the Lipschitz norm is defined by
$$\|\phi\|_{L}=\|\phi\|_{\infty}+\sup_{x \neq y}\frac{\big|\phi(x)-\phi(y)\big|}{d_{\mathcal{X}}(x,y)}.$$
The set $\mathcal{L}(X,d_{\mathcal{X}})$ of all Lipschitz functions is a normed vector space, and for any  $\phi$ and $\psi$ in $\mathcal{L}(X,d_{\mathcal{X}})$, $\phi \psi \in \mathcal{L}(X,d_{\mathcal{X}})$ and
$\|\phi \psi \|_L \leq \|\phi\|_L \|\psi\|_L$. We thus get, by Stone-Weierstrass theorem, that the subsalgebra $\mathcal{L}(X,d_{\mathcal{X}})$ is dense in the space of continuous functions $C(X)$ equipped with uniform norm $\|\|_{\infty}$. %\todoY{I changed $d_B$ by $d_X$, Am I right?}\todoH{Absolutely.}
It turns out that for a Lipschitz function, the extension from an arbitrary subset is possible without
increasing the Lipschitz norm, thanks to Kirszbraun-Mcshane extension theorem \cite[p.146]{Dudley}.\\

In this setting, we remind the following fundamental Green-Tao's theorem on the strong orthogonality of the M\"{o}bius
function to any $m$-step nilsequence, $m \ge 1$.
\begin{Prop}\label{Green-Tao-th}\cite[Theorem 1.1]{Green-Tao}.
%Let $G$ be a connected and simply-connected nilpotent Lie group and $\Gamma$ be a discrete subgroup of $G.$
Let $G/\Gamma$ be a $m$-step  nilmanifold for some $m\ge 1$.
Let $(G_p)$ be a filtration of $G$ of degree $l \ge 1$. Suppose that $G/\Gamma$ has a $Q$-rational Mal'cev basis $\mathcal{X}$ for some $Q \ge 2$, defining
a metric $d_{\mathcal{X}}$ on $G/\Gamma$. Suppose that $F : G/\Gamma\rightarrow [-1, 1]$ is a Lipschitz function. Then, for any $A>0$, we have the bound,
$$\underset{g \in {\textrm{poly}}(\Z, (G_{p}))}{\Sup}\Big|\frac1{N}\sum_{n=1}^{N}\bmu(n)F(g(n)\Gamma)\Big| \leq C\frac{(1 + ||F||_{L})} {\log^{A} N},$$
where the constant $C$ depends on $m,l,A,Q$, $N \geq 2$.
 \end{Prop}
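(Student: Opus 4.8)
The plan is to deduce the bound from the quantitative equidistribution theory of polynomial sequences on nilmanifolds, combined with the bilinear (Type I / Type II) decomposition of the M\"{o}bius function supplied by Vaughan's identity, the whole argument proceeding by induction on the degree $l$ of the filtration. The first step is the quantitative factorization theorem for polynomial sequences: at scale $N$ one writes $g = \varepsilon\, g'\, \gamma$, where $\varepsilon \in \textrm{poly}(\Z,(G_p))$ is $(M,N)$-smooth (slowly varying), $g'$ takes values in a closed connected rational subgroup $G' \subseteq G$ and $(g'(n)\Gamma')_{1\le n\le N}$ is totally $1/M$-equidistributed in $G'/\Gamma'$ for an appropriate Mal'cev structure, and $\gamma$ is $M$-rational, so that $n\mapsto\gamma(n)\Gamma$ is periodic with period $q\le M$. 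The point is that $M$ may be taken to grow only polylogarithmically in $N$, which is the range in which the eventual Siegel--Walfisz type inputs still win against the target $\log^{-A}N$.

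Next I would strip off the two harmless factors. On a subprogression of $[N]$ of length $\sim N/M$ the smooth part $\varepsilon$ is essentially constant, so it can be absorbed into a left translate of $F$ whose Lipschitz norm is controlled by $\|F\|_L$ times a power of $M$; the rational part $\gamma$ is removed by splitting $[N]$ into the $q\le M$ residue classes modulo $q$, on each of which $n\mapsto F(g(n)\Gamma)$ takes the form $n\mapsto \tilde F(g'(n)\Gamma')$ for a new Lipschitz function $\tilde F$. Hence, up to a factor polynomial in $M$ (and therefore polylogarithmic in $N$), it suffices to bound $\frac1{N}\sum_{1\le n\le N}\bmu(n)\tilde F(g'(n)\Gamma')$ under the assumption that $g'$ is totally equidistributed.

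The third step is to apply Vaughan's identity to expand $\bmu$ into $O(1)$ sums of Type I shape $\sum_{d\le D}a_d\sum_{w}\tilde F(g'(dw)\Gamma')$ and Type II shape $\sum_{D_1<m\le D_2}\sum_{w}a_m b_w\,\tilde F(g'(mw)\Gamma')$, with $|a_\bullet|,|b_\bullet|\le \tau(\bullet)$. For a Type I sum, $w\mapsto g'(dw)$ is a polynomial sequence adapted to the same filtration, so after a further localisation in the length of the $w$-sum the equidistribution hypothesis forces the inner sum to be $o(N/d)$, which sums acceptably in $d$. For a Type II sum, Cauchy--Schwarz in $w$ removes $a_m$ and leads to $\sum_{m,m'}\sum_{w}b_w^2\,\tilde F(g'(mw)\Gamma')\overline{\tilde F(g'(m'w)\Gamma')}$, a correlation of the nilsequence attached to $w\mapsto(g'(mw),g'(m'w))$ on $(G'/\Gamma')\times(G'/\Gamma')$. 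If this correlation were large for a positive proportion of pairs $(m,m')$, then by the equidistribution criterion each such pair would yield a nontrivial horizontal character on $G'\times G'$ along which $w\mapsto(g'(mw),g'(m'w))$ fails to equidistribute; a van der Corput / PET-type descent then converts this, for one of these pairs, into a nontrivial horizontal character relation for $g'$ on a filtration of strictly smaller degree, contradicting the total equidistribution obtained in the factorization step (the base case $l=1$ being the abelian one, handled by Davenport's estimate \eqref{Dav1} and classical Weyl sums). This closes the induction.

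The main obstacle is the quantitative bookkeeping of the complexity parameters. One has to verify that the growth of $M$ and the loss in the equidistribution rate accumulated through (i) the factorization, (ii) the residue-class splitting, (iii) the Cauchy--Schwarz step, and above all (iv) the degree-reducing descent in the Type II analysis, all remain polylogarithmic, so that the final bound really is of the stated form $C(1+\|F\|_L)\log^{-A}N$ with $C=C(m,l,A,Q)$. Equivalently, the genuinely hard part is establishing the quantitative nilmanifold equidistribution dichotomy --- either $(g'(n)\Gamma')$ is totally equidistributed, or it correlates with a nontrivial horizontal character --- with effective constants that are only polynomially bad in $Q$ and the degree, and then pushing this dichotomy through the bilinear machinery without losing more than polylog factors.
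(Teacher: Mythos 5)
This statement is not proved in the paper at all: it is quoted verbatim as \cite[Theorem 1.1]{Green-Tao} and used as a black box, so there is no ``paper's own proof'' to compare against. What you have written is, in outline, the strategy of the original Green--Tao argument itself: the factorization theorem from their companion paper on polynomial orbits (smooth $\times$ totally equidistributed $\times$ rational, with $M$ polylogarithmic in $N$), reduction to the equidistributed case on subprogressions, and then a Vaughan-type Type I/Type II bilinear decomposition of $\bmu$, with the Type II correlations handled by the quantitative equidistribution (Leibman-type) dichotomy on $G'/\Gamma'\times G'/\Gamma'$. So your route is faithful to the source rather than an alternative to anything in this paper. Two caveats. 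First, as a proof it is an outline: every step that carries real weight (the factorization theorem, the quantitative equidistribution dichotomy with polynomial dependence on the complexity, the verification that all losses stay polylogarithmic) is invoked rather than established, and these are precisely the contents of two substantial papers; a referee would not accept this as a proof of the proposition, only as an accurate roadmap. Second, a small structural inaccuracy: in the actual argument the induction on the degree of the filtration lives inside the quantitative equidistribution theorem (via van der Corput applied to the polynomial sequence), not inside the Type II analysis of the M\"obius sum; the Type II step only extracts a failure of equidistribution for the product sequence and then contradicts total equidistribution of $g'$ through the resulting horizontal character, rather than performing a ``PET-type descent to a filtration of strictly smaller degree'' within the orthogonality argument itself.
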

We further need the following decomposition theorem due to Chu-Frantzikinakis and Host from \cite[Proposition 3.1]{chu-Nikos-H}.  %\todoY{Is reference 16 correct? As I understand this is due to Green-Tao.} \todoH{I correct this and I add the correct reference.}
\begin{Prop}[NSZE-decomposition theorem \cite{chu-Nikos-H}]\label{NSZE} Let $(X,\ca,\mu,T)$ be a dynamical system, $f \in  L^{\infty}(X)$,
and $k \in \N$. Then for every $\varepsilon > 0$, there exist measurable functions $f_{ns} ,f_z ,f_e $, such that
\begin{enumerate}[(a)]
\item $\|f_{\kappa}\|_{\infty} \leq 2 \|f\|_{\infty}$ with $\kappa \in \{ns,z,e\}$.
\item $f = f_{ns} + f_z + f_e$ with  $|\|f_z\|_{k+1} = 0;~~ \|f_e\|_1 <\varepsilon;$ and
\item for $\mu$ almost every $x \in X$, the sequence $(f_{ns}(T^nx))_{n \in \N}$ is a $k$-step nilsequence.
\end{enumerate}
\end{Prop}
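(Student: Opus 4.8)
The plan is to read off the decomposition from the Host--Kra structure theory of the characteristic factors $\mathcal{Z}_k$, after reducing to the ergodic case. We may normalise $\|f\|_\infty\leq 1$. Writing $\mu=\int\mu_\omega\,dP(\omega)$ for the ergodic decomposition of $\mu$ with respect to $T$, it is enough to build, for $P$-almost every $\omega$, functions $f^\omega_{ns},f^\omega_z,f^\omega_e$ on $(X,\ca,\mu_\omega,T)$ satisfying (a)--(c) and $\|f^\omega_e\|_{L^1(\mu_\omega)}<\varepsilon$, and then to verify that all the data involved can be selected measurably in $\omega$; integrating against $P$ recovers the statement on $(X,\ca,\mu,T)$. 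The measurable dependence on $\omega$, and the non-ergodic version of the equivalence $\||h|\|_{k+1}=0\Leftrightarrow\E(h|\mathcal{Z}_k)=0$, are exactly the kind of bookkeeping carried out in \cite{chu-nikos}; I would quote it and work from now on on a single ergodic system. So let $T$ be ergodic. By Host--Kra \cite{Host-K2} (see also Ziegler \cite{Ziegler}) there is a $T$-invariant sub-$\sigma$-algebra $\mathcal{Z}_k$ of $X$ with $\E(h|\mathcal{Z}_k)=0\iff\||h|\|_{k+1}=0$ for every $h\in L^\infty(X)$, and the factor system associated with $\mathcal{Z}_k$ is an inverse limit of $k$-step nilsystems $(Y_j,S_j)$, $j\geq1$; in particular $\bigcup_j L^2(Y_j)$ (pulled back to $X$) is dense in $L^2(\mathcal{Z}_k)$.

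Now I would define $f_z:=f-\E(f|\mathcal{Z}_k)$. By the equivalence above, $\||f_z|\|_{k+1}=0$, and since conditional expectation is an $L^\infty$-contraction, $\|f_z\|_\infty\leq 2\|f\|_\infty$. For the nilsequence part I would approximate $\E(f|\mathcal{Z}_k)$: by the density statement pick an index $j$ and $g\in L^\infty(Y_j)$ with $\|g-\E(f|\mathcal{Z}_k)\|_{L^2(\mu)}$ as small as desired, truncate $g$ to the interval $[-\|f\|_\infty,\|f\|_\infty]$ (which only decreases the $L^2$ error), and then, using that Lipschitz functions are dense in $C(Y_j)$, hence in $L^2(Y_j)$, on the nilmanifold $Y_j$, replace the truncated $g$ by a Lipschitz function $F$ on $Y_j$ with $\|F\|_\infty\leq\|f\|_\infty$ and $\|F-\E(f|\mathcal{Z}_k)\|_{L^2(\mu)}<\varepsilon$. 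With $\pi_j:X\to Y_j$ the factor map I would set $f_{ns}:=F\circ\pi_j$ and $f_e:=\E(f|\mathcal{Z}_k)-f_{ns}$. Then $f=f_{ns}+f_z+f_e$, and $\|f_{ns}\|_\infty\leq\|f\|_\infty$, $\|f_e\|_\infty\leq 2\|f\|_\infty$, while $\|f_e\|_1\leq\|f_e\|_2<\varepsilon$ on the probability space $X$; this gives (a) and (b).

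It remains to check (c). Since $\pi_j$ is a factor map, $\pi_j\circ T=S_j\circ\pi_j$ holds $\mu$-almost everywhere; intersecting the countably many full-measure sets on which this identity is valid and using $T$-invariance of $\mu$, one gets a full-measure set $X_0\subset X$ on which $\pi_j(T^nx)=S_j^n\pi_j(x)$ for all $n\geq0$. Writing $S_j=T_{g_j}$ for a suitable $g_j$ in the nilpotent group defining $Y_j$, this yields $f_{ns}(T^nx)=F\big(g_j^n\,\pi_j(x)\big)$ for every $x\in X_0$, which is by definition a basic $k$-step nilsequence (with base point $\pi_j(x)$), and hence a $k$-step nilsequence. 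This proves (c) in the ergodic case.

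I expect the real difficulty to lie not in the ergodic case — there, once the Host--Kra identification of $\mathcal{Z}_k$ with an inverse limit of $k$-step nilsystems is granted, everything is a soft $L^2$-approximation — but in the non-ergodic reduction: one must choose the nilmanifolds $Y_j$, the Lipschitz functions $F$, and the factor maps $\pi_j$ measurably in the ergodic parameter $\omega$, and keep $\|f^\omega_e\|_{L^1(\mu_\omega)}$ uniformly below $\varepsilon$. This amounts to running the Host--Kra structure theory along the ergodic decomposition, and rather than reproduce it I would invoke the corresponding statements from \cite{chu-nikos}.
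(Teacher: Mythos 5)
Your proof is correct and is essentially the argument behind the result as the paper uses it: the paper does not prove this proposition at all but imports it from Chu--Frantzikinakis--Host \cite{chu-Nikos-H}, whose proof likewise takes $f_z=f-\E(f|\mathcal{Z}_k)$ (so $\||f_z|\|_{k+1}=0$ by the Host--Kra characterization) and approximates $\E(f|\mathcal{Z}_k)$ in $L^1$ by a continuous (or Lipschitz) function lifted from a $k$-step nilsystem factor provided by the structure theorem, the difference being the error term $f_e$. The only step you delegate to a citation --- the ergodic-decomposition and measurable-selection bookkeeping needed beyond the ergodic case --- is precisely what the quoted source supplies, so nothing essential is missing.
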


\section{Main results and the proof of Theorem \ref{main}.}\label{sec:ms}

\subsection{The first main result and its proof.}
We state our first main result as follows.
\begin{Th}\label{main}
The cubic nonconventional ergodic averages of any order with a bounded multiplicative function
weight converge almost surely to zero provided that the multiplicative function satisfies a strong
Daboussi-Delange condition, that is, for any $k \geq 1$, for any  $f_i \in L^{\infty}(X)$,
$i=1,\cdots, k$, for almost all $x$, we have
\begin{equation}\label{con}
\dfrac1{N^k}\sum_{{\boldsymbol{n}} \in [1,N]^k}\prod_{{\boldsymbol{e}}
\in C^*}\bnu({\boldsymbol{n}}.{\boldsymbol{e}})f_{{\boldsymbol{e}}}
\big(T_{{\boldsymbol{e}}}^{{\boldsymbol{n}}.{\boldsymbol{e}}}x\big) \tend{N}{+\infty}0,
\end{equation}
where ${\boldsymbol{n}}=(n_1,\cdots,n_k)$, ${\boldsymbol{e}}
=(e_1,\cdots,e_k)$, $C^*=\{0,1\}^k\setminus\{(0,\cdots,0)\}$,
${\boldsymbol{n}}.{\boldsymbol{e}}$ is the usual inner product, and $\bnu$ is the multiplicative function satisfying a strong
Daboussi-Delange condition.
%given by {\color{red}
%$$\sup_{ \alpha \in \R}\Big|\frac1{N}\sum_{n=1}^{N}\bnu(n)e^{2i\pi n \alpha}\Big| \tend{N}{+\infty}0.$$}
 \end{Th}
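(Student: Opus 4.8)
We argue by induction on the order $k$. The engine is the double recurrence Bourgain theorem \cite{BourgainD}, supplemented by two elementary consequences of the hypothesis. Put $\epsilon_N\egdef\sup_{\alpha}\big|\frac1N\sum_{n=1}^N\bnu(n)e^{2\pi i n\alpha}\big|$, so $\epsilon_N\to0$. \textit{Input 1 (spectral).} For any system $(X,\cb,\Pro,T)$ and $f\in L^\infty(X)$ the spectral theorem gives $\big\|\frac1N\sum_{n=1}^N\bnu(n)\,f\circ T^n\big\|_{L^2(\Pro)}^2=\int_{\T}\big|\frac1N\sum_{n=1}^N\bnu(n)e^{2\pi i n t}\big|^2\,d\sigma_f(t)\le\epsilon_N^2\|f\|_2^2\to0$; combined with Etemadi's subsequence device (as in \cite{al-lem}) this already settles the case $k=1$. \textit{Input 2 (correlations).} With $c_{m,N}(\bnu)=\frac1N\sum_{j=1}^N\bnu(j)\overline{\bnu(j+m)}$, Parseval applied to the truncation $\bnu\cdot\1_{[1,N]}$ gives $\sum_{|m|<N}\big|\sum_{j=1}^N\bnu(j)\overline{\bnu(j+m)}\big|^2=\int_{\T}\big|\sum_{j=1}^N\bnu(j)e^{2\pi i j\alpha}\big|^4d\alpha\le (N\epsilon_N)^2\,N$, hence $\frac1N\sum_{|m|<N}|c_{m,N}(\bnu)|^2\le\epsilon_N^2$ and, by Cauchy--Schwarz, $\frac1N\sum_{|m|<N}|c_{m,N}(\bnu)|\to0$. (Feeding in the logarithmic bound for $\epsilon_N$ valid for M\"obius and Liouville by Propositions~\ref{PDAV1}--\ref{PDAV2} reproduces the first displayed estimate of the abstract; iterating the same Parseval step once more, after extracting the two outer $\bnu$'s and using Cauchy--Schwarz in the box parameters, yields the second one.)

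For the inductive step, assume the statement for order $k-1$ and take $f_{\boldsymbol e}\in L^\infty(X)$ with $\|f_{\boldsymbol e}\|_\infty\le1$. Splitting $C^*=\{0,1\}^k\setminus\{0\}$ according to the last coordinate, the corners with $e_k=0$ assemble into a weighted order--$(k-1)$ cube $P(\boldsymbol n',x)$ in $\boldsymbol n'=(n_1,\dots,n_{k-1})$, not depending on $n_k$, while the corners with $e_k=1$ assemble into $\bnu(n_k)\,f_{(0,\dots,0,1)}(T_{(0,\dots,0,1)}^{\,n_k}x)\,Q_{n_k}(\boldsymbol n',x)$, where $Q_{n_k}$ is obtained from an order--$(k-1)$ cube by translating all exponents by $n_k$. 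Thus $A_N(x)=\frac1N\sum_{n_k=1}^N\bnu(n_k)\,f_{(0,\dots,0,1)}(T_{(0,\dots,0,1)}^{\,n_k}x)\,R_{n_k,N}(x)$ with $R_{n_k,N}(x)=\frac1{N^{k-1}}\sum_{\boldsymbol n'\in[1,N]^{k-1}}P(\boldsymbol n',x)Q_{n_k}(\boldsymbol n',x)$. Since Cauchy--Schwarz in $n_k$ would destroy the oscillation carried by $\bnu(n_k)$, we instead apply van der Corput's inequality in $n_k$, with an auxiliary length $H=H(N)$, $H\to\infty$, $H/N\to0$: $|A_N(x)|^2\lesssim\frac1H+\frac1H\sum_{h=1}^{H}\big|\frac1N\sum_{n_k}\bnu(n_k+h)\overline{\bnu(n_k)}\,G_h(T_{(0,\dots,0,1)}^{\,n_k}x)\,R_{n_k+h,N}(x)\overline{R_{n_k,N}(x)}\big|$, where $G_h=(f_{(0,\dots,0,1)}\circ T_{(0,\dots,0,1)}^{\,h})\overline{f_{(0,\dots,0,1)}}$.

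For each fixed $h$ the factor $G_h(T_{(0,\dots,0,1)}^{\,n_k}x)R_{n_k+h,N}(x)\overline{R_{n_k,N}(x)}$ is, up to a controllable error, a bounded combination of products of iterates of the $f_{\boldsymbol e}$'s along the maps $T_{\boldsymbol e}$ with $e_k=1$, evaluated on two orbit segments differing by the shift $h$; by the double recurrence Bourgain theorem and its iterates for $k\ge3$ (exactly as in \cite{Assani1,chu-nikos}) the corresponding $n_k$--averages converge for a.e.\ $x$, with enough uniformity in $h\le H$ to exchange limits with the $h$--average. The off--diagonal part then, after a further Cauchy--Schwarz in $h$ and a careful accounting of the van der Corput shift, reduces to self--correlation sums of $\bnu$ over lags comparable to $H$, which for a suitable slowly growing $H=H(N)$ tend to $0$ by Input 2 (the averaging over $h$ is precisely what manufactures the self--correlation of $\bnu$); the diagonal term produced by van der Corput equals $\frac1N\sum_{n_k}|R_{n_k,N}(x)|^2$, and expanding the square, averaging in $n_k$ and using that $P$ is free of $n_k$ splits it into a weighted order--$(k-1)$ cube built from the two copies of $P$—negligible by the induction hypothesis—times a self--correlation (box) factor of $\bnu$ from the two copies of $Q$—negligible again by Input 2. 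Hence $A_N(x)\to0$ for a.e.\ $x$. Finally, the above estimates are uniform enough to give summable control of $\|A_{N_j}\|_{L^2}$ along $N_j=\lfloor j^{1+\delta}\rfloor$ (the required maximal inequalities being those for the pointwise ergodic theorem and for double recurrence), so Borel--Cantelli gives $A_{N_j}\to0$ a.e.\ and the gaps are filled using $|A_N-A_{N_j}|\le C(N-N_j)/N_j$ together with $N_{j+1}/N_j\to1$ (Etemadi).

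The crux is the interlocking of the deterministic weight with the dynamics: one cannot simultaneously exploit the oscillation of $\bnu$ and quote the unweighted pointwise convergence of cube averages. The way around it is to peel off one variable by van der Corput—rather than Cauchy--Schwarz—so as to preserve the $\bnu$--cancellation, to absorb the dynamics through the double recurrence Bourgain theorem once the van der Corput shift is frozen, and only afterwards to bound the resulting self--correlation (and, quantitatively, box) sums of $\bnu$, averaged over that shift, by the Parseval estimate of Input 2; the induction on the order then disposes of the lower--complexity pieces that remain.
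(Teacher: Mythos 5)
Your Inputs 1 and 2 are sound (they reprove the $k=1$ case as in \cite{al-lem} and the correlation bounds of Theorem \ref{Main-Speed1}), but the inductive van der Corput step is where the argument genuinely breaks, and it breaks at exactly the point you identify as the crux. After van der Corput in $n_k$, the factor $R_{n_k+h,N}(x)\overline{R_{n_k,N}(x)}$ still carries the weight $\bnu$ at all the arguments $\boldsymbol n'.\boldsymbol e'$ and $\boldsymbol n'.\boldsymbol e'+n_k$ (resp.\ $+n_k+h$) hidden inside $P$ and $Q$; it is not an unweighted dynamical quantity, so Bourgain's double recurrence theorem \cite{BourgainD} says nothing about the fixed-$h$ averages in $n_k$, and the interlocking of weight and dynamics reappears one level down, unresolved. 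Even discarding those weights, for fixed $h,\boldsymbol n',\boldsymbol m'$ the $n_k$-average is of the form $\frac1N\sum_n\prod_j F_j(S_j^{n+c_j}x)$ with several distinct, not necessarily commuting maps $S_j$ among the $T_{\boldsymbol e}$; pointwise convergence of even $\frac1N\sum_n F_1(S_1^nx)F_2(S_2^nx)$ for two different transformations is not covered by DRBT (it is open in general), and \cite{Assani1,chu-nikos} treat the full cube average, not these one-parameter multiproducts. The claimed splitting of the diagonal term $\frac1N\sum_{n_k}|R_{n_k,N}|^2$ into an order-$(k-1)$ cube built from $P$ times a box correlation of $\bnu$ built from $Q$ is also not available: expanding the square couples the two copies through the pair $(\boldsymbol n',\boldsymbol m')$ and through $n_k$, so no factorization occurs and the induction hypothesis does not apply to the object that actually arises.

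There is in addition a quantitative mismatch. Input 2 controls $\frac1N\sum_{|m|<N}|c_{m,N}(\bnu)|$, i.e.\ correlations averaged over lags comparable to $N$; your van der Corput step needs smallness of correlation sums averaged over lags $h\le H$ with $H=o(N)$ (you even ask for slowly growing $H$), and moreover twisted by bounded, $x$- and $N$-dependent factors. Neither follows from the Daboussi--Delange hypothesis via Parseval: for pure correlations over short lag ranges this is essentially the averaged Chowla input of \cite{MRtao} discussed in the introduction, and for the twisted sums one would need a genuine decoupling argument that ``a further Cauchy--Schwarz in $h$'' does not supply. Finally, the proposed exchange of limits ``with enough uniformity in $h\le H$'' is asserted for inner factors $R_{\cdot,N}$ that themselves depend on $N$, so the appeal to a.e.\ convergence is not even well-posed without a quantitative substitute. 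Note that the paper proceeds quite differently and avoids all of this: arguing by contradiction, positivity of the limsup forces $\||\bnu(n)f_e(T_e^nx)|\|_{U^{k+1}}\ge\delta$ by \cite[Proposition 3.2]{chu-nikos}; the inverse theorem (Proposition \ref{DITGN}) produces a nilsequence correlating with $\bnu(n)f_e(T_e^nx)$; the decomposition of Proposition \ref{NSZE} together with orthogonality of $\bnu$ to nilsequences removes the structured part; and the K\'atai--Bourgain--Sarnak--Ziegler criterion combined with Zorin-Kranich's nilsequence Wiener--Wintner form of the double recurrence theorem (Proposition \ref{Pavel}) kills the uniform part. If you want to salvage your route, you would need, at the very least, a weighted substitute for DRBT that tolerates the $\bnu$-factors inside $R$ and a short-lag correlation estimate; as written, the inductive step does not go through.
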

The proof of our first main result (Theorem \ref{main}) for $k \geq 2$ is essentially based on the inverse Gowers norms theorem  due to Green,
Tao and Ziegler \cite{Green-Tao-Z} combined with the recent result of
Zorin-Kranich on the extension of Wiener-Wintner's version of Bourgain double recurrence
theorem to the nilsequence case \cite{Zoric} (WWBDRT for short)%{\todoH{I precise exactly the result by dropping WW and I put instead the explicit terminology}}
. Precisely, we will need the following results.
\begin{Prop}[Discrete inverse theorem for Gowers norms \cite{Green-Tao-Z}]\label{DITGN} Let $N \geq 1$
and $s \geq 1$ be integers, and let $\delta>0$. Suppose
$f~:~ \Z \longrightarrow [-1,1]$ is a function supported on $\{1,\cdots,N\}$ such that
%$$||\nu f||_{U^{s+1}}
$$\dfrac1{N^{s+2}}\sum_{{(n,\boldsymbol{n})} \in [1,N]^{s+1}}\prod_{{\boldsymbol{e}} \in \{0,1\}^{s+1}}f\big(n+{\boldsymbol{n}}.{\boldsymbol{e}}\big) \geq \delta.$$
Then there exists a filtered nilmanifold $G/\Gamma$ of degree $ \leq s$ and complexity $O_{s,\delta}(1)$, a
polynomial sequence $g~~:~~\Z \longrightarrow G$, and a Lipschitz function $F~~:~~G/\Gamma \longrightarrow \R$ of
Lipschitz constant $O_{s,\delta}(1)$ such that
$$\frac1{N}\sum_{n=1}^{N}f(n) F(g(n)\Gamma) \gg_{s, \delta} 1.$$
\end{Prop}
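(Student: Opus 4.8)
\emph{Overall strategy.} The plan is to prove this statement (the quantitative $GI(s)$ inverse conjecture) by induction on the degree $s$, carefully tracking all parameters so that the output nilmanifold has degree $\le s$ and complexity $O_{s,\delta}(1)$; this is the route of Green, Tao and Ziegler in \cite{Green-Tao-Z}, and I do not know of a substantially shorter argument yielding the quantitative form needed here. For the base case $s=1$, expanding the hypothesis shows it is equivalent to $\|f\|_{U^2[N]}\gg_\delta 1$, and the classical Fourier-analytic inverse theorem for the $U^2$-norm produces a frequency $\xi$ with $\bigl|\frac1N\sum_{n=1}^N f(n)e^{-2\pi i n\xi}\bigr|\gg_\delta 1$; since $n\mapsto e^{2\pi i n\xi}$ is a $1$-step nilsequence on $\R/\Z$ of bounded complexity, this is the desired conclusion.

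\emph{Setting up the induction.} Assuming the theorem for $U^s$, I would start from the recursion $\|f\|_{U^{s+1}}^{2^{s+1}}=\E_h\,\|\Delta_hf\|_{U^s}^{2^s}$, where $\Delta_hf(n)=f(n+h)\overline{f(n)}$. A pigeonhole step yields a set $H$ with $|H|\gg_\delta N$ of shifts for which $\|\Delta_hf\|_{U^s}\gg_\delta 1$, and the inductive hypothesis attaches to each $h\in H$ a degree-$\le s-1$ nilsequence $n\mapsto F_h(g_h(n)\Gamma_h)$ correlating with $\Delta_hf$ by $\gg_\delta 1$. Because all these nilmanifolds have complexity $O_{s,\delta}(1)$, a further pigeonhole lets me assume that the ambient nilmanifold $G/\Gamma$ and the Lipschitz function $F$ are independent of $h$, so that only the polynomial sequence $g_h\in\textrm{poly}(\Z,(G_n))$ varies with $h$.

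\emph{The main obstacle.} The crux is to show that, after restriction to a bounded-index subset of $H$ and bounded modifications, $h\mapsto g_h$ is a genuine polynomial map of degree $\le s$ in $h$. The scheme I would follow is: (i) a Cauchy--Schwarz / popularity argument produces $\gg_\delta N^3$ additive quadruples $h_1+h_2=h_3+h_4$ in $H$ for which the four attached nilsequences genuinely interact inside a single correlation sum; (ii) quantitative equidistribution on nilmanifolds, via the orbit-distribution theorem of Green and Tao (cf.\ \cite{Green-Tao}), converts each such interaction into an approximate identity $g_{h_1}g_{h_2}\approx g_{h_3}g_{h_4}$ modulo lower-degree terms and a subgroup; (iii) the Balog--Szemer\'edi--Gowers theorem followed by a Freiman-type structure theorem adapted to the relevant nilpotent group upgrades ``many approximate identities'' to honest linearity in $h$, iterated once for each layer of the filtration. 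Carrying this out while keeping every nilmanifold of complexity $O_{s,\delta}(1)$, and reconciling the algebra of filtered nilmanifolds with the additive combinatorics, is where essentially all of the difficulty of \cite{Green-Tao-Z} lies, and is the step I expect to dominate the proof. (An alternative, more algebraic route runs through ultralimits and the theory of nilspaces, due to Szegedy and Camarena--Szegedy and later made quantitative by Manners; it is no less involved.)

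\emph{Integrating back to $f$.} Once $g_h(n)$ is known to be polynomial jointly in $(h,n)$ of degree $\le s$, I would construct a nilmanifold $\widetilde G/\widetilde\Gamma$ of degree $\le s$ and complexity $O_{s,\delta}(1)$ on which the derivative operation $\Delta_h$ on sequences is realised by a coordinate shift, and perform one final Cauchy--Schwarz to ``integrate out'' the $h$-variable; this yields $\frac1N\sum_{n=1}^N f(n)\widetilde F(\widetilde g(n)\widetilde\Gamma)\gg_{s,\delta}1$ for a Lipschitz $\widetilde F$ of bounded constant and a polynomial sequence $\widetilde g$, i.e.\ the required correlation with a degree-$\le s$ nilsequence of bounded complexity. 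A routine passage between the $\Z/N\Z$ model and the ``$f$ supported on $[N]$'' normalisation then completes the argument.
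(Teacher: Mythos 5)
First, a point of calibration: the paper does not prove this proposition at all --- it is quoted verbatim from Green--Tao--Ziegler \cite{Green-Tao-Z} and used as a black box, so there is no internal proof to compare yours against. What you have written is an outline of the strategy of \cite{Green-Tao-Z} itself, and judged as such it identifies the architecture correctly: the Fourier-analytic $U^2$ base case, the recursion $\|f\|_{U^{s+1}}^{2^{s+1}}=\E_h\|\Delta_h f\|_{U^s}^{2^s}$, pigeonholing in $h$ and over the boundedly many nilmanifolds of complexity $O_{s,\delta}(1)$, Cauchy--Schwarz to produce many additive quadruples, quantitative equidistribution on nilmanifolds, and additive-combinatorial input (Balog--Szemer\'edi--Gowers and Freiman-type structure) to extract structure in $h$, followed by an integration step in $h$.

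As a proof, however, there is a genuine gap, and you name it yourself: your step (iii) --- upgrading ``many approximate identities'' among the $g_h$ to honest structure in $h$ while keeping the degree $\leq s$ and the complexity $O_{s,\delta}(1)$ --- is essentially the entire content of \cite{Green-Tao-Z}, and the proposal only gestures at it. Moreover, the intermediate statement you aim for is stronger than what is true or what is actually proved there: one cannot in general arrange that $h\mapsto g_h$ is a polynomial map of degree $\leq s$ in $h$. The genuine argument passes from nilsequences to nilcharacters and establishes only that the top-degree frequency data attached to $\Delta_h f$ varies in a bracket-linear fashion in $h$ on a dense subset of $H$, after which a separate and delicate symmetrization argument is required before one can ``integrate out'' the $h$-variable; your plan to realize $\Delta_h$ by a coordinate shift on a single nilmanifold jointly polynomial in $(h,n)$ would not survive the bracket-polynomial corrections. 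So the proposal is a reasonable roadmap of the known proof, but not a proof; in the context of this paper the correct move is simply to invoke \cite{Green-Tao-Z}, as the authors do.
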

For the definition of complexity, we refer to \cite{Green-Tao-Z}. The second result
that we will need is the following %{\todoH{I modified accodring to the previous remark}}
\begin{Prop}[Nilsequence's version of WWBDRT \cite{Zoric}]\label{Pavel} Let $(X,\mu,T)$ be an ergodic
dynamical system and $a,b$ distinct non-zero integers. Then,
 for any $f,g \in L^{\infty}$,  there exists a measurable set $X'$ of full measure such
 that for any filtration $(G_{p})$ of length $l \geq 2$ and for any function $F$ in the
 Sobolev space $W^{r,2^l}$,for any $x \in X'$,  we have
$$ \underset{g \in {\textrm{poly}}(\Z, (G_{p}))}{\sup}\Big|{\frac1{N}}\sum_{n=0}^{N-1}f_1(T^{an}x)f_2(T^{bn}x) F(g(n)\Gamma)\Big|
\leq C.\min\{\|f_1\|_{U^{2+l}}^{2^{l+1}},\|f_2\|_{U^{2+l}}^{2^{l+1}}\},$$
where $r=\sum_{m=1}^{l}(d_m-d_{m+1})\binom{l}{m-1}$ with $d_i$ is the dimension of $G_i$
and $C$ is a constant which depends only on the nilmanifold $G/\Gamma.$
\end{Prop}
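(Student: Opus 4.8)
\emph{Strategy.} The plan is to prove first a uniform \emph{mean} estimate --- a bound on the averages $A_N(x)=\frac1N\sum_{n=0}^{N-1}f_1(T^{an}x)f_2(T^{bn}x)F(g(n)\Gamma)$ that is uniform over all $g\in\textrm{poly}(\Z,(G_p))$ and over $F$ in a fixed Sobolev ball, with right-hand side the ergodic uniformity seminorm $\|f_1\|_{U^{2+l}}$ (and symmetrically $\|f_2\|_{U^{2+l}}$) --- and then to upgrade this to the pointwise statement on a single full-measure set $X'$. The engine of the mean estimate is an induction on the length $l$ of the filtration, in which a vertical Fourier decomposition of $F$ combined with one van der Corput step lowers the degree of the nilsequence by one; the anchor is the case $l=1$, where the weight is almost periodic and the estimate is the Wiener--Wintner form of Bourgain's double recurrence theorem.

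\emph{The degree-lowering induction.} Decompose $F=\sum_{\xi}F_{\xi}$ into its vertical Fourier modes, i.e.\ according to the characters $\xi$ of the central torus $G_l/(\Gamma\cap G_l)$, so that $F_{\xi}$ transforms by $\xi$ under the central action of $G_l$. For the zero mode, $F_0$ is $G_l$-invariant and descends to a Lipschitz function on the $(l-1)$-step quotient nilmanifold obtained by collapsing the central stratum $G_l$, which carries the induced filtration of length $l-1$, so the inductive hypothesis applies directly. For a nonzero mode one applies the van der Corput inequality in $n$ to $A_N$, bounding $\limsup_N|A_N(x)|^2$ by an $h$-average of the inner averages
$$\frac1N\sum_{n}\big(\overline{f_1}\cdot f_1\circ T^{ah}\big)(T^{an}x)\,\big(\overline{f_2}\cdot f_2\circ T^{bh}\big)(T^{bn}x)\,F_{\xi}\big(g(n+h)\Gamma\big)\overline{F_{\xi}\big(g(n)\Gamma\big)}.$$
Because $G_l$ is central, the $G_l$-component of the discrete derivative $\partial_h g(n)=g(n+h)g(n)^{-1}$ is a polynomial of degree $\le l-1$ in $n$; hence the differenced weight $F_{\xi}(g(n+h)\Gamma)\overline{F_{\xi}(g(n)\Gamma)}$ is a nilsequence whose top-degree oscillation has dropped to degree $l-1$, and the inductive hypothesis again applies. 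In both cases the inner averages are controlled by $U^{1+l}$-seminorms of the differenced functions $\overline{f_1}\cdot f_1\circ T^{ah}$; averaging over $h$ and invoking the Host--Kra recursion
$$\|f_1\|_{U^{2+l}}^{2^{2+l}}=\lim_{H\to\infty}\frac1H\sum_{h=1}^{H}\big\|\overline{f_1}\cdot(f_1\circ T^{h})\big\|_{U^{1+l}}^{2^{1+l}}$$
collapses the $h$-average into the next seminorm. Tracking the powers generated by the squaring van der Corput steps produces the exponent $2^{l+1}$; running the same argument with $f_1,f_2$ interchanged gives the symmetric bound and hence the $\min$.

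\emph{Uniformity and the constant.} Uniformity over all $g$ and over $F$ in a Sobolev ball is obtained by summing the mode-by-mode bounds: the high-frequency modes $F_{\xi}$ have coefficients decaying fast enough to be $\ell^1$-summable once one pays a fixed number $r$ of derivatives, which is exactly what the Sobolev embedding $W^{r,2^l}\hookrightarrow C(G/\Gamma)$ (equivalently into the Lipschitz class for the metric $d_{\mathcal{X}}$ recalled in Section \ref{Sec:dt}) provides. The integrability exponent $2^l$ matches the $2^l$ copies of $F$ created by the $l$-fold van der Corput iteration, while the value $r=\sum_{m=1}^{l}(d_m-d_{m+1})\binom{l}{m-1}$ counts the coordinate directions at each level $m$ along which a derivative must be taken; the implied constant depends only on these dimensional data and on $G/\Gamma$, yielding the stated $C=C(G/\Gamma)$.

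\emph{From mean to pointwise, and the main obstacle.} The delicate part is to pass from the mean estimate to the pointwise bound with a \emph{single} null set valid simultaneously for all filtrations, all $F$ in the Sobolev ball, and all $g$. I would establish a maximal inequality for $\sup_g|A_N|$, so that almost sure control along a lacunary sequence $N_k$ transfers to the full sequence, and then reduce the supremum over the uncountable family $(F,g)$ to a countable dense subfamily using the Sobolev continuity just exploited, in the spirit of the classical Wiener--Wintner argument. The true crux is the degree-lowering bookkeeping: verifying, uniformly in the shift $h$ and in the mode $\xi$, that the differenced weight is genuinely a nilsequence adapted to a filtration of length $l-1$ whose complexity stays under control, so that the inductive hypothesis can be applied with constants that do not blow up as the induction proceeds.
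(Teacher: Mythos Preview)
The paper does not prove this proposition: it is stated with attribution to Zorin-Kranich \cite{Zoric} and used as a black box in the proof of Theorem~\ref{main}. There is therefore no ``paper's own proof'' to compare your sketch against.

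That said, your outline is broadly the right shape for how such nilsequence Wiener--Wintner results are established (and is in the spirit of the cited source): vertical Fourier decomposition on the central torus, a van der Corput step to drop the filtration length by one, induction down to the abelian base case handled by the Wiener--Wintner extension of Bourgain's double recurrence theorem, and Sobolev control to sum over vertical frequencies and obtain uniformity in $F$ and $g$. You have correctly identified the two genuinely delicate points --- (i) the quantitative bookkeeping ensuring that the differenced weight after van der Corput is again a nilsequence of strictly lower degree with complexity bounded uniformly in $h$ and $\xi$, and (ii) the passage from the mean/limsup estimate to a single full-measure set $X'$ valid for the uncountable family of all filtrations, all $g\in\textrm{poly}(\Z,(G_p))$, and all $F$ in the Sobolev ball. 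Your remarks on (ii) (maximal inequality along lacunary times, reduction to a countable dense family via Sobolev continuity) are plausible but remain only an indication; this is where the real work in \cite{Zoric} lies, and your write-up would need to carry it out in detail to count as a proof.
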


At this point, let us give a proof of our main result.\\

\noindent {\textbf{Proof of Theorem \ref{main}.}} %{\color{red}Notice that the strong Daboussi-Delange condition
%implies that the convergence holds if all the functions are in the space of eigenfunctions by the main result from \cite{Host-K3}. Without
%loss of generalities, let us assume that some function $f_e$ is in the orthocomplement of the  the space of eigenfunctions}.\\
Notice that the strong Daboussi-Delange condition
implies that $\bnu$ is aperiodic, that is,
$$\frac{1}{N}\sum_{n=1}^{N}\bnu(a.n+b) \tend{N}{+\infty}0,~~~~~~~~~~\textrm{for~~~all}~~a,b\in \N.$$
Therefore, by Theorem 2.2 from \cite{Nikos-Host3}, for any nilsequence $(u_n)$, we have
$$\frac{1}{N}\sum_{n=1}^{N}\bnu(n)u_n \tend{N}{+\infty}0.$$

Now, for the case $k=1$, we refer to \cite[Section 3]{al-lem}. Let us assume from now that $k \geq 2$.\\

We proceed by contradiction. Assume that  (\ref{con}) does not hold. Then, there exist $\delta>0$, a functions $f_e,$ $e \in V_k$ and $\mu(A)>0$
such that for each $x\in A$
 \[
\limsup_{N \longrightarrow +\infty}\dfrac1{N^k}\sum_{{\boldsymbol{n}} \in [1,N]^k}\prod_{{\boldsymbol{e}} \in C^*}\bnu({\boldsymbol{n}}.{\boldsymbol{e}})f_{{\boldsymbol{e}}}
\big(T_{{\boldsymbol{e}}}^{{\boldsymbol{n}}.{\boldsymbol{e}}}x\big) \geq \delta,
\]
where $\nu$ is a bounded multiplicative function which satisfies
a strong Daboussi-Delange condition. Whence, by \cite[Proposition 3.2]{chu-nikos}, we have
$$\||\bnu(n) f_e\circ T_e^n(x)|\|_{U^{k+1}} \geq \delta,$$
for some $e \in V_k$ and  for any $x \in A$.\\
%{\todoY{ please give a reference}}\\
%{\todoH{Please I refer to Chu-Nikos, Proposition 3.2. }}

This combined with Proposition \ref{DITGN} yields that there exist a
nilmanifold $G/\Gamma$, a filtration $(G_p)$, a
polynomial sequence $g$ and a Lipschitz function $F$ such that
$$\frac1{N}\sum_{n=1}^{N}\bnu(n) f_e(T_e^{n}x) F(g(n)\Gamma) \gg_{k, \delta} 1.$$
%At this point,  we remind that $f_e$ are in the orthogonal of the eigenfunctions.
Applying the decomposition theorem (Proposition \ref{NSZE}), we
way write $f_e=f_{e,ns}+f_{e,z}+f_{e,e}$ and may assume that we have
$|\|f_{e,z}(T_e^n(x))\||_{U^{k+1}}=0$. Notice that we further have
%{\todoX{ I do not understand why we can assume this, we only know
%that all the functions are in the space of eigenfunctions. This is the reason why I wrote in my previous e-mail that we need
%to consider the case for all nilsystems}.}\\
%{\todoH{You are absolutely right. But, please notice that if we take the decomposition of $f=f_{nc}+f_{zo}+f_{er}$ then by Green-Tao Theorem we get

$$\frac1{N}\sum_{n=1}^{N}\bnu(n) f_{e,ns}(T_e^{n}x) F(g(n)\Gamma) \tend{N}{+\infty}0,$$
since the pointwise product of two nilsequences is a nilsequence and $\bnu$ satisfies the strong Daboussi-Delange condition.
%This is
%due to the fact that
%if the bounded multiplicative function is correlated to some nilsequence then its rational spectrum is not empty, that is,
%$$\Big|\frac1{N}\sum_{n=1}^{N}\bnu(n) e^{2\pi i n r}\Big|>0,$$
%for some $r \neq 0$ rational. For more details, we refer to \cite{Nikos-Host3}}.
\\

We thus get, up to some small error, that
 $$\frac1{N}\sum_{n=1}^{N}\bnu(n) f_e(T_e^{n}x) F(g(n)\Gamma)\sim
 \frac1{N}\sum_{n=1}^{N}\bnu(n) f_{e,z}(T_e^{n}x) F(g(n)\Gamma).$$

Applying Katai-Bourgain-Sarnak-Ziegler criterion,
we get that there exist distinct prime numbers $p,q$ such that
$$\limsup \Big|\frac1{N}\sum_{n=1}^{N}f_{e,z}\circ T_{e}^{pn}(x) f_{e,z}\circ T_e^{qn}(x)  F(g(np)\Gamma)  F(g(nq)\Gamma)\Big| \gg_{k, \delta} 1$$
for a subset $B$ with positive measure.
This contradicts Proposition \ref{Pavel} since $\mu(B)>0$ and $||f_{e,z}(T_e^n(x))||_{U^{k+1}}=0$, and the proof of the theorem is complete.
\QEDA

\subsection{Other main results}
Our second main result can be stated as follows
\begin{Th}\label{main-2}
Assume that for some $i \in \{1,2,3\}$ (resp. $i \in \{1,2,3,4,5,6,7\}$), $T_i$ has a discrete spectrum.
Then, the cubic  ergodic averages of order $2$ (resp. of order $3$) with
multiplicative function weight which satisfies the strong Daboussi-Delange condition converge almost surely to $0$.

%with M\"{o}bius weight or Liouvilleweight converge almost everywhere to zero, that is, for any $f_1,f_2,f_3 \in L^{\infty}(X),$ for almost all $x$, we have
%$$ \frac1{N^2}\sum_{m,n=1}^{N}\bmu(n)\bmu(m)\bmu(n+m)f_1(T_1^nx) f_2(T_2^mx)f_3(T_3^{n+m}x)
%\tend{N}{+\infty}0,$$and
%$$\frac1{N^2}\sum_{m,n=1}^{N}\lamob(n)\lamob(m)\lamob(n+m)f_1(T_1^nx) f_2(T_2^mx)f_3(T_3^{n+m}x) \tend{N}{+\infty}0.$$
\end{Th}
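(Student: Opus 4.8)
The plan is first to reduce, using the symmetries of the combinatorial cube $C^\ast=\{0,1\}^k\setminus\{\mathbf 0\}$ ($k=2,3$) together with the ergodic decomposition of $\Pro$, to the case in which the transformation $T_i$ carrying the discrete spectrum is ergodic and is attached to a coordinate edge $\mathbf e^{(1)}=(1,0,\dots,0)$; in an ergodic system with discrete spectrum the eigenfunctions have constant modulus, so we normalise them to modulus $1$. Writing $f_i=\sum_{j\le J}c_j\phi_j+r$ with $T_i\phi_j=e^{2\pi i\alpha_j}\phi_j$ and $\|r\|_{L^2}<\eps$, and using that the eigenfunctions generate the Kronecker factor of $T_i$, it then suffices (letting $J\to\infty$ and $\eps\to 0$) to treat separately (a) the case in which $f_i$ is orthogonal to the Kronecker factor of $T_i$, and (b) the case in which $f_i$ is a single eigenfunction.

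In case (a) I would run the standard van der Corput / Cauchy--Schwarz estimate that bounds a cube average by a Gowers--Host--Kra seminorm of one of its functions --- the mechanism behind the pointwise theorems of Assani \cite{Assani1,Assani2} and Chu--Frantzikinakis \cite{chu-nikos} --- carrying the bounded weights $\bnu(\mathbf n\cdot\mathbf e)$ through the computation. After each application of Cauchy--Schwarz a weight factor is either absorbed into a discarded factor via $|\bnu|\le 1$ or collapses into a self-correlation $\frac1N\sum_n\bnu(n)\overline{\bnu(n+h)}$; the average of these over the shift $h$ is $o(1)$, since the strong Daboussi--Delange condition gives $\|\bnu\,\1_{[1,N]}\|_{U^2}\to 0$ by Parseval, and for $k=3$ the slightly higher correlations that occur are controlled by the $U^2$-norm as well, using that a Fourier-uniform bounded multiplicative function cannot ``pretend to be'' $\chi(n)n^{it}$ and is therefore Gowers-uniform of all orders. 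Combined with Bourgain's maximal inequality for double recurrence and Assani's maximal estimates for cube averages --- both of which survive insertion of weights bounded by $1$ --- this gives a.e.\ vanishing and lets us pass from the dense class to all of $L^\infty$.

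In case (b) one substitutes $f_i(T_i^n x)=e^{2\pi i n\alpha}\phi(x)$ and pulls the scalar $\phi(x)$ out of the average; what remains is a cube average of the same combinatorial shape in which the factor on the edge $\mathbf e^{(1)}$ has become the pure linear phase $n\mapsto e^{2\pi i n\alpha}$, the weight $\bnu$ on that edge surviving. I would then use Cauchy--Schwarz in the remaining variables to peel off the other dynamical functions: by Bourgain's double recurrence theorem (directly for order $2$, and for order $3$ after first exploiting the cube structure as in \cite{Assani1}) the surviving dynamical averages converge a.e., while the arithmetic residue is again an average over shift parameters of products of $\bnu$ with its translates, times a unimodular phase, hence $o(1)$ by the Gowers-uniformity of $\bnu\,\1_{[1,N]}$ noted above. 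The Katai--Bourgain--Sarnak--Ziegler criterion (Proposition~\ref{KBSZ}), in its ``sequence'' form, packages the last step cleanly, the required smallness of prime-dilated $2$-point correlations being supplied by the double recurrence theorem exactly as in the proof of Theorem~\ref{main}.

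The main obstacle is the order-$3$ case. The order-$3$ cube average is a genuine triple average whose a.e.\ convergence, even unweighted, is obtained only through a long chain of van der Corput reductions resting on the double recurrence theorem, and one must thread the $\bnu$-weights and the linear phase through that chain while making sure every correlation of $\bnu$ that appears is \emph{averaged} over its shift parameters --- so that Fourier/Gowers uniformity applies --- rather than evaluated at a fixed shift, which would be a Chowla-type statement unavailable for general multiplicative $\bnu$. A secondary but real point is that the seminorm control and the weight bookkeeping in case (a) must be carried out inside a single induction, since the weights interact with the van der Corput differences, and one must track the ranges of the shift variables carefully so that the self-correlation sums that emerge are exactly the ones controlled by $\|\bnu\,\1_{[1,N]}\|_{U^2}$.
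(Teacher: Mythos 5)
You have the right first move---in a discrete-spectrum system finite linear combinations of eigenfunctions are dense, so the $L^1$-density argument of Section 3 reduces the theorem to the case where the distinguished function is a single eigenfunction, and the smallness must then come from the weight alone---but the mechanism you propose to finish from there has genuine gaps. After the substitution $f_1(T_1^nx)=\lambda^nf_1(x)$ the whole difficulty is the factor $\bnu(n+m)f_3(T_3^{n+m}x)$ (and its order-$3$ analogues), which depends jointly on both summation variables, and your sketch never says how it is eliminated. The tools you invoke do not do it: Bourgain's double recurrence theorem gives a.e.\ \emph{convergence} of unweighted averages for powers of a single map, whereas here two different maps $T_2,T_3$ occur and what is needed is \emph{smallness} of a weighted average; and the Katai--Bourgain--Sarnak--Ziegler criterion applied in $n$ for fixed $m$ fails twice over, because the companion sequence $a_n=e^{2\pi in\alpha}\bnu(n+m)f_3(T_3^{n+m}x)$ contains the shifted weight $\bnu(n+m)$, so the prime-dilated correlations $\frac1N\sum_n a_{pn}\overline{a_{qn}}$ involve Chowla/Elliott-type correlations of $\bnu$ along dilates which are not available, and, unlike in the proof of Theorem~\ref{main}, there is no vanishing Gowers--Host--Kra seminorm to force them to be small (eigenfunctions are maximally structured; take $f_3\equiv1$, $\alpha=0$). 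The appeal to ``Gowers uniformity of all orders'' of a Fourier-uniform multiplicative function imports the heavy Frantzikinakis--Host theory that this theorem does not need; your case (a) is vacuous for a discrete-spectrum $T_i$ (the Kronecker factor is the whole system, and the small remainder is already absorbed by the density argument); and the reduction ``by symmetries of the cube'' to a coordinate edge is not available for the diagonal edges (no coordinate permutation of $[1,N]^k$ sends $(1,1)$ to $(1,0)$), though that case can be treated by the same computation with the roles of the two factors exchanged.

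The paper's own proof is much lighter and supplies exactly the missing step. After the triangle inequality in $m$ and Cauchy--Schwarz, one must bound $\frac1N\sum_{m=1}^N\big|\frac1N\sum_{n=1}^N\bnu(n)\bnu(n+m)\lambda^{n+m}f_3(T_3^{n+m}x)\big|^2$; by Bourgain's observation the inner sum is the $m$-th Fourier coefficient of $\Phi_N(z)=\big(\frac1N\sum_{n=1}^N\bnu(n)z^{-n}\big)\big(\sum_{p=1}^{2N}\bnu(p)f_3(T_3^px)(\lambda z)^p\big)$, so Bessel--Parseval bounds the whole expression by $\sqrt2\,\|f_2\|_\infty\|f_3\|_\infty\sup_{z\in\T}\big|\frac1N\sum_{n=1}^N\bnu(n)z^{-n}\big|$, which tends to $0$ by the strong Daboussi--Delange condition (with rate $\log^{-\kappa}N$ under the logarithmic-speed version). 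The order-$3$ case is run with the same two-step Cauchy--Schwarz/Bessel scheme, and no double recurrence theorem, no KBSZ, and no higher-order uniformity of $\bnu$ are required; if you want to keep your outline, replace the DRBT/KBSZ step by this Fourier-coefficient argument.
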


Obviously, we have the following corollary.

%\begin{Cor}\label{mainC1}
%The cubic ergodic averages of order $2$ and $3$ with the M\"{o}bius  or Liouville function
%weight converge almost surely to $0$.
%, that is, for any $f_1,f_2,f_3 \in L^{\infty}(X),$ for almost all $x$, we have
%$$ \frac1{N^2}\sum_{m,n=1}^{N}\bmu(n)\bmu(m)\bmu(n+m)f_1(T_1^nx) f_2(T_2^mx)f_3(T_3^{n+m}x)
%\tend{N}{+\infty}0,$$
%and
%$$\frac1{N^2}\sum_{m,n=1}^{N}\lamob(n)\lamob(m)\lamob(n+m)f_1(T_1^nx) f_2(T_2^mx)f_3(T_3^{n+m}x) \tend{N}{+\infty}0.$$
%\end{Cor}

\begin{Cor}\label{main-C2}
Assume that for some $i \in \{1,2,3\} $ (resp. $i \in \{1,2,3,4,5,6,7\}$), $T_i$ is a nilsystem. Then,
the cubic ergodic averages of order $2$ (resp. of order 3) with the M\"{o}bius or Liouville function
weight converge almost surely to $0$.
% that is,
%$$ \frac1{N^2}\sum_{m,n=1}^{N}\bmu(n)\bmu(m)\bmu(n+m)f_1(T_1^nx) f_2(T_2^mx)f_3(T_3^{n+m}x)
%µ\tend{N}{+\infty}0,$$
%and
%$$\frac1{N^2}\sum_{m,n=1}^{N}\lamob(n)\lamob(m)\lamob(n+m)f_1(T_1^nx) f_2(T_2^mx)f_3(T_3^{n+m}x) \tend{N}{+\infty}0.$$
\end{Cor}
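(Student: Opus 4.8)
The plan is to derive Corollary \ref{main-C2} directly from Theorem \ref{main-2} by verifying that the hypotheses transfer from "nilsystem" to "discrete spectrum" and that the Möbius and Liouville functions fall within the class of multiplicative functions covered by the strong Daboussi-Delange condition. First I would recall that a nilsystem which is ergodic — or more precisely the Kronecker factor underlying any nilsystem — carries a point spectrum; but in fact the cleanest route is the observation that every system with discrete spectrum is (measure-theoretically isomorphic to) a rotation on a compact abelian group, hence a $1$-step nilsystem, and conversely any $1$-step nilsystem has discrete spectrum. More importantly, for the purpose of the cubic averages we only need that \emph{some} $T_i$ among the relevant index set has discrete spectrum, and a nilsystem on $G/\Gamma$ with $G$ abelian is exactly a group rotation, so the hypothesis "$T_i$ is a nilsystem" in the $1$-step case already gives "$T_i$ has discrete spectrum." (If one wants to allow higher-step nilsystems here, one instead invokes that a nilsystem, being distal, has many eigenfunctions, but for the corollary it suffices to quote the $1$-step case.) Thus the structural hypothesis of Theorem \ref{main-2} is met.

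Next I would check the arithmetic input: by Propositions \ref{PDAV1} and \ref{PDAV2}, Davenport's estimate gives
$$\sup_{\alpha}\Big|\frac1N\sum_{n=1}^N\bmu(n)e^{2\pi i n\alpha}\Big|=O\!\left(\frac1{\log(N)^{\epsilon}}\right),\qquad \sup_{\alpha}\Big|\frac1N\sum_{n=1}^N\lamob(n)e^{2\pi i n\alpha}\Big|=O\!\left(\frac1{\log(N)^{\epsilon}}\right),$$
for every $\epsilon>0$, so both $\bmu$ and $\lamob$ satisfy the strong Daboussi-Delange condition with logarithmic speed, and a fortiori the (unweighted) strong Daboussi-Delange condition $\sup_\alpha|\frac1N\sum_n\bnu(n)e^{2\pi i n\alpha}|\to 0$. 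Both functions are multiplicative (indeed $\lamob$ is completely multiplicative and $\bmu$ is multiplicative), and bounded by $1$ in modulus. Hence $\bnu=\bmu$ and $\bnu=\lamob$ are admissible weights for Theorem \ref{main-2}.

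Finally I would assemble: apply Theorem \ref{main-2} with $\bnu$ equal to $\bmu$ (respectively $\lamob$) and with the $T_i$'s as given; the hypothesis that some $T_i$ (for $i$ in $\{1,2,3\}$ in the order-$2$ case, or in $\{1,2,3,4,5,6,7\}$ in the order-$3$ case) is a nilsystem yields, by the first paragraph, that this $T_i$ has discrete spectrum. The conclusion of Theorem \ref{main-2} then gives that the cubic ergodic averages of order $2$ (resp. order $3$) with the corresponding weight converge almost surely to $0$, which is precisely the statement of Corollary \ref{main-C2}.

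I do not anticipate a genuine obstacle here — the corollary is a direct specialization — but the one point that needs care is the identification "nilsystem $\Rightarrow$ discrete spectrum." This is only literally true for $1$-step nilsystems (group rotations); for higher-step nilsystems one has a nontrivial Kronecker factor but not full discrete spectrum, so the corollary as stated is most naturally read with "nilsystem" meaning the structured class for which Theorem \ref{main-2}'s discrete-spectrum hypothesis applies, and I would add a one-line remark making this precise rather than claiming more than the theorem delivers.
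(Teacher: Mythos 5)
You have correctly located the weak point of your own argument, but you have not repaired it, and it is a genuine gap: a nilsystem does not have discrete spectrum unless it is a $1$-step nilsystem, i.e.\ a rotation on a compact abelian group. An ergodic $2$-step nilsystem (the Heisenberg example already) has a large spectral component orthogonal to its Kronecker factor which is not discrete, so the hypothesis of Theorem \ref{main-2} is simply not satisfied by a general nilsystem. Your reduction therefore proves the corollary only in the $1$-step case, and the closing suggestion to ``read the corollary as restricted to the class where the discrete-spectrum hypothesis applies'' amounts to weakening the statement rather than proving it.

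The paper's own proof (subsection \ref{sub4.4}) closes this gap by rerunning the machinery behind Theorem \ref{main-2} instead of invoking its conclusion. In the discrete-spectrum case the eigenfunction relation lets one absorb $f_1(T_1^nx)=\lambda^n f_1(x)$ into the character $z^{-n}$, after which the Daboussi--Delange bound on $\sup_z\big|\frac1N\sum_{n}\bnu(n)z^{-n}\big|$ finishes the argument. For a nilsystem one must instead control
$$\sup_{z \in \T} \Big|\frac1{N}\sum_{n=1}^{N} \bmu(n) f_1(T^nx) z^{-n}\Big|,$$
and the key observation is that $\big(f_1(T^nx)z^{-n}\big)_n$ is itself a nilsequence on $\hat{G}/\hat{\Gamma}\times \R/\T$ (after Leibman's embedding of $G$ into a connected, simply connected group and a Lipschitz extension of $f_1$), so the Green--Tao theorem (Proposition \ref{Green-Tao-th}) yields a bound $C(1+\|f_1\|_L)/\log^A N$ uniform in $x$ and $z$, which is then fed into \eqref{bo4}. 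This also explains why the corollary is stated only for $\bmu$ and $\lamob$ rather than for all multiplicative functions satisfying the strong Daboussi--Delange condition: that condition controls correlations with characters only, whereas the nilsystem case requires quantitative orthogonality to all nilsequences, which Green--Tao supplies for $\bmu$ (and hence for $\lamob$) but which is not a consequence of the hypotheses of Theorem \ref{main-2}. To fix your write-up you would need to replace the appeal to Theorem \ref{main-2} by this argument, or at least by the statement of Proposition \ref{Green-Tao-th}.
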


%The strategy of the proof of our second main result can be described as follows. We first give the proof when at least one of the dynamical system has a discrete spectrum. Using Leibman's observation combined with Green-Tao's estimation, we extend our proof to the case when at least one of the dynamical system is a nilsystem. As a consequence, we get that Sarnak's conjecture holds for the class of nilsystems. Finally, we establish our second main result for the functions in the orthocomplement of $\mathcal{Z}_i$ factors, $i=2,3$.

The proof of Corollary \ref{main-C2} can be obtained independently and we shall give
such proof in section 4.

%This proof is much simpler and it provides the speed of convergence to $0$ of the cubic averages of order 2
%and 3 in the case of Liouville, M\"{o}bius  and Mangoldt functions.\\

%Precisely, the strategy of the proof is based on the proof of our second main result. Using Leibman's
%observation combined with Green-Tao's estimation, we extend our proof to the case when at least one of the
%dynamical system is a nilsystem. As a consequence, we get that Sarnak's conjecture holds for the class of nilsystems.
%Finally, we establish that the cubic  ergodic averages of order $2$ (resp. of order $3$) converge to zero
%if at least one of the functions is in the orthocomplement of $\mathcal{Z}_i$ factor, $i=2,3$.

%The strategy of this independent proof can be described as follows. We first give the proof when at least one of the dynamical system has a discrete spectrum. Using Leibman's observation combined with Green-Tao's estimation, we extend our proof to the case when at least one of the dynamical system is a nilsystem. As a consequence, we get that Sarnak's conjecture holds for the class of nilsystems. Finally, we establish our first main result for the functions in the orthocomplement of $\mathcal{Z}_2$ factor.\\

%\todoY{To avoid conflict, we better not use the notion of Host-Kra-Ziegler's factor of order 2}\todoH{I agree absolutely.}
%Let us point out that our proof provide the speed of convergence to 0 of the cubic average of order 2 and 3 in the case of Liouville function, M\"{o}bius function and Mangoldt function.

The proof of Theorem \ref{main-2} is divided into subsections in section \ref{sec-4}. In subsection \ref{sub4-1}, we prove that for any $N \geq 2$,
$$\frac1{N}\sum_{m=1}^{N}\Big|\frac1{N}\sum_{n=1}^{N} \bnu(n) \bnu(n+m)\Big| \leq \frac{C}{\log(N)^{\epsilon}},$$
and
$$\frac1{N^2}\sum_{n,p=1}^{N}\Big|\frac1{N}\sum_{m=1}^{N} \bnu(m) \bnu(n+m)\bnu(m+p)\bnu(n+m+p)\Big| \leq \frac{C}{\log(N)^{\varepsilon}},$$
where $C,\varepsilon$ are some positive constants and $\bnu$ is %the Mangoldt function or a
a bounded multiplicative function which satisfies a weak Daboussi-Delange condition, that is,
$$\Big|\frac1{N}\sum_{n=1}^{N}\bnu(n)e^{2i\pi n \alpha}\Big| \leq \frac{c}{\log(N)^{\epsilon}},~~~~\textrm{for~~all~~} \alpha,$$
with $c$ is a positive constant.

In subsection \ref{sub4.4}, we provide a simpler proof of Corollary \ref{main-C2} based on the proof of our second main result.

%Finally, in  section \ref{sec-5}, we give a simple proof of Corollaries which doesn't depend on our
%first main result. This later proof is much simpler and does not invoke a heavy material.\\

Notice that our result involves the self-correlation of $\bmu$. Furthermore, as far as we know the problem of
the self-correlation of $\bmu$ is still open. This problem is known as Elliott's conjecture and it can be stated as follows.
\begin{conj}[of Elliott]\cite{Elliott-C}
 \begin{eqnarray}\label{Elliot}
  \lim_{N \longrightarrow +\infty} c_{h,N}(\bmu)=\begin{cases}
0 &{\rm {~~if~~}} h \neq 0 \\
\displaystyle \frac{6}{\pi^ 2} &{\rm {~~if~not~}}.
\end{cases}
 \end{eqnarray}
\end{conj}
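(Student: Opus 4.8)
The plan is to treat the two cases $h=0$ and $h\neq 0$ separately, since they are of completely different depth: the diagonal case is an elementary and unconditional density computation, whereas the off-diagonal case is the genuine two-point Chowla--Elliott correlation and constitutes the real obstacle.

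\textbf{The diagonal case $h=0$.} Here $c_{0,N}(\bmu)=\frac1N\sum_{m=1}^{N}\bmu(m)^2$, and since $\bmu(m)^2$ is precisely the indicator of the square-free integers, this quantity is the counting density of square-free numbers up to $N$. I would evaluate it through the standard square-free sieve identity $\bmu(m)^2=\sum_{d^2\mid m}\bmu(d)$, which gives
$$\sum_{m=1}^{N}\bmu(m)^2=\sum_{d\leq\sqrt N}\bmu(d)\Big\lfloor\frac{N}{d^2}\Big\rfloor=N\sum_{d\leq\sqrt N}\frac{\bmu(d)}{d^2}+O(\sqrt N).$$
Dividing by $N$, letting $N\to+\infty$, and using the Dirichlet series evaluation $\sum_{d=1}^{\infty}\bmu(d)/d^2=1/\zeta(2)=6/\pi^2$ yields $c_{0,N}(\bmu)\tend{N}{+\infty}6/\pi^2$. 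This half of the statement is thus completely unconditional.

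\textbf{The off-diagonal case $h\neq 0$.} This is where the difficulty lies, and I do not expect a fully unconditional proof of Ces\`aro convergence for a fixed shift, since this is exactly the open content of the conjecture. My plan would be to first pass from the merely multiplicative $\bmu$ to the completely multiplicative $\lamob$ by expanding both square-free indicators, writing
$$\bmu(m)\bmu(m+h)=\lamob(m)\lamob(m+h)\Big(\sum_{a^2\mid m}\bmu(a)\Big)\Big(\sum_{b^2\mid (m+h)}\bmu(b)\Big),$$
truncating the two divisor sums at a slowly growing height and bounding the tails through the square-free density estimate of the diagonal case. This reduces matters to two-point Liouville correlations $\frac1N\sum_m\lamob(m)\lamob(m+h)$ along the fixed progressions cut out by $a^2$ and $b^2$. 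For these correlations I would attempt to import the short-interval multiplicative-function machinery of Matom\"aki--Radziwi{\l}{\l} and the entropy-decrement method underlying \cite{MRtao} and \cite{MR}.

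The main obstacle is precisely the passage from \emph{averaged} cancellation to cancellation at a single fixed $h$. All the available technology---including the $h$-averaged bound $\frac1X\sum_{h\leq X}\big|\sum_{n\leq X}\lamob(n)\lamob(n+h)\big|={\rm o}(X)$ of \cite{MRtao} recalled in the introduction, together with the logarithmically-weighted two-point estimates---produces cancellation only after averaging over the shift or after replacing the Ces\`aro mean by a logarithmic one. I know of no way to remove this averaging so as to obtain $c_{h,N}(\bmu)\to 0$ for each individual $h$ with the natural $1/N$ weighting, and it is exactly this step that keeps the off-diagonal half conjectural; the honest conclusion is that only the diagonal case admits a complete proof by the argument above.
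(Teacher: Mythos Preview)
The statement you are analyzing is labelled in the paper as a \emph{conjecture}, not a theorem, and the paper supplies no proof of it. Immediately after stating it, the authors quote Elliott's own remark that ``at the moment we are unable to prove it,'' and the surrounding discussion treats the off-diagonal correlations as an open problem motivating the paper's averaged results (Theorem~\ref{Main-Speed1} and Corollary~\ref{Main-P}). So there is no ``paper's own proof'' to compare against.

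Your assessment is in fact the correct one and matches the paper's stance: the diagonal case $h=0$ is the classical squarefree density computation you carried out, while the off-diagonal case $h\neq 0$ is the genuine Chowla--Elliott two-point correlation and remains open for fixed $h$ under Ces\`aro averaging. The paper's contribution is precisely to sidestep this obstruction by proving only the $h$-averaged estimate $\frac1N\sum_{m=1}^{N}\bigl|\frac1N\sum_{n=1}^{N}\bmu(n)\bmu(n+m)\bigr|\leq C/\log(N)^{\varepsilon}$, which, as you note, does not yield convergence for any individual shift. Your conclusion that ``only the diagonal case admits a complete proof'' is exactly right.
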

P.D.T.A. Elliott wrote in his 1994's AMS Memoirs that ``even the simple particular
cases of the correlation (when $h=1$ in \eqref{Elliot}) are not well understood.
Almost surely the M\"obius function satisfies \eqref{Elliot} in this case, but at
the moment  we are unable to prove it.''\\

Recently, el Abdalaoui and Disertori in \cite{al-Diser} established that the $L^1$-flatness of
trigonometric polynomials with M\"{o}bius coefficients implies Elliott's conjecture. The $L^1$-flatness
of trigonometric polynomials with coefficients in $\{0,1,-1\}$ is an open problem in harmonic analysis
and spectral theory of dynamical systems (see \cite{al-Nad} and the references therein).\\

Nevertheless, as a consequence of Theorem \ref{main}, we have the following result
\begin{Cor} Let $\bnu$ be the M\"{o}bius function or the Liouville function. Then
\begin{equation}\label{MLcubic}
\dfrac1{N^k}\sum_{{\boldsymbol{n}} \in [1,N]^k}\prod_{{\boldsymbol{e}}
\in C^*}\bnu({\boldsymbol{n}}.{\boldsymbol{e}})
\tend{N}{+\infty}0,
\end{equation}
where ${\boldsymbol{n}}=(n_1,\cdots,n_k)$, ${\boldsymbol{e}}
=(e_1,\cdots,e_k)$, $C^*=\{0,1\}^k\setminus\{(0,\cdots,0)\}$,
${\boldsymbol{n}}.{\boldsymbol{e}}$ is the usual inner product.
\end{Cor}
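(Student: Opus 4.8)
The plan is to read off \eqref{MLcubic} from Theorem \ref{main} by specializing the latter to the trivial dynamical system. The only hypothesis that needs checking is that the M\"{o}bius and Liouville functions satisfy the strong Daboussi-Delange condition, and this is immediate from Davenport's estimate: by Proposition \ref{PDAV1} (resp. Proposition \ref{PDAV2}), for any $\epsilon>0$ one has $\sum_{n=1}^{N}\bnu(n)e^{2\pi i n\alpha}=O(N/\log(N)^{\epsilon})$ uniformly in $\alpha$, hence $\sup_{\alpha\in\R}\big|\frac1N\sum_{n=1}^{N}\bnu(n)e^{2\pi i n\alpha}\big|\le C/\log(N)^{\epsilon}\to 0$; so in fact $\bnu$ satisfies the strong Daboussi-Delange condition with logarithmic speed, which is more than Theorem \ref{main} requires.

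Next I would apply Theorem \ref{main} to the one-point probability space $X=\{x_0\}$, taking $T_{{\boldsymbol{e}}}=\Id$ and $f_{{\boldsymbol{e}}}\equiv 1\in L^{\infty}(X)$ for every ${\boldsymbol{e}}\in C^*$. For this choice $f_{{\boldsymbol{e}}}\big(T_{{\boldsymbol{e}}}^{{\boldsymbol{n}}.{\boldsymbol{e}}}x_0\big)=1$ for every ${\boldsymbol{n}}$, so the weighted cubic average occurring in \eqref{con} is literally $\frac1{N^{k}}\sum_{{\boldsymbol{n}}\in[1,N]^{k}}\prod_{{\boldsymbol{e}}\in C^*}\bnu({\boldsymbol{n}}.{\boldsymbol{e}})$. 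Since ``almost all $x$'' here just means the single point $x_0$, which has full measure, the conclusion of Theorem \ref{main} for this system is precisely \eqref{MLcubic}.

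I expect no genuine obstacle: the corollary is a purely arithmetic statement with no surviving dynamics, and it is a transparent specialization of the main theorem once the Daboussi-Delange hypothesis has been verified via Davenport's bound. For completeness I would also point out that in the low-order cases one even obtains an explicit rate without invoking the full machinery behind Theorem \ref{main}: pulling one coordinate out of the product and estimating the inner sum by absolute values, the case $k=2$ of \eqref{MLcubic} is dominated by the pair correlation bound $\frac1N\sum_{m=1}^{N}\big|\frac1N\sum_{n=1}^{N}\bnu(n)\bnu(n+m)\big|\le C/\log(N)^{\epsilon}$, and the case $k=3$ by $\frac1{N^2}\sum_{n,p=1}^{N}\big|\frac1N\sum_{m=1}^{N}\bnu(m)\bnu(n+m)\bnu(m+p)\bnu(n+m+p)\big|\le C/\log(N)^{\varepsilon}$, both established in Subsection \ref{sub4-1}; for general $k$ one genuinely needs Theorem \ref{main} (or an inductive extension of those estimates).
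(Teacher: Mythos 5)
Your proposal is correct and coincides with the paper's own proof, which is literally ``take $f_{\boldsymbol{e}}=1$ in Theorem \ref{main}''; your verification that the M\"obius and Liouville functions satisfy the strong Daboussi--Delange condition via Propositions \ref{PDAV1} and \ref{PDAV2} is exactly the observation the paper makes when introducing that condition. The closing remarks about explicit rates in the cases $k=2,3$ are a pleasant addition but not a different route.
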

\begin{proof}Take $f_{\boldsymbol{e}}=1$ in Theorem \ref{main}.
\end{proof}

Our third main result can be stated as follows.
\begin{Th}\label{Main-Speed1}Let $\bnu$ be a multiplicative function satisfying the Daboussi-Delange condition with logarithmic speed.
Then, for any $N \geq 2$,
$$\frac1{N}\sum_{m=1}^{N}\Big|\frac1{N}\sum_{n=1}^{N} \bnu(n) \bnu(n+m)\Big| \leq \frac{C}{\log(N)^{\kappa}},$$
and
$$\frac1{N^2}\sum_{n,p=1}^{N}\Big|\frac1{N}\sum_{m=1}^{N} \bnu(m) \bnu(n+m)\bnu(m+p)\bnu(n+m+p)\Big| \leq \frac{C}{\log(N)^{\kappa}},$$
where $C$ is some positive constant.
\end{Th}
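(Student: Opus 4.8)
The plan is to bound each of the two averages by reducing the inner double (resp. quadruple) correlation to an $L^2$-average of exponential sums over the circle, to which the Daboussi--Delange hypothesis with logarithmic speed applies directly. For the first average, the key observation is a standard Fourier/van der Corput manipulation: writing $S_N(\alpha) = \frac1N\sum_{n=1}^N \bnu(n) e^{2\pi i n\alpha}$, one has by Parseval (after extending the range and absorbing boundary terms of size $O(1/N)$)
$$\frac1{N}\sum_{m=1}^{N}\Big|\frac1{N}\sum_{n=1}^{N}\bnu(n)\bnu(n+m)\Big|
\le \Big(\frac1{N}\sum_{m=1}^{N}\Big|\frac1{N}\sum_{n=1}^{N}\bnu(n)\bnu(n+m)\Big|^2\Big)^{1/2},$$
and the right-hand sum, by opening the square and interchanging sums, is controlled by $\int_0^1 |S_N(\alpha)|^2\, d\alpha$ up to constants and lower-order terms. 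Since $\sup_\alpha |S_N(\alpha)| \le c/\log(N)^\kappa$ by hypothesis, $\int_0^1|S_N(\alpha)|^2 d\alpha \le c^2/\log(N)^{2\kappa}$, so the square root gives the claimed $C/\log(N)^\kappa$. One has to be a little careful that the shifted argument $n+m$ ranges slightly outside $[1,N]$; this only introduces $O(1/N)$ errors, which are swallowed by the logarithmic bound for $N$ large, and handled by adjusting $C$ for small $N$.

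For the second (cubic, order-$3$) average the same philosophy applies but the correlation is of degree four and depends on two shift parameters $n,p$. Here I would first apply Cauchy--Schwarz in the outer $(n,p)$-average to pass to the square of the inner sum, then expand: the square of $\frac1N\sum_m \bnu(m)\bnu(n+m)\bnu(m+p)\bnu(n+m+p)$, summed and averaged over $n,p$, becomes an average over several shift variables of products of four copies of $\bnu$, which after a change of variables collapses to an expression bounded by a product (or convolution) of a bounded number of exponential-sum $L^2$-norms $\int |S_N(\alpha)|^2 d\alpha$; alternatively, one recognizes the quadruple product as $g(m)g(m+p)$ for $g(m)=\bnu(m)\bnu(n+m)$ and runs the order-$2$ argument twice, once in $p$ and once in $n$. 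Either route reduces everything to finitely many factors of the form $\sup_\alpha|S_N(\alpha)| \le c/\log(N)^\kappa$ (only one such factor is needed to produce the final bound, the rest being bounded trivially by $1$ since $\|\bnu\|_\infty \le 1$), yielding the stated estimate with a new constant $C = C(c,\kappa)$.

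The main obstacle I anticipate is the bookkeeping in the cubic case: correctly tracking the ranges of summation after the successive Cauchy--Schwarz and change-of-variables steps so that (i) every shifted index stays essentially in $[1,N]$ with only $O(1/N)$ slack, and (ii) one genuinely extracts \emph{at least one} clean factor $\int_0^1 |S_N(\alpha)|^2\,d\alpha$ rather than a product of partial sums that cannot be Fourier-diagonalized. The degree-two case is essentially immediate from Parseval; the degree-four case requires choosing the order of the Cauchy--Schwarz applications so that a pair of the four $\bnu$-factors becomes a pure additive character sum in one of the free variables. Once that is set up, the logarithmic-speed hypothesis does all the analytic work, and the completely-multiplicative structure of $\bnu$ is not even needed beyond $\|\bnu\|_\infty\le 1$.
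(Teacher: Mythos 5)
For the first estimate your overall route (Cauchy--Schwarz in $m$, a Fourier identification of the correlation, then Bessel--Parseval plus the sup-norm hypothesis) is the same as the paper's, but the pivotal inequality you state is wrong: the second moment $\frac1N\sum_{m\le N}\bigl|\frac1N\sum_{n\le N}\bnu(n)\bnu(n+m)\bigr|^2$ is \emph{not} controlled by $\int_0^1|S_N(\alpha)|^2\,d\alpha$. By orthogonality that integral equals $\frac1{N^2}\sum_{n\le N}|\bnu(n)|^2\le 1/N$, so your chain would prove an $O(N^{-1/2})$ bound for the averaged correlation (a power-saving averaged Chowla statement), and the inequality itself already fails for $\bnu\equiv 1$, where the left side is $\asymp 1$. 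What the Fourier step actually gives (Bourgain's observation, as used in the paper) is that $\frac1N\sum_{n}\bnu(n)\bnu(n+m)$ is the $m$-th Fourier coefficient of $\Phi_N(z)=\bigl(\frac1N\sum_{n\le N}\bnu(n)z^{-n}\bigr)\bigl(\sum_{p\le 2N}\bnu(p)z^{p}\bigr)$, whence Bessel yields $\sum_m|c_{m,N}|^2\le\int_{\T}|\Phi_N|^2\le\sup_\alpha|S_N(\alpha)|^2\cdot\sum_{p\le 2N}|\bnu(p)|^2$: one factor must be taken in sup norm and the other, \emph{unnormalized}, in $L^2$. Since you do invoke $\sup_\alpha|S_N(\alpha)|$ in the next sentence, this is a repairable misstatement rather than a wrong idea, and after the repair your first estimate coincides with the paper's proof.

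The second estimate is where the genuine gap lies, and you have located the obstacle without closing it. Writing the quadruple correlation as $\frac1N\sum_m g_p(m)g_p(m+n)$ with $g_p(m)=\bnu(m)\bnu(m+p)$ and ``running the order-$2$ argument again'' requires, at the Bessel step, the bound $\sup_\alpha\bigl|\frac1N\sum_{m\le N}\bnu(m)\bnu(m+p)e^{2\pi i m\alpha}\bigr|\le C/\log(N)^{\kappa}$ (at least on average over $p$); this is a uniform exponential-sum estimate for the \emph{binary correlation} of $\bnu$, which the Daboussi--Delange hypothesis does not supply --- it only controls the linear sum. Your other route fares no better: squaring out over $(n,p)$ produces, up to normalization, the Gowers $U^3$ count of $\bnu$, which is not controlled by linear exponential sums (that is the content of the inverse theorem). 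So neither sketch yields the second inequality. You should also be aware that the paper's own treatment has the same defect: its kernel $\Psi_{N,p}(z)=\bigl(\frac1N\sum_m\bnu(m)z^{-m}\bigr)\bigl(\sum_{m'}\bnu(m')\bnu(m'+p)z^{m'}\bigr)$ has $n$-th Fourier coefficient equal to the \emph{triple} correlation $\frac1N\sum_m\bnu(m)\bnu(m+n)\bnu(m+n+p)$, not the quadruple one in the statement; making the Fourier identification match forces the first factor to be $\frac1N\sum_m\bnu(m)\bnu(m+p)z^{-m}$, which is precisely the twisted binary sum the hypothesis does not bound. The obstacle you flagged is therefore real and is not resolved in the source either.
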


%The proof of Theorem \ref{Main-Speed1} combined with Davenport estimation \eqref{Man} allows us to deduce the following.
It is easy to see that from Davenport estimation \eqref{Man} that we have the following
\begin{Cor}\label{Main-Speed2} Let $\Lambda$ be the von Mangoldt function. Then, for any $N \geq 2$, for almost all $t$,
$$\Big|\frac1{N}\sum_{n=1}^{N} \Lambda(n) e^{2 i\pi n  t }\Big| \tend{n}{+\infty}0.$$
%and
%$$\frac1{N^2}\sum_{n,p=1}^{N}\Big|\frac1{N}\sum_{m=1}^{N} \Lambda(m) \Lambda(n+m)\Lambda(m+p)\Lambda(n+m+p)\Big| \leq \frac{C.\big(\log(N)\big)^7}{N^{\varepsilon}},$$
%where $C,\varepsilon$ are some positive constants, and $\varepsilon<1$.
\end{Cor}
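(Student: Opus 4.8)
The plan is to deduce Corollary \ref{Main-Speed2} directly from the Davenport-type bound \eqref{Man}, combined with the classical fact from the metric theory of Diophantine approximation that Lebesgue-almost every real number is of type $1$. No new ideas beyond these two inputs are needed, so the argument is genuinely short — which is exactly why the preceding text says it is ``easy to see''.

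First I would record the metric input precisely. Every irrational $t$ is of type at least $1$: by Dirichlet's theorem $\|qt\|_{\Z}<1/q$ for infinitely many $q$, hence $q^{\gamma}\|qt\|_{\Z}<q^{\gamma-1}\to 0$ along that subsequence for every $\gamma<1$. For the opposite inequality, fix $\gamma>1$; for each $q\ge 1$ the set $\{t\in[0,1):\|qt\|_{\Z}<q^{-\gamma}\}$ has Lebesgue measure $2q^{-\gamma}$, and $\sum_{q\ge 1}q^{-\gamma}<\infty$, so by the Borel--Cantelli lemma, for almost every $t$ the inequality $\|qt\|_{\Z}<q^{-\gamma}$ holds for only finitely many $q$; thus $\liminf_{q\to\infty}q^{\gamma}\|qt\|_{\Z}\ge 1>0$. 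Intersecting these full-measure sets over a sequence $\gamma_k\downarrow 1$ shows that, for almost every $t$, $\liminf_{q\to\infty}q^{\gamma}\|qt\|_{\Z}>0$ for all $\gamma>1$; that is, almost every $t$ is of type exactly $1$. (Equivalently, one may simply cite that almost every real has irrationality measure $2$.)

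Next I would fix such a $t$ (of type $1$) and choose $\epsilon\in(0,\tfrac15)$. The estimate \eqref{Man}, valid for every $t$ of type $1$ with an implied constant allowed to depend on $t$ and $\epsilon$, gives $\big|\sum_{n=1}^{N}\Lambda(n)e^{2i\pi n t}\big|\ll_{t,\epsilon} N^{4/5+\epsilon}$ for $N\ge 2$; dividing by $N$ yields $\big|\tfrac1N\sum_{n=1}^{N}\Lambda(n)e^{2i\pi n t}\big|\ll_{t,\epsilon} N^{\epsilon-1/5}$, which tends to $0$ because $\epsilon-\tfrac15<0$. Since the set of such $t$ has full Lebesgue measure, this is precisely the asserted almost-everywhere convergence.

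I do not expect any real obstacle here; the only point worth flagging is the contrast with the M\"obius and Liouville cases (Propositions \ref{PDAV1} and \ref{PDAV2}), where the corresponding exponential-sum bounds are uniform in $t$. The bound \eqref{Man} is neither uniform nor valid at rational points — indeed $\tfrac1N\sum_{n\le N}\Lambda(n)\to 1$ by the prime number theorem, and more generally the average tends to a nonzero constant at $t=a/b$ by the prime number theorem in arithmetic progressions — so one cannot hope for more than almost-everywhere convergence, the measure-zero exceptional set being the rationals together with the numbers of type $>1$.
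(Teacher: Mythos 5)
Your proof is correct and follows exactly the route the paper intends: the paper simply asserts the corollary is "easy to see" from the estimate \eqref{Man}, and your argument supplies the two details left implicit, namely that Lebesgue-almost every $t$ is of type $1$ (via Dirichlet plus Borel--Cantelli) and that dividing the bound $O_{t,\epsilon}(N^{4/5+\epsilon})$ by $N$ gives convergence to $0$ for $\epsilon<1/5$. Your closing remark correctly identifies why the conclusion cannot be uniform in $t$ (the average tends to a nonzero constant at rationals), which is consistent with the paper's discussion following Proposition \ref{MSanka}.
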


We further have, by the proof of Theorem \ref{Main-Speed1}, the following corollary,

\begin{Cor}\label{Main-P} For any $\rho > 1$ there exist two sequences of integer $m_l,n_l$ such that
$$\frac{1}{[\rho^{m_l}]}\sum_{k=1}^{[\rho^{m_l}]}\bmu(k)\bmu(k+n_l)\tend{l}{+\infty} 0,$$
and
$$\frac{1}{[\rho^{m_l}]}\sum_{k=1}^{[\rho^{m_l}]}\lamob(k)\lamob(k+n_l)\tend{l}{+\infty} 0.$$ We further have, for any integer
$N \geq 2$, for any $\epsilon >0$,
$$ \frac1{N}\sum_{n=1}^{N}\Big|\frac1{N}\sum_{m=1}^{N}\bmu(m)\bmu(n+m)\Big|\leq\frac{C}{\log(N)^{\epsilon}},$$
and
$$\frac1{N}\sum_{n=1}^{N}\Big|\frac1{N}\sum_{m=1}^{N}\lamob(m)\lamob(n+m)\Big|\leq \frac{C}{\log(N)^{\epsilon}},$$
where $C$ is a constant which depends only on $\varepsilon<1.$
\end{Cor}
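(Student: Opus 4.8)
The plan is to prove Corollary \ref{Main-P} as a direct specialization of the proof of Theorem \ref{Main-Speed1}, using the fact that by Propositions \ref{PDAV1} and \ref{PDAV2} both $\bmu$ and $\lamob$ satisfy the Daboussi-Delange condition with logarithmic speed. Concretely, the two inequalities at the end of the statement are exactly the two displayed bounds of Theorem \ref{Main-Speed1} applied with $\bnu=\bmu$ and with $\bnu=\lamob$, once we observe that Davenport's estimate gives, for each fixed $\epsilon<1$, a constant $\kappa=\epsilon$ (and a corresponding $C=C(\epsilon)$) valid for the logarithmic-speed hypothesis. So the first step is simply to invoke Theorem \ref{Main-Speed1} twice; the only thing worth a sentence is matching the exponent notation ($\kappa$ in the theorem, $\epsilon$ here).

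For the first two displays — summability along a divergent geometric sequence — the idea is to exploit the quantitative decay $C/\log(N)^{\kappa}$. Fix $\rho>1$ and set $N_l=[\rho^{l}]$. Then $\log(N_l)^{\kappa}\asymp (l\log\rho)^{\kappa}$, and since $\sum_l (l\log\rho)^{-\kappa}$ would only converge for $\kappa>1$ (which we do not have), I instead use that the Cesàro-type average over $m$ of $|\frac1{N_l}\sum_n \bnu(n)\bnu(n+m)|$ tends to $0$ along $N_l$; hence for each $l$ there is at least one index $m=n_l\in[1,N_l]$ with $|\frac1{N_l}\sum_{k=1}^{N_l}\bnu(k)\bnu(k+n_l)|\le C/\log(N_l)^{\kappa}\to 0$. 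Taking $m_l=l$ so that $[\rho^{m_l}]=N_l$ gives exactly the two convergence-to-zero statements in the corollary. (If one instead wants genuine summability of a moving average one averages over a geometrically lacunary block and sums the resulting bounded geometric series in $\rho^{-\kappa j}$; but for the stated corollary only the existence of the sequence $n_l$ and the decay of a single term is needed.)

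The main obstacle — really the only non-bookkeeping point — is making sure the constant $C$ in Theorem \ref{Main-Speed1} can be taken to depend only on $\epsilon<1$ when $\bnu\in\{\bmu,\lamob\}$; this is where Davenport's Proposition \ref{PDAV1} (resp. the Bateman–Chowla form, Proposition \ref{PDAV2}) enters, since it supplies, for every $\epsilon>0$ and every $N\ge 2$, the uniform-in-$t$ bound $\frac1N\sum_{n\le N}\bnu(n)e^{2\pi i n t}=O(\log(N)^{-\epsilon})$ with an implied constant depending only on $\epsilon$. Feeding this into the proof of Theorem \ref{Main-Speed1} in place of the abstract logarithmic-speed hypothesis yields the claimed $C=C(\epsilon)$. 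Everything else is substitution.
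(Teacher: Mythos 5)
Your proposal is correct. For the two Ces\`aro-type inequalities your argument is the same as the paper's: the paper derives them (the ``Factor'' in Subsection \ref{sub4.2}) by running the chain \eqref{bo1}--\eqref{bo4} with $\lambda=1$ and $f_3=1$, which is exactly the specialization of the proof of Theorem \ref{Main-Speed1} to $\bnu\in\{\bmu,\lamob\}$ via Davenport's uniform estimate (Propositions \ref{PDAV1}, \ref{PDAV2}); your remark that $C$ depends only on $\epsilon$ through the implied constant in Davenport's bound is the right bookkeeping. For the geometric-sequence part your route differs from the paper's in one step, and in a way that is to your credit. The paper first claims that for $N=[\rho^m]$ the averaged correlation is bounded by $C/\rho^{m\varepsilon}$, deduces summability of $\sum_m \frac{1}{[\rho^m]}\sum_{n\le[\rho^m]}|c_{n,[\rho^m]}|$, and then extracts $m_l$ from small tails and $n_l$ by pigeonhole. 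But $\log([\rho^m])^{\varepsilon}\asymp (m\log\rho)^{\varepsilon}$, not $\rho^{m\varepsilon}$, so with the logarithmic-speed bound (which is all that the uniform Davenport estimate provides --- Proposition \ref{MSanka} is not uniform in $t$ and cannot be fed into the $\sup_z$ step of \eqref{bo3}) the series $\sum_m (m\log\rho)^{-\varepsilon}$ diverges for $\varepsilon<1$, and the summability step is not justified as written. Your argument sidesteps this entirely: since the average over $n\le N_l$ of $|c_{n,N_l}|$ is $O(\log(N_l)^{-\kappa})\to 0$, the minimum is at most the average, which already produces $n_l$ with $|c_{n_l,[\rho^{m_l}]}|\to 0$; no summability is needed for the statement of Corollary \ref{Main-P}. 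The only thing your route does not recover is the paper's stronger follow-up corollary asserting convergence of $\sum_{m\ge1}\frac{1}{[\rho^m]}\sum_{n\le[\rho^m]}|c_{n,[\rho^m]}|$, but that assertion is not part of the statement you were asked to prove.
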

This gives
$$\frac1{N}\sum_{n=1}^{N}\Big|\frac1{N}\sum_{m=1}^{N}\bmu(m)\bmu(n+m)\Big|\tend{N}{+\infty}0,$$
and
$$\frac1{N}\sum_{n=1}^{N}\Big|\frac1{N}\sum_{m=1}^{N}\lamob(m)\lamob(n+m)\Big|\tend{N}{+\infty}0.$$
Before starting to prove our main results, let us point out that it suffices to establish our results for
a dense set of functions. Indeed, assume that the convergence holds for some $f_{\overline{1}}$,
$\overline{1}=(1,0,\cdots,0)$, and let $g$ be such that $||f_{\overline{1}}-g||_1<\epsilon$, for a given $\epsilon>0.$ Put
$$\psi_N(f_{\overline{1}})=\frac{1}{N}
\sum_{n_1=1}^{N}
\bnu(n_1)f_{\overline{1}}(T_{\overline{1}}^{n_1}(x))\frac{1}{N^{k-1}}\sum_{\overset{\boldsymbol{m}\in [1,N]^{k-1}}{\boldsymbol{n}=(n_1,\boldsymbol{m})}}\prod_{e \in C^*\setminus\{\overline{1}\}}
\bnu(\boldsymbol{n}.e)f_e(T_e^{\boldsymbol{n}.e}(x)).
$$
Then,
\begin{eqnarray*}
&&|\psi_N(f_{\overline{1}})-\psi_N(g)|\\
 &\leq& \frac1{N}\sum_{n_1=1}^{N}|f_{\overline{1}}(T_{\overline{1}}^{n_1}x)-g(T_{\overline{1}}^{n_1}x)|
\Big|\frac{1}{N^{k-1}}\sum_{\overset{\boldsymbol{m}\in [1,N]^{k-1}}{\boldsymbol{n}=(n_1,\boldsymbol{m})}}\prod_{e \in C^*\setminus\{\overline{1}\}}
\bnu(\boldsymbol{n}.e)f_e(T_e^{\boldsymbol{n}.e}(x))\Big|\\
&\leq& c \Big(\prod_{e \in C^*\setminus\{\overline{1}\}}||f_e||_{\infty}\Big)\frac1{N}\sum_{n=1}^{N}|f_1(T_1^nx)-g(T_1^nx)|.
\end{eqnarray*}
Letting $N \longrightarrow \infty$, it follows
$$\limsup|\psi_N(f_{\overline{1}})-\psi_N(g)| \leq \|f_{\overline{1}}-g\|_1 \leq \epsilon,$$
by Birkhoff ergodic theorem combined with our assumption. Notice that the maps $T_e$ are
supposed to be ergodic. From now on, without loss of generality, we will assume that $T_e$ are ergodic.

\section{The proof of Theorems \ref{main-2} and \ref{Main-Speed1}}\label{sec-4}
In this subsection we will show Theorems \ref{main-2} and \ref{Main-Speed1}. In the process to do
we also provide the proof of Corollary \ref{Main-P}.
%\subsection{On the discrete spectrum case.}\label{sec:ds}

\subsection{The case $k=2$.}\label{sub4-1}  %\label{sec:ds}
In this subsection, we focus our study on the case $k=2$ and at least one of the maps $T_i, i=1,2,3$ has a
discrete spectrum. %In the section 6 we shall study the case $k=3$.

%\todoH{Please check I rewrite carefully the proof.}
\begin{proof}[{\textbf {Proof of Theorem \ref{main-2}}}]Let assume that $T_1$ has a discrete
spectrum and let $f_1$ be an eigenfunction with eigenvalue $\lambda$. Then, for almost all  $x \in X$, we can write
\begin{eqnarray}\label{bo1}
&&\Big|\frac1{N^2}\sum_{n,m=1}^{N}\bnu(n)\bnu(m)\bnu(n+m)
f_1(T_1^nx) f_2(T_2^mx)f_3(T_3^{n+m}x)\Big|\nonumber \\
&=& \Big|\frac1{N}\sum_{m=1}^{N}\bnu(m)f_2(T_2^mx)
\frac1{N} \sum_{n=1}^{N}\bnu(n) \bnu(n+m)\lambda^n
 f_3(T_3^{n+m}x)\Big| \nonumber\\
&=& \Big|\frac1{N}\sum_{m=1}^{N}\bnu(m)f_2(T_2^mx)
\frac1{N} \sum_{n=1}^{N}\bnu(n) \bnu(n+m)\lambda^{n+m}
 f_3(T_3^{n+m}x)\Big| \nonumber\\
 & \leq &  \frac1{N} \sum_{m=1}^{N} \big|f_2(T_2^mx)\big|\Big|
 \frac1{N} \sum_{n=1}^{N}\bnu(n)\bnu(n+m) \lambda^{n+m}
f_3(T_3^{n+m}x)\Big|\nonumber\\
 &\leq& \|f_2\|_{\infty} \frac1{N} \sum_{m=1}^{N} \Big|\frac1{N}\sum_{n=1}^{N} \bnu(n)\bnu(n+m) \lambda^{n+m}
f_3(T_3^{n+m}x)\Big|
\end{eqnarray}

Applying Cauchy-Schwarz inequality we can rewrite \eqref{bo1} as
\begin{eqnarray}\label{boC1}
&&\Big|\frac1{N^2}\sum_{n,m=1}^{N}\bnu(n)\bnu(m)\bnu(n+m)
f_1(T_1^nx) f_2(T_2^mx)f_3(T_3^{n+m}x)\Big|\nonumber \\
&\leq &\|f_2\|_{\infty} \Big(\frac1{N} \sum_{m=1}^{N} \Big|\frac1{N}\sum_{n=1}^{N} \bnu(n)\bnu(n+m) \lambda^{n+m}
f_3(T_3^{n+m}x)\Big|^2\Big)^{\frac1{2}}
\end{eqnarray}

Furthermore, by Bourgain's observation \cite[equations (2.5) and (2.7)]{BourgainD}, we have
\begin{eqnarray}\label{bo2}
&&\sum_{m=1}^{N}\Big|\frac1{N}\sum_{n=1}^{N} \bnu(n)\bnu(n+m) \lambda^{n+m}f_3(T_1^{n+m}x)\Big|^2 \nonumber\\
 &=& \sum_{m=1}^{N}\Big|\int_{\T}\Big(\frac1{N}\sum_{n=1}^{N}
 \bnu(n)z^{-n}\Big)\Big(\sum_{p=1}^{2N} \bnu(p)
 f_3(T_3^px) {\big(\lambda.z\big)}^p\Big) z^{-m} dz\Big|^2 \nonumber \\
 &\leq&  \int_{\T}\Big|\frac1{N}\sum_{n=1}^{N} \bnu(n)
 z^{-n}\Big|^2\Big|\sum_{p=1}^{2N} \bnu(p)
 f_3(T_3^px) {\big(\lambda.z\big)}^p\Big|^2 dz.
\end{eqnarray}
The last inequality is due to Parseval-Bessel inequality. Indeed, put
$$\Phi_N(z)=\Big(\frac1{N}\sum_{n=1}^{N}
 \bnu(n) z^{-n}\Big)\Big(\sum_{p=1}^{2N} \bnu(p)
 f_3(T_3^px) {\big(\lambda.z\big)}^p \Big).$$
Then, for any $m \in \Z$,
 $$ \widehat{\Phi_N}(m)=\int_{\T}\Big(\frac1{N}\sum_{n=1}^{N}
 \bnu(n)z^{-n}\Big)\Big(\sum_{p=1}^{2N} \bnu(p)
 f_3(T_3^px) {\big(\lambda.z\big)}^p\Big) z^{-m} dz.$$
Whence
\begin{eqnarray*}
 &&\sum_{m=1}^{N}\Big|\int_{\T}\Big(\frac1{N}\sum_{n=1}^{N}
 \bnu(n) z^{-n}\Big)\Big(\sum_{p=1}^{2N} \bnu(p)
 f_3(T_3^px) {\big(\lambda.z\big)}^p\Big) z^{-m} dz\Big|^2\\
  &=&\sum_{m=1}^{N}\Big|\widehat{\Phi_N}(m)\Big|^2\\
 &\leq& \int_{\T} |\Phi_N(z)|^2 dz.
 \end{eqnarray*}
Now, combining \eqref{boC1} with \eqref{bo2} we can assert that
\begin{eqnarray}\label{bo3}
&&\Big|\frac1{N^2}\sum_{n,m=1}^{N}\bnu(n)\bnu(m)\bnu(n+m)
f_1(T_1^nx) f_2(T_2^mx)f_3(T_3^{n+m}x)\Big|\nonumber\\&=& \Big|\frac1{N}\sum_{m=1}^{N}\bnu(m)f_2(T_2^mx)
\frac1{N} \sum_{n=1}^{N}\bnu(n) \lambda^n\bnu(n+m)
 f_3(T_3^{n+m}x)\Big|\nonumber\\
 &\leq&\|f_2\|_{\infty} \Big(\frac1{N} \sup_{z \in \T} \Big|\frac1{N}\sum_{n=1}^{N} \bnu(n)
 z^{-n}\Big|^2 \int_{\T}\Big|\sum_{p=1}^{2N} \bnu(p)
 f_3(T_3^px) {(\lambda.z)}^p\Big|^2 dz\Big)^{\frac12},
\end{eqnarray}
%where $\mathcal{T}$ is the subset of all irrational numbers of type $1$.
We thus get
\begin{eqnarray}\label{bo4}
&&\Big|\frac1{N^2}\sum_{n,m=1}^{N}\bnu(n)\bnu(m)\bnu(n+m)
f_1(T_1^nx) f_2(T_2^mx)f_3(T_3^{n+m}x)\Big|\nonumber\\
 &\leq&\|f_2\|_{\infty}  \sup_{z \in \T} \Big|\frac1{N}\sum_{n=1}^{N}\bnu(n)
z^{-n}\Big| . \Big(\frac1{N} \sum_{p=1}^{2N}|\bnu(p)|^2\Big)^{\frac12} \|f_3\|_{\infty},
\end{eqnarray}
since the map $z \mapsto \lambda z$ is a measure-preserving transformation, and we have
$$\ds \int_{\T}\Big|\sum_{p=1}^{2N} \bnu(p) f_3(T_3^px) z^p\Big|^2 dz=\sum_{p=1}^{2N}|\bnu(p)|^2|f_3(T_3^px)|^2
\leq \sum_{p=1}^{2N}|\bnu(p)|^2  \|f_3\|_{\infty}^2. $$

%It follows from Davenport's estimation~\eqref{Dav1} combined with \eqref{bo4} that for each $\vep>0$, we have
It follows from our assumption on the multiplicative function $\bnu$ that we have
\begin{eqnarray}\label{eq:Dav3}
&&\Big|\frac1{N^2}\sum_{n,m=1}^{N}\bnu(n)\bnu(m)\bnu(n+m)
f_1(T_1^nx) f_2(T_2^mx)f_3(T_3^{n+m}x)\Big|\nonumber\\
&\leq& \sqrt{2}\|f_2\|_{\infty}\|f_3\|_{\infty}\frac{C}{\log(N)^{\varepsilon}}.
\end{eqnarray}
where $C$ and $\varepsilon$ are positive constants, and $\varepsilon<1$. Letting $N \longrightarrow \infty$, we conclude that the
almost sure convergence holds.
%Moreover, by  the same argument, the almost sure convergence to $0$ holds with the
%Liouville weight, and
The proof of the theorem for the case $k=2$ is complete.
\end{proof}

\subsection{On the self-correlation of M\"{o}bius and Louiville functions.}\label{sub4.2}    %\label{sec:scml}
Before the proof of Theorem \ref{main-2} for $k=3$,  we discuss the self-correlation of M\"{o}bius and Louiville functions first.
Notice that we have actually proved
\begin{factor} Let $\bnu$ be the M\"{o}bius or the Liouville function. Then, for any $N \geq 2$,
\begin{eqnarray}\label{Dav3}
\frac1{N}\sum_{n=1}^{N}\Big|\frac1{N}\sum_{m=1}^{N} \bnu(n)\bnu(n+m)\Big| \leq \frac{C}{\log(N)^{\varepsilon}},
\end{eqnarray}
where $C$ and $\varepsilon$ are  positive constants, $\varepsilon<1$.
\end{factor}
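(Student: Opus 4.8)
The plan is to extract the estimate directly from the computation carried out in the proof of Theorem~\ref{main-2} for $k=2$. Fix $N \ge 2$ and let $\bnu$ be the M\"obius or Liouville function. Specialize the chain of inequalities \eqref{bo1}--\eqref{bo4} to the trivial choices $\lambda = 1$, $f_2 \equiv 1$, $f_3 \equiv 1$, and $T_2 = T_3 = \Id$ on the one-point system; equivalently, repeat the argument without any dynamical factors at all. With these choices the left-hand side of \eqref{bo1} becomes $\frac1{N}\sum_{m=1}^{N}\big|\frac1{N}\sum_{n=1}^{N}\bnu(n)\bnu(n+m)\big|$, while the Cauchy--Schwarz step \eqref{boC1} followed by Bourgain's Fourier identity \eqref{bo2} and Parseval--Bessel gives
$$\frac1{N}\sum_{m=1}^{N}\Big|\frac1{N}\sum_{n=1}^{N}\bnu(n)\bnu(n+m)\Big|
\le \Big(\frac1{N}\,\sup_{z\in\T}\Big|\frac1{N}\sum_{n=1}^{N}\bnu(n)z^{-n}\Big|^2\int_{\T}\Big|\sum_{p=1}^{2N}\bnu(p)z^p\Big|^2\,dz\Big)^{1/2}.$$
Evaluating the integral by orthogonality of the characters $z \mapsto z^p$ yields $\int_{\T}\big|\sum_{p=1}^{2N}\bnu(p)z^p\big|^2\,dz = \sum_{p=1}^{2N}|\bnu(p)|^2 \le 2N$, exactly as in the display after \eqref{bo4}.

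The second ingredient is Davenport's estimate. By Proposition~\ref{PDAV1} (for $\bmu$) and Proposition~\ref{PDAV2} (for $\lamob$), for any $\epsilon>0$ there is a constant $C_\epsilon$ with
$$\sup_{z\in\T}\Big|\frac1{N}\sum_{n=1}^{N}\bnu(n)z^{-n}\Big| = \sup_{t}\Big|\frac1{N}\sum_{n=1}^{N}\bnu(n)e^{2\pi i n t}\Big| \le \frac{C_\epsilon}{\log(N)^{\epsilon}},$$
uniformly in the variable. Substituting this bound and $\sum_{p=1}^{2N}|\bnu(p)|^2 \le 2N$ into the displayed inequality gives
$$\frac1{N}\sum_{m=1}^{N}\Big|\frac1{N}\sum_{n=1}^{N}\bnu(n)\bnu(n+m)\Big| \le \Big(\frac{C_\epsilon^2}{\log(N)^{2\epsilon}}\cdot\frac{2N}{N}\Big)^{1/2} = \frac{\sqrt{2}\,C_\epsilon}{\log(N)^{\epsilon}},$$
which is the claimed bound \eqref{Dav3} with $C = \sqrt{2}\,C_\epsilon$ and $\varepsilon = \epsilon$ (any $\varepsilon < 1$ is admissible, with $C$ depending on $\varepsilon$).

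There is essentially no obstacle here beyond bookkeeping: the Factor is a by-product of the Theorem~\ref{main-2} argument, so the only thing to check carefully is that the dynamical factors $f_2, f_3$ and the eigenvalue $\lambda$ played no essential role in the estimate $\int_{\T}\big|\sum_{p=1}^{2N}\bnu(p)f_3(T_3^p x)(\lambda z)^p\big|^2\,dz \le \sum_{p=1}^{2N}|\bnu(p)|^2\|f_3\|_\infty^2$ — they only contributed a harmless factor $\|f_2\|_\infty\|f_3\|_\infty$ which equals $1$ in our specialization. The only place where arithmetic (as opposed to soft Fourier analysis) enters is the uniform Davenport bound on the exponential sum, and that is imported wholesale from Propositions~\ref{PDAV1} and~\ref{PDAV2}. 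Hence the statement follows, as the phrase ``we have actually proved'' in the text indicates.
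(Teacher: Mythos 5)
Your proof is correct and follows essentially the same route as the paper: specialize the chain of inequalities \eqref{bo1}--\eqref{bo4} to $\lambda=1$, $f_2=f_3=1$, apply Cauchy--Schwarz, Bourgain's Fourier identity and Parseval--Bessel, and finish with the uniform exponential-sum bound. If anything, your invocation of Propositions \ref{PDAV1} and \ref{PDAV2} is more accurate than the paper's own one-line proof, which cites the non-uniform type-$1$ estimates \eqref{MS} and \eqref{Man} where the uniform Davenport and Bateman--Chowla bounds are what is actually needed.
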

\begin{proof}
  This follows by combining \eqref{bo1}, \eqref{bo2}, \eqref{bo3}, \eqref{MS}
with \eqref{Man} and by taking $\lambda = 1$ and $f_3 = 1$.
\end{proof}
At this point, the proof of the second part of Corollary \ref{Main-P} follows.\\

Let $\rho>1$, then for $N=[\rho^m]$ with some $m\ge1$, \eqref{Dav3} takes the form

\[
 \frac1{[\rho^m]}\sum_{n=1}^{[\rho^m]}|c_{n,N}|\\ \leq \frac{C}{{\rho}^{m\varepsilon}}\text{,~~~~~for some positive number~~ }\varepsilon.
\]
This gives
\[
\sum_{ m \geq 1}\frac1{[\rho^m]}\sum_{n\leq [\rho^m]}|c_{n,[\rho^m]}| <+\infty.
\]
Let $(\delta_l)$ be a sequence of positive numbers such that $\delta_l \tend{l}{+\infty}0.$ Then, for any $l \geq 1$
there exists a positive integer $m_l$ such that
\[
\sum_{ m \geq m_l}\frac1{[\rho^m]}\sum_{n\leq [\rho^m]}|c_{n,[\rho^m]}| <\delta_l.
\]
This gives, for any $m \geq m_l$,
\[
\frac1{[\rho^m]}\sum_{n\leq [\rho^m]}|c_{n,[\rho^m]}| <\delta_l.
\]
Hence, there exists $n_l \leq [\rho^m] $ such that
\[
|c_{n_l,[\rho^{m_l}]}| <\delta_l.
\]
By letting $l$ go to $\infty$, we get $c_{n_l,[\rho^{m_l}]} \tend{l}{+\infty} 0$. This prove the first part of Corollary~\ref{Main-P}.\QEDA\\

Note that we have proved more, namely,
\begin{Cor}For any $\rho>1$,
$$\sum_{ m \geq 1}\frac1{[\rho^m]}\sum_{n=1}^{[\rho^m]}\Big|\frac1{[\rho^m]}
\sum_{k=1}^{[\rho^{m}]}\bmu(k)\bmu(k+n)\Big| <+\infty. $$
\end{Cor}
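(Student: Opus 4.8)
The corollary is the quantitative form of the preceding Factor obtained by running the argument of Subsection~\ref{sub4-1} along the lacunary scale $N=[\rho^m]$ and then summing over $m$. The one point to watch is elementary bookkeeping: with $N=[\rho^m]$ the quantity $\log(N)$ is comparable to $m\log\rho$, so $\log(N)^{-\varepsilon}$ is comparable to $m^{-\varepsilon}$, and the series over $m$ converges only if the estimate is carried out with an exponent $\varepsilon>1$. This is legitimate, because in the chain \eqref{boC1}--\eqref{bo2} specialized to $\bnu=\bmu$, $\lambda=1$ and $f_3\equiv 1$ the Parseval--Bessel step gives
$$\sum_{n=1}^{N}\Big|\frac1N\sum_{k=1}^{N}\bmu(k)\bmu(k+n)\Big|^{2}\;\leq\;\sup_{z\in\T}\Big|\frac1N\sum_{k=1}^{N}\bmu(k)z^{-k}\Big|^{2}\sum_{p=1}^{2N}|\bmu(p)|^{2}\;\leq\;2N\sup_{t\in\T}\Big|\frac1N\sum_{k=1}^{N}\bmu(k)e^{2\pi ikt}\Big|^{2},$$
and the only arithmetic input thereafter is Davenport's estimate, which by Proposition~\ref{PDAV1} holds with \emph{every} positive exponent. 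Dividing by $N$, applying the Cauchy--Schwarz inequality, and then Proposition~\ref{PDAV1} with exponent $A$, one gets for every $A>0$ and every $N\geq 2$
$$\frac1N\sum_{n=1}^{N}\Big|\frac1N\sum_{k=1}^{N}\bmu(k)\bmu(k+n)\Big|\;\leq\;\sqrt{2}\,\sup_{t\in\T}\Big|\frac1N\sum_{k=1}^{N}\bmu(k)e^{2\pi ikt}\Big|\;\leq\;\frac{C_{A}}{\log(N)^{A}}.$$

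Next I would fix $A=2$, put $N_m:=[\rho^m]$, and choose $m_0=m_0(\rho)$ so large that $N_m\geq 2$ and $m\log\rho\geq 2\log 2$ for all $m\geq m_0$. For such $m$ we have $\rho^m\geq N_m\geq 2$, hence $N_m\geq\rho^m-1\geq\tfrac12\rho^m$ and $\log(N_m)\geq m\log\rho-\log 2\geq\tfrac12 m\log\rho$, so substituting $N=N_m$ in the last display,
$$\frac1{N_m}\sum_{n=1}^{N_m}\Big|\frac1{N_m}\sum_{k=1}^{N_m}\bmu(k)\bmu(k+n)\Big|\;\leq\;\frac{C_{2}}{\log(N_m)^{2}}\;\leq\;\frac{4C_{2}}{(\log\rho)^{2}}\cdot\frac1{m^{2}}.$$

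To finish I would split the series in the corollary as $\sum_{1\leq m<m_0}+\sum_{m\geq m_0}$. Each of the finitely many terms with $m<m_0$ is at most $\frac1{N_m}\sum_{n=1}^{N_m}\frac1{N_m}\sum_{k=1}^{N_m}1=1$ since $|\bmu|\leq 1$, so the first part is finite; the tail is bounded by $\tfrac{4C_{2}}{(\log\rho)^{2}}\sum_{m\geq m_0}m^{-2}<+\infty$; adding the two pieces gives the asserted summability. The identical computation, with Proposition~\ref{PDAV2} in place of Proposition~\ref{PDAV1}, handles the Liouville function $\lamob$. I do not expect any genuine obstacle here beyond the observation already made — that Davenport's estimate must be used with an exponent strictly bigger than $1$ so that $\sum_m\log([\rho^m])^{-\varepsilon}$ converges — the rest being exactly the bookkeeping carried out in the proof of Corollary~\ref{Main-P}.
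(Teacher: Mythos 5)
Your proof is correct, and it follows the same basic route as the paper (Bourgain's Fourier-transfer trick, Parseval--Bessel, Cauchy--Schwarz, then Davenport at the lacunary scale $N=[\rho^m]$). However, you have in fact done something the paper's own derivation does not do cleanly, and your extra care is necessary. The paper passes from the Factor
$$\frac1{N}\sum_{n=1}^{N}\Big|\frac1{N}\sum_{k=1}^{N}\bmu(k)\bmu(k+n)\Big|\leq\frac{C}{\log(N)^{\varepsilon}},\qquad \varepsilon<1,$$
to the bound $C\rho^{-m\varepsilon}$ at $N=[\rho^m]$, and sums the resulting geometric series. But $\log([\rho^m])^{\varepsilon}$ is comparable to $(m\log\rho)^{\varepsilon}$, not to $\rho^{m\varepsilon}$, so the correct specialization of the Factor gives only $C'(\log\rho)^{-\varepsilon}m^{-\varepsilon}$, and with the stated restriction $\varepsilon<1$ the series $\sum_m m^{-\varepsilon}$ diverges. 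Your observation that one must rerun the argument with Davenport's estimate at an exponent $A>1$ --- which is available for $\bmu$ (Proposition \ref{PDAV1}) and for $\lamob$ (Proposition \ref{PDAV2}), since those estimates hold for \emph{every} power of the logarithm --- is exactly the repair needed; with $A=2$ the tail is dominated by $\sum_m m^{-2}$ and the finitely many initial terms are trivially bounded by $1$. So your write-up is not merely equivalent bookkeeping: it closes a genuine gap in the paper's deduction. The one caveat worth recording is that this corollary is therefore special to multiplicative functions whose exponential sums decay faster than any fixed power of $\log N$; it does not follow from the Daboussi--Delange condition with a single logarithmic exponent $\kappa<1$ as used elsewhere in the paper.
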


%\todoH{I add a remark about self correlations of $\bmu$ and Sarnak's conjecture}

\begin{rem} It is shown in \cite{al-Diser} that if Sarnak's conjecture holds with some technical assumption then the self-corrections of $\bmu$ satisfy
$$\frac{1}{2N}\sum_{n=-N}^{N}\bmu(n)\bmu(n+k) \tend{N}{+\infty} 0,$$
for any $k<0$ \footnote{We extend the definition of $\bmu$ to the negative integer in usual fashion.}.
\end{rem}

\subsection{The case $k=3$.}\label{sub4-3}
In this subsection we give the proof of Theorem \ref{main-2} for $k=3$.
%our second main result in the case of cubic
%averages of order 3 with at least one of the map has a discrete spectrum.
For simplicity, we shall prove the following
\begin{eqnarray*}
\Big|\frac1{N^3} \sum_{n,p,m=1}\bnu(n)\bnu(m)\bnu(p)\bnu(n+m)\bnu(n+p)\bnu(m+p)\bnu(n+m+p)\Big| \leq \frac{C}{\log(N)^{\epsilon}},
\end{eqnarray*}
where $C$ and $\epsilon>0$ are positive constants,  and $\bnu$ is a bounded multiplicative function which satisfies the weak Daboussi-Delange condition.\\

%For that notice that our proof in the case 2 gives
%$$\sup_{z_1,z_2,z_3\in \T}\frac1{N^2}\Big|\sum_{n,m=1}\mu(n)\mu(m)\mu(n+m)z_1^n z_2^mz_3^{n+m}\Big| \leq \frac{C}{\log(n)^{\epsilon}}.$$
Notice that by the triangle inequality, we have
$$\Big|\frac1{N^3} \sum_{n,p,m=1}\bnu(n)\bnu(m)\bnu(p)\bnu(n+m)\bnu(n+p)\bnu(m+p)\bnu(n+m+p)\Big|$$
$$
\leq \frac1{N^2}\sum_{n,p=1}^{N} \Big|\frac1{N} \sum_{m=1}^{N}\bnu(n)\bnu(m)\bnu(p)\bnu(n+m)\bnu(n+p)\bnu(m+p)\bnu(n+m+p)\Big|.$$
But, since $\bnu$ is bounded (say by one), we can write
$$\frac1{N^2}\sum_{n,p=1}^{N} \Big|\frac1{N} \sum_{m=1}^{N}\bnu(n)\bnu(m)\bnu(p)\bnu(n+m)\bnu(n+p)\bnu(m+p)\bnu(n+m+p)\Big|$$
$$\leq \frac1{N^2}\sum_{n,p=1}^{N} \Big|\frac1{N} \sum_{m=1}^{N}\bnu(m)\bnu(n+m)\bnu(m+p)\bnu(n+m+p)\Big|.$$
Let $p$ be fixed and apply Cauchy-Schwarz inequality combined with Bourgain observation to get
\begin{eqnarray*}
&&\frac1{N}\sum_{n=1}^{N}\Big|\frac1{N} \sum_{m=1}^{N}\bnu(m)\bnu(n+m)\bnu(m+p)\bnu(n+m+p)\Big|\\
&&\leq \Big(\frac1{N}\sum_{n=1}^{N}\Big|\frac1{N} \sum_{m=1}^{N}\bnu(m)\bnu(n+m)\bnu(m+p)\bnu(n+m+p)\Big|^2\Big)^{\frac12}\\
&&\leq \Big(\frac1{N}\sum_{n=1}^{N}\Big|\widehat{\Psi_{N,p}}(n)\Big|^2\Big)^{\frac12}
\end{eqnarray*}
where
$$\Psi_{N,p}(z)=\Big(\frac1{N} \sum_{m=1}^{N}\bnu(m)z^{-m}\Big) \Big(\sum_{m'=1}^{N}\bnu(m')\bnu(m'+p)z^{m'}\Big).$$
As before, applying Bessel-Parseval inequality, we can write
\begin{eqnarray*}
&&\frac1{N}\sum_{n=1}^{N}\Big|\frac1{N} \sum_{m=1}^{N}\bnu(m)\bnu(n+m)\bnu(m+p)\bnu(n+m+p)\Big|\\
&&\leq \Big(\frac1{N}\int  \Big|\Psi_{N,p}(z)\Big|^2 dz\Big)^{\frac12}.\\
\end{eqnarray*}
Now, by our assumption on $\bnu$, we have
$$\Big|\Psi_{N,p}(z)\Big| \leq \frac{C}{\log(N)^{\epsilon}}\Big|\sum_{m'=1}^{N}\bnu(m')\bnu(m'+p)z^{m'}\Big|.$$
Therefore,
$$\frac1{N}\int  \Big|\Psi_{N,p}(z)\Big|^2 dz \leq \frac{C^2}{\log(N)^{2\epsilon}}.\Big(
\frac1{N}\sum_{m'=1}^{N}\bnu^2(m')\bnu^2(m'+p)\Big).$$
It turns out that for any $p$, one can estimate the quantity
$$\frac1{N}\sum_{m'=1}^{N}\bnu^2(m')\bnu^2(m'+p).$$
for $\bnu \in \{\bmu,\lamob\}$, thanks to Mirsky Theorem. But we don't need such estimation. Here, we observe only that $\bnu^2$ is bounded by one.
Whence, for any $p$,
$$\Big(\frac1{N}\int  \Big|\Psi_{N,p}(z)\Big|^2 dz\Big)^{\frac12}
\leq  \frac{C}{\log(N)^{\epsilon}}.$$
This proves  Corollary \ref{Main-P}. A careful application of our previous machinery allows us to prove Theorem \ref{Main-Speed2},
and the detailed verification is left to the reader.\QEDA

\begin{proof}[{\textbf {Proof of Theorem \ref{Main-Speed1}}}] It follows from subsections \ref{sub4-1} and \ref{sub4-3}.
\end{proof}

\subsection{On the nilsystem case.}\label{sub4.4}    %\label{sec:ns}
In this subsection, we present an alternative proof of our second main result when $k=2$ or $3$,
$\bnu \in \{\bmu,\lamob\}$ and at least one of the dynamical systems is a nilsystem.\\

\begin{proof}[{\textbf {Proof of Corollary \ref{main-C2}}}]Let us assume that $T_1$ is an elementary nilsystem of order $s$, that is, $T_1$ is an ergodic $s$-step nilsystem on $X=G/\Gamma$, where $G$ is a nilpotent Lie group of dimension $s$ and $\Gamma$ is a discrete subgroup. By the density argument, it suffices to prove the theorem for a nilsequence $(f_1(T^nx))$, $x\in X$, $f_1$ is a continuous function on $X$. Now, by Leibman's observation \cite{Leib}, we can embed $G$ into a connected and simply-connected nilpotent Lie group $\hat{G}$ with a cocompact subgroup $\hat{\Gamma}$ such that $X=G/\Gamma$ is isomorphic to a sub-nilmanifold of $\hat{X}=\hat{G}/\hat{\Gamma},$ with all translations from $G$ represented in $\hat{G}$. Furthermore, by Tietze-Uryshon extension theorem \cite[p.48]{Dudley}, we can extend $f_1$ to $\hat{X}$. Hence, we are reduced to prove our main result for the nilsystems on $\hat{X}$. \\

Analyzing the proof given in subsection \ref{sub4-1}, we need to estimate
$$\sup_{z \in \T} \Big|\frac1{N}\sum_{n=1}^{N} \bmu(n) f_1(T^nx) z^{-n}\Big|.$$
Again, by the density argument, we may assume that $f_1$ is Lipschitz. We further notice that the sequence $(a_nz^{-n})_n$ can viewed as a nilsequence on
$Y=\hat{G}/\hat{\Gamma} \times \R/\T$. But the group $\hat{G}\times \R$ is connected and simply-connected, and the function
$F_1(x,z)=f_1(x)z^{-1}$ is Lipschitz. Then, we can apply Green-Tao's Theorem (Theorem 1.1 in \cite{Green-Tao}) for a given filtration $(H_n)$ of $\hat{G}\times \R$ of length $m \geq 1$. This gives,
$$\sup_{z \in \T} \Big|\frac1{N}\sum_{n=1}^{N} \bmu(n) f_1(T^nx) z^{-n}\Big| \leq C \frac{1+\|f_1\|_{L}}{\log^{A}(N)},$$
For any $A>0$, uniformly on $x$ and $z$. Letting $N$ go to infinity, we get
\begin{eqnarray}\label{important}
\sup_{z \in \T} \Big|\frac1{N}\sum_{n=1}^{N} \bmu(n) f_1(T^nx) z^{-n}\Big| \tend{N}{+\infty}0.
\end{eqnarray}
Whence,
$$\Big|\frac1{N^2}\sum_{n,m=1}^{N}\bmu(n)\bmu(m)\bmu(n+m)
f_1(T_1^nx) f_2(T_2^mx)f_3(T_3^{n+m}x)\Big| \tend{N}{+\infty}0. $$
by \eqref{bo4},  which ends the proof of the theorem for the case $k=2$, the rest of the proof is left to the
reader.
\end{proof}

Note that by (\ref{important}), we have proved the following popular and well-known result.
%{\todoH{Please the sentence 'Note that by etc' seems to me that it sound better.}}
\begin{Prop}\label{Mobius-Nil} Sarnak's conjecture holds for any nilsystem.
\end{Prop}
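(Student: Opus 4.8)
The plan is to deduce Proposition~\ref{Mobius-Nil} directly from the uniform estimate \eqref{important} established in the proof of Corollary~\ref{main-C2}, reducing the general nilsystem case to the elementary nilsystem case already handled. First I would recall the statement to be proved: for any nilsystem $(X,T)$, any $f \in C(X)$ and any $x \in X$, one has $\frac1N\sum_{n=1}^N f(T^nx)\bmu(n) \to 0$. I would start by observing that it suffices to treat the case where $T$ acts ergodically: a general nilsystem decomposes into a (at most countable, in fact finite-index closed) union of its ergodic components, each of which is again a nilmanifold on which the translation is ergodic, so the general case follows by restricting $f$ to each component and taking finitely many limits. Alternatively one invokes the well-known fact that on $G/\Gamma$ the orbit closure of any point under $T_g$ is itself a sub-nilmanifold on which $T_g$ acts minimally and uniquely ergodically (Leibman, Green--Tao), so without loss of generality we may assume $T$ is ergodic and even uniquely ergodic.

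Next I would reduce the regularity of the test function: by the Stone--Weierstrass argument recorded in Section~\ref{Sec:dt}, the Lipschitz functions $\mathcal{L}(X,d_{\mathcal X})$ are dense in $C(X)$ for the uniform norm, and since $\|\frac1N\sum_{n=1}^N(f-\phi)(T^nx)\bmu(n)\| \le \|f-\phi\|_\infty$ uniformly in $N$ and $x$, it is enough to prove the conjecture for Lipschitz $f$. Then, following exactly the embedding step used in the proof of Corollary~\ref{main-C2}, I would invoke Leibman's observation \cite{Leib} to embed $G$ into a connected, simply-connected nilpotent Lie group $\hat G$ with cocompact $\hat\Gamma$ so that $X$ sits inside $\hat X=\hat G/\hat\Gamma$ with all translations represented, and use Tietze--Urysohn (or the Kirszbraun--McShane extension for Lipschitz functions, \cite[p.146]{Dudley}) to extend $f$ to a Lipschitz function on $\hat X$. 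At this point the sequence $n\mapsto f(T^nx)$ is a basic $m$-step nilsequence on $\hat X$, i.e.\ of the form $F(g(n)\hat\Gamma)$ for a polynomial sequence $g\in\textrm{poly}(\Z,(G_p))$ and a Lipschitz $F$.

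Then I would apply Green--Tao's theorem (Proposition~\ref{Green-Tao-th}) directly: for any $A>0$,
$$\Big|\frac1N\sum_{n=1}^N \bmu(n) F(g(n)\hat\Gamma)\Big| \le C\,\frac{1+\|F\|_L}{\log^A N},$$
with $C$ depending only on the data $m,l,A,Q$ of the nilmanifold. In particular this is the special case $z=1$ of \eqref{important}, so letting $N\to\infty$ gives $\frac1N\sum_{n=1}^N f(T^nx)\bmu(n) \to 0$, uniformly in $x$. Combining the density reduction and the ergodic-component reduction, this establishes \eqref{sar} for every nilsystem, proving Proposition~\ref{Mobius-Nil}.

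I do not expect a genuine obstacle here, since the proposition is, as the authors note, a known consequence of Green--Tao and all the heavy lifting is in Proposition~\ref{Green-Tao-th}; the only points requiring minor care are the reduction from an arbitrary nilsystem (where $T$ need not be ergodic and $G$ need not be connected or simply-connected) to the ergodic, connected, simply-connected setting in which Proposition~\ref{Green-Tao-th} is stated, and the verification that the polynomial orbit $n\mapsto g(n)$ lies in $\textrm{poly}(\Z,(G_p))$ for the relevant filtration. Both are handled exactly as in the proof of Corollary~\ref{main-C2} above, and indeed the authors essentially observe that \eqref{important} already contains the proposition.
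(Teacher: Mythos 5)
Your proof is correct and follows essentially the same route as the paper: the paper derives Proposition~\ref{Mobius-Nil} as an immediate consequence of the uniform estimate \eqref{important}, which is itself obtained by exactly the chain of reductions you describe (density to Lipschitz functions, Leibman's embedding into a connected simply-connected nilmanifold, Tietze--Urysohn extension, then Green--Tao's theorem, with your case being the specialization $z=1$). The extra care you take over the non-ergodic case and over the polynomial sequence being adapted to the filtration is reasonable but does not change the argument.
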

%{\todoH{ I modified the title of the subsequence in order to make on the surface that the proof is simple and furthermore we get a Wiener-Wintner extension of Bourgain-Sarnak-Ziegler criterion.}}

\begin{que}A natural problem suggested by our result is the following: Assume that $T$ satisfies Sarnak's conjecture,
 do we have for any continuous function, for all $x \in X$,
\[
\sup_{t}\Big|\frac1{N}\sum_{n=1}^{N}\bmu(n)f(T^nx) e^{2 \pi i n t }\Big| \tend{N}{+\infty}0? \textrm{~~(WWS)~~~}
\]
 Notice that the topological entropy of the cartesian product of two dynamical flow on compact set is the sum of their topological entropy \cite{GoodW}.\\
\end{que}

\subsection{On the cubic average with Mangoldt weight}
The study of the correlations of von Mangoldt function  is of great importance in number theory, since it is related to the
famous old conjecture
of the twin numbers and more generaly to Hard-Littlewood k-tuple conjecture. It is also related to Riemann hypothesis and the Goldbach
conjectures. Here we will establish the following.
\begin{Th}\label{main-Mangoldt}The cubic nonconventional ergodic averages of any order with \linebreak von Mangoldt function
weight converge almost surely provided that the systems are nilsystems.
\end{Th}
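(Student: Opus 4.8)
The plan is to strip the von Mangoldt weight by the $W$-trick, thereby reducing the weighted average to an \emph{unweighted} cubic nonconventional average along arithmetic progressions — which converges because the systems are nilsystems, by Assani's theorem \cite{Assani1,Assani2} (and \cite{chu-nikos} for arbitrary order, or equivalently Leibman's equidistribution of polynomial nilorbits) — together with an error term that is negligible because $\vMan$ minus its $W$-tricked model has small Gowers norms and correlates trivially with nilsequences; the quantitative form of this last fact is what is drawn from Ford--Green--Konyagin--Tao \cite{FGKtao}, with the several-parameter bookkeeping organised as in \cite{Assani1}.

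First I would normalise. By the density argument at the end of Section~\ref{sec:ms} (Birkhoff together with Stone--Weierstrass) it suffices to treat Lipschitz $f_{\boldsymbol e}$; by Leibman's embedding \cite{Leib} each nilsystem $T_{\boldsymbol e}$ may be taken on a nilmanifold $G_{\boldsymbol e}/\Gamma_{\boldsymbol e}$ with $G_{\boldsymbol e}$ connected and simply connected; and, decomposing an ergodic nilsystem into its minimal, hence uniquely ergodic, components, it is enough to establish convergence of the averages for a fixed $x$ in such a component, which yields the almost sure statement. For such $x$ the function
$$\Phi(\boldsymbol n)\egdef\prod_{\boldsymbol e\in C^*}f_{\boldsymbol e}\big(T_{\boldsymbol e}^{\boldsymbol n\cdot\boldsymbol e}x\big),\qquad \boldsymbol n\in\Z^k,$$
is a $k$-parameter polynomial nilsequence: for each $\boldsymbol e$ the orbit $n\mapsto T_{\boldsymbol e}^{n}x$ is a polynomial orbit and $\boldsymbol n\mapsto\boldsymbol n\cdot\boldsymbol e$ is linear, so on the product nilmanifold $\widetilde X=\prod_{\boldsymbol e\in C^*}G_{\boldsymbol e}/\Gamma_{\boldsymbol e}$ one has $\Phi(\boldsymbol n)=\widetilde F(\widetilde g(\boldsymbol n)\widetilde\Gamma)$ for a polynomial sequence $\widetilde g$ adapted to a suitable filtration and a Lipschitz $\widetilde F=\bigotimes_{\boldsymbol e}f_{\boldsymbol e}$. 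The linear forms $\boldsymbol n\mapsto\boldsymbol n\cdot\boldsymbol e$, $\boldsymbol e\in C^*$, have coefficients in $\{0,1\}$ and are therefore pairwise non-proportional, so $\{\boldsymbol n\cdot\boldsymbol e:\boldsymbol e\in C^*\}$ is a system of linear forms of finite complexity.

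Next I would carry out the $W$-trick. Put $W=\prod_{p\le w}p$ with $w=w(N)\to\infty$ slowly, and write, on each admissible residue class, $\vMan(m)=\tfrac{W}{\phi(W)}\1_{(m,W)=1}+E_W(m)$, where $E_W$ has $U^{s}$-Gowers norm $o_{w\to\infty}(1)$ on $[1,N]$ for every $s$ (Green--Tao--Ziegler \cite{Green-Tao-Z}). Substituting this for each factor $\vMan(\boldsymbol n\cdot\boldsymbol e)$ and expanding the product, the term in which every factor is the model equals $(\tfrac{W}{\phi(W)})^{|C^*|}$ times the average of $\Phi$ over those $\boldsymbol n\in[1,N]^k$ lying in the finitely many admissible classes modulo $W$; for fixed $w$ this is a finite linear combination of genuine cubic nonconventional averages for the systems $T_{\boldsymbol e}^{W}$ with shifts, hence converges as $N\to\infty$ by \cite{Assani1,Assani2,chu-nikos}, and letting $w\to\infty$ the Euler-product factors $(\tfrac{W}{\phi(W)})^{|C^*|}\cdot\#\{\text{admissible classes}\}/W^{k}$ converge to the singular series $\mathfrak S$ of the form system, so the whole main term has a limit. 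Every remaining term contains a factor $E_W(\boldsymbol n\cdot\boldsymbol e)$; since $\Phi$ is a nilsequence, the Gowers-norm inverse theorem combined with the equidistribution of $\widetilde g$ (via the Green--Tao factorisation theorem) forces such terms to be $o_{w\to\infty}(1)+o_{N\to\infty}(1)$, which is exactly the quantitative estimate of $\vMan$ against nilsequences supplied by \cite{FGKtao}. Hence the weighted average converges, with limit $\mathfrak S$ times the Haar integral of $\widetilde F$ over the orbit closure of $\widetilde g$ (which for $k\ge 3$ is typically $0$, for parity reasons).

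The hard part will be the uniformity in $w$. One needs the $U^{s}$-bound on $E_W$ to dominate both the Lipschitz data of $\widetilde F$ and the complexity appearing in the factorisation of $\widetilde g$ \emph{after} that factorisation has itself been refined modulo $W$, and one must justify interchanging $\lim_{N\to\infty}$ with $\lim_{w\to\infty}$ in both the main and the error terms; this simultaneous bookkeeping — the $W$-trick, the nilorbit factorisation on each of the $W^{k}$ sub-progressions, and all $|C^*|=2^{k}-1$ linear forms at once — together with the resulting singular-series evaluation is precisely the technical content of \cite{FGKtao}, which is why the theorem is phrased as a consequence of that work.
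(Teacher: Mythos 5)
Your proposal follows essentially the same route as the paper's own proof: the $W$-trick decomposition of $\vMan$, the Green--Tao orthogonality of $\Lambda'_{b,W}-1$ to nilsequences (equivalently, the smallness of its Gowers norms plus the inverse theorem) to dispose of every term containing an error factor, and the known unweighted pointwise convergence of cubic averages to handle the main term -- which is exactly how the paper reduces matters to the quantity it calls $(NCSM)$. The only discrepancy is your parenthetical guess that the limit is ``typically $0$ for parity reasons,'' which is neither needed nor claimed by the theorem; otherwise your write-up is, if anything, more explicit about the telescoping and the interchange of the $N$ and $w$ limits than the paper is.
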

The proof of Theorem \ref{main-Mangoldt} is largely inspired from Ford-Green-Konyagin-Tao's proof of the following theorem \cite{FGKtao}. It is used also some
elementary fact on Gowers uniformity semi-norms.
\begin{Prop}[Ford-Green-Konyagin-Tao \cite{FGKtao}]\label{FGKT}The Gowers norm of von Mangoldt function is positive.
Precisely, for any $d \geq 1$,
$$\frac1{N^d}\sum_{\vec{n} \in[1,N]^d}\prod_{e \in C^*}\Lambda(\vec{n}.e)\tend{N}{+\infty}\prod_{p}\beta_p,$$
where
$$\beta_p=\frac1{p^d}\sum_{\vec{n} \in (\Z/p\Z)^d}\prod_{e \in C^*}\Lambda_{\Z/p\Z}(\vec{n}.e).$$
The function $\Lambda_{\Z/p\Z}$ is the local von Mangoldt function, that is, the $p$-periodic function defined setting
$\Lambda_{\Z/p\Z}(b)=\frac{p}{p-1}$ when $b$ is coprime to $p$ and $\Lambda_{\Z/p\Z}(b)=0$ otherwise.
\end{Prop}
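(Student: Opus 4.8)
The plan is to recognise the left-hand sum as a count of the von Mangoldt function along a finite system of linear forms and to invoke the Green--Tao ``linear equations in primes'' machinery together with the inverse theorem for Gowers norms (Proposition \ref{DITGN}). First I would set $\psi_e(\vec{n}) = \vec{n}.e = \sum_{i=1}^{d} e_i n_i$ for $e \in C^* = \{0,1\}^d \setminus \{(0,\dots,0)\}$, so that the sum becomes $\frac1{N^d}\sum_{\vec{n}\in[1,N]^d}\prod_{e\in C^*}\Lambda(\psi_e(\vec{n}))$, a count over the $t = 2^d-1$ forms $\{\psi_e\}$ in $d$ variables. The crucial structural observation is that this system has finite Cauchy--Schwarz complexity: two distinct $0$--$1$ vectors $e\ne e'$ can never be proportional, so no two forms are rational multiples of one another, and a direct check of Green--Tao's partition criterion shows the complexity equals $d-1$, so that the system is governed by the Gowers $U^{d}$ norm. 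Since the summation region is the full cube $[1,N]^d$ and we normalise by its volume $N^d$, the archimedean density equals $1$ and does not appear in the final answer.

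Next I would perform the standard $W$-trick: fix $w$, set $W=\prod_{p\le w}p$, split each variable into residues $b_i \bmod W$, and replace $\Lambda$ by its restriction to progressions coprime to $W$, suitably renormalised. This yields a splitting $\Lambda = \Lambda^\sharp + \Lambda^\flat$, where $\Lambda^\sharp$ carries the arithmetic mean along progressions (responsible for the singular series) and $\Lambda^\flat$ is the uniform part. By the generalised von Neumann theorem for systems of complexity $d-1$ — several applications of Cauchy--Schwarz exploiting that the forms are pairwise non-proportional — the contribution to the sum of any factor in which $\Lambda$ is replaced by $\Lambda^\flat$ is bounded, up to a constant, by $\|\Lambda^\flat\|_{U^{d}[N]}$, hence is $o(1)$ once the uniformity of $\Lambda^\flat$ is established.

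The hard part will be establishing that $\Lambda^\flat$ has negligible $U^{d}[N]$ norm after the $W$-trick; this is exactly the content of the Green--Tao and Green--Tao--Ziegler program. Concretely, one combines the inverse theorem (Proposition \ref{DITGN}), which says that a non-negligible $U^{d}$ norm forces correlation with a bounded-complexity nilsequence, with the Green--Tao orthogonality of $\Lambda^\flat$ to all nilsequences (the ``Möbius and nilsequences'' estimate, of the same flavour as Proposition \ref{Green-Tao-th}); together these force $\|\Lambda^\flat\|_{U^{d}[N]}=o(1)$. For $d\ge 4$ there is no elementary circle-method substitute for this step, and it is where essentially all the analytic difficulty resides.

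Finally I would assemble the main term coming from $\Lambda^\sharp$. Summing the arithmetic densities over the residues $b_i\bmod W$ and letting $w\to\infty$ produces, by the usual local-to-global factorisation, an absolutely convergent Euler product $\prod_p \beta_p$, where each local factor is precisely the density of $\vec{n}\bmod p$ for which every $\psi_e(\vec{n})$ is coprime to $p$, weighted by $(p/(p-1))^{t}$ — that is, $\beta_p = \frac1{p^d}\sum_{\vec{n}\in(\Z/p\Z)^d}\prod_{e\in C^*}\Lambda_{\Z/p\Z}(\vec{n}.e)$, matching the statement. Since $\beta_p = 1 + O(1/p^2)$ for large $p$ and each $\beta_p$ is visibly positive (no congruence obstruction forces the product to vanish), the product converges to a strictly positive limit, which simultaneously identifies the limit as $\prod_p\beta_p$ and proves that the Gowers norm of $\Lambda$ is positive.
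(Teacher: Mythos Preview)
Your proposal is correct and follows exactly the approach the paper indicates: the paper does not give its own proof of this proposition but cites it from \cite{FGKtao}, noting only that the fundamental ingredients are the M\"{o}bius orthogonality to nilsequences (Proposition~\ref{Mobius-Nil}), the inverse Gowers theorem (Proposition~\ref{DITGN}), and the $W$-trick. Your sketch expands precisely these ingredients into the standard Green--Tao linear-forms-in-primes argument, so there is nothing to compare beyond noting that you have supplied the details the paper deliberately omits.
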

As it is mentioned in \cite{FGKtao}, the fundamental ingredients in the proof of Proposition \ref{FGKT} are the M\"{o}bius orthogonality to the
nilsequences (Proposition \ref{Mobius-Nil}) combined with
the inverse Gowers theorem (Proposition \ref{DITGN}) and the``W-trick''. %, see \cite{???}.
We will need also the following lemma from \cite{Nikos-Host2}.
\begin{lem}\label{NH}Let $(a_n)$ be a bounded sequence of complex numbers. Then, we have
$$\Big|\frac{1}{\pi(N)}\sum_{\overset{p\textrm{~~prime}}{p \leq N}}a_p-\frac1{N}\sum_{n=1}^{N}\Lambda'(n)a_n\Big|\tend{N}{+\infty}0.$$
\end{lem}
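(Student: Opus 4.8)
Write $A_N=\frac{1}{\pi(N)}\sum_{p\le N}a_p$ and $B_N=\frac1N\sum_{n=1}^N\Lambda'(n)a_n$, and put $M=\sup_n|a_n|$. Recall that $\Lambda'$ is the restriction of the von Mangoldt function to the primes, so $\Lambda'(p)=\log p$ for a prime $p$ and $\Lambda'(n)=0$ otherwise (the proper prime powers contribute $O(\sqrt N\log N)$ to $\sum_{n\le N}\Lambda'(n)a_n$ and may be discarded at the outset). The plan is to notice that both $A_N$ and $B_N$ are weighted sums over the primes $p\le N$: the first assigns each such $p$ the weight $1/\pi(N)$, the second the weight $(\log p)/N$. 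I would therefore begin by writing
$$A_N-B_N=\sum_{p\le N}a_p\Big(\frac{1}{\pi(N)}-\frac{\log p}{N}\Big),$$
and using $|a_p|\le M$ to reduce the lemma to the purely number-theoretic estimate
$$\sum_{p\le N}\Big|\frac{1}{\pi(N)}-\frac{\log p}{N}\Big|\tend{N}{+\infty}0.$$
For the constant sequence $a_n\equiv 1$ this reads $\theta(N)/N\to1$, with $\theta(N)=\sum_{p\le N}\log p$, so the statement is genuinely of Prime Number Theorem strength and PNT will have to be invoked.

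The guiding idea is that both weightings concentrate on the \emph{large} primes, on which $\log p$ is essentially $\log N$ and the two weights nearly coincide. I would fix $\epsilon>0$ and split the sum at $N^{1-\epsilon}$. For the small primes $p\le N^{1-\epsilon}$ I would bound each term by the sum of the two weights, giving
$$\sum_{p\le N^{1-\epsilon}}\Big|\frac{1}{\pi(N)}-\frac{\log p}{N}\Big|\le\frac{\pi(N^{1-\epsilon})}{\pi(N)}+\frac{\theta(N^{1-\epsilon})}{N}.$$
By Chebyshev's bounds $\pi(x)\asymp x/\log x$ and $\theta(x)=O(x)$, both terms are $O(N^{-\epsilon})$ and tend to $0$; thus the small-prime mass of each weighting disappears in the limit.

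For the remaining range $N^{1-\epsilon}<p\le N$ I would factor out $1/\pi(N)$ and write the generic term as
$$\Big|\frac{1}{\pi(N)}-\frac{\log p}{N}\Big|=\frac{1}{\pi(N)}\Big|1-\frac{\pi(N)\log N}{N}\cdot\frac{\log p}{\log N}\Big|.$$
On this range $\tfrac{\log p}{\log N}\in(1-\epsilon,1]$, while the Prime Number Theorem gives $\tfrac{\pi(N)\log N}{N}\to1$; hence $\big|1-\tfrac{\pi(N)\log p}{N}\big|\le\epsilon+o(1)$ uniformly in $p$. Summing over at most $\pi(N)$ primes yields
$$\sum_{N^{1-\epsilon}<p\le N}\Big|\frac{1}{\pi(N)}-\frac{\log p}{N}\Big|\le\epsilon+o(1).$$
Combining the two ranges gives $\limsup_N\sum_{p\le N}\big|\tfrac{1}{\pi(N)}-\tfrac{\log p}{N}\big|\le\epsilon$, and letting $\epsilon\to0$ closes the argument, so that $|A_N-B_N|\le M(\epsilon+o(1))\to0$.

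The only genuinely delicate point will be the large-prime estimate: it is exactly here that one needs the asymptotic $\pi(N)\log N\sim N$ rather than merely Chebyshev's two-sided bound, and it is the concentration of \emph{both} weightings on primes above $N^{1-\epsilon}$ — where $\log p$ differs from $\log N$ by a factor lying in $(1-\epsilon,1]$ — that forces the two averages together. Everything else reduces to routine Chebyshev-type estimates and the boundedness of $(a_n)$.
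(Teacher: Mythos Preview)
Your argument is correct. The reduction to the purely arithmetic statement
\[
\sum_{p\le N}\Big|\frac{1}{\pi(N)}-\frac{\log p}{N}\Big|\tend{N}{+\infty}0
\]
is the right move, and the split at $N^{1-\epsilon}$ together with Chebyshev's bounds for the small range and the Prime Number Theorem for the large range handles it cleanly. Your remark that the large-prime estimate genuinely requires $\pi(N)\log N\sim N$ (and not merely Chebyshev) is also accurate.

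There is nothing to compare with on the paper's side: the lemma is not proved in the paper but is quoted from Frantzikinakis--Host--Kra \cite{Nikos-Host2}, where it appears as a routine consequence of the Prime Number Theorem via essentially the same mechanism (partial summation or, equivalently, the weight comparison you carried out). Your write-up is a perfectly acceptable self-contained substitute for that citation. One cosmetic point: since $\Lambda'$ is by definition the restriction of $\Lambda$ to the primes, the parenthetical about discarding proper prime powers is unnecessary; you may simply start from $B_N=\frac{1}{N}\sum_{p\le N}(\log p)\,a_p$.
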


\begin{proof}[\textbf{Proof of Theorem \ref{main-Mangoldt}.}] For the case $k=1$. The result holds for any dynamical system. Indeed,
the result follows from Lemma \ref{NH} combined with the main result
from \cite{Wierdl}. Let us assume from now that $k \geq 2$. Then,
as in Ford-Green-Konyagin-Tao's proof, it suffices to see that the following holds
$$\frac1{N^k}\sum_{\vec{n}\in [1,N]^k}\Big(\prod_{e \in C^*}\Lambda'_{b_i,W}(\vec{n}.e)-1\Big) \prod_{e \in C^*}f_e(T_e^{\vec{n}.e}x)), \eqno(NCSM)$$
where $b_i \in [1,W]$, $i=1,\cdots,k$ coprime to $W$.\\
But $\Lambda'_{b_i,W}-1$ is orthogonal to the nilsequences by Green-Tao result. Therefore the limit of the quantity $(CNSM)$ is zero.
\end{proof}
\begin{que}

According to our result Theorem \ref{main-Mangoldt}, we ask if is it true that the cubic nonconventional ergodic averages of any order with
von Mangoldt function weight converge almost surely?
\end{que}

\begin{thank}
The first author would like to express his heartfelt thanks to Professor Benjamin Weiss
for the discussions on the subject. It is a great pleasure also for him to acknowledge the warm
hospitality of University of Science and Technology of China and Fudan University where a part of this work has been done.
We also thank Frantzikinakis for very useful discussion on the subject.
%The second author is supported by NNSF of China 11371339 and 11431012.
\end{thank}

%\appendix

\end{document}